\numberwithin{equation}{section}
\newtheorem{thm}{Theorem}[section]
\newtheorem{lem}[thm]{Lemma}
\newtheorem{prop}[thm]{Proposition}
\newtheorem{defi}[thm]{Definition}
\newtheorem{cor}[thm]{Corollary}
\newtheorem{rmk}[thm]{Remark}
\newtheorem{exm}[thm]{Example}
\newtheorem*{thm*}{Theorem} 
\newtheorem{theorem}{Theorem}
\DeclareMathOperator{\id}{id}
\newcommand{\HH}{\mathcal{H}}     
\newcommand{\HHb}{\mathcal{H}_{\bar{X}}}   
\newcommand{\YY}{\mathcal{Y}}     
\newcommand{\II}{\mathcal{I}}     
\newcommand{\BB}{\mathcal{B}}     
\newcommand{\hh}{\mathfrak{h}}    
\newcommand{\ttt}{\mathfrak{t}}   
\newcommand{\bb}{\mathfrak{b}}    
\newcommand{\PP}{\mathcal{P}}     
\newcommand{\RR}{\mathbb{R}}      
\newcommand{\ZZ}{\mathbb{Z}}      
\newcommand{\NN}{\mathbb{N}}      
\newtheorem{corollary}[theorem]{Corollary}
\newtheorem{lemma}[theorem]{Lemma}
\theoremstyle{definition}
\newtheorem*{conjecture*}{Conjecture}
\def\bar{\overline}
\newcommand*\linenomathpatch[1]{%
  \cspreto{#1}{\linenomath}%
  \cspreto{#1*}{\linenomath}%
  \csappto{end#1}{\endlinenomath}%
  \csappto{end#1*}{\endlinenomath}%
}
\newcommand*\linenomathpatchAMS[1]{%
  \cspreto{#1}{\linenomathAMS}%
  \cspreto{#1*}{\linenomathAMS}%
  \csappto{end#1}{\endlinenomath}%
  \csappto{end#1*}{\endlinenomath}%
}
 \let\linenomathAMS\linenomathWithnumbers
\patchcmd\linenomathAMS{\advance\postdisplaypenalty\linenopenalty}{}{}{}
  \let\linenomathAMS\linenomathNonumbers
\patchcmd{\mmeasure@}{\measuring@true}{
  \measuring@true
  \ifnum-\linenopenaltypar>\interdisplaylinepenalty
    \advance\interdisplaylinepenalty-\linenopenalty
  \fi
  }{}{}
\title{Cocycle superrigidity for median spaces of finite rank}
\author{Biao Ma}
\address{Biao Ma \newline
School of Mathematical Sciences, Key Laboratory of Intelligent Computing and Applications(Ministry of Education), Tongji University, China\\
}
\email{24024@tongji.edu.cn}
\author{Lamine Messaci  }
\address{Lamine Messaci \newline
Laboratoire de recherche AGM, CY Cergy-Paris Université, France\\
}
\email{mmessaci@cyu.fr}
\begin{document} 
\begin{abstract}
We systematically investigate cocycle superrigidity in the setting of finite rank median spaces for product groups and Kazhdan groups. By employing a dynamical approach to superrigidity, we establish, for a median space $X$ of finite rank, the superrigidity of $Isom(X)$-valued cocycles for a product of locally compact second countable groups. In the case of actions by irreducible lattices in such product groups, this approach yields a novel proof of the superrigidity of homomorphisms.
\end{abstract}

\maketitle 

\tableofcontents

\section{Introduction}

Superrigidity phenomena have long served as powerful manifestations of algebraic and dynamical rigidity in group actions. Since Margulis’s foundational work on homomorphism superrigidity for lattices in higher-rank semisimple Lie groups\cite{margulis1991discrete}, and Zimmer’s extension to cocycles into algebraic targets \cite{Zimmer}, the theory has significantly evolved to encompass geometric contexts, especially through actions on spaces with nonpositive curvature. Over the past two decades, cocycle superrigidity theorems have been established for isometry groups of a variety of spaces—including Hilbert spaces, CAT(0) spaces, and Gromov-hyperbolic spaces—using tools from bounded cohomology, harmonic map theory, and boundary theory (see, for instance, \cite{Monod}, \cite{Monod-Shalom}, \cite{Shalom-superrigidity}, \cite{BaderFurman}).\par

This paper investigates cocycle superrigidity for isometry groups of a broad and flexible class of spaces: \textbf{finite rank median spaces}. Median spaces are metric spaces where each triple of points admits a unique median point. Median spaces generalize both real trees and CAT(0) cube complexes and have become central in geometric group theory due to their combinatorial and convexity properties. In particular, they are related to Kazhdan's Property (T) and a-T-menability, and provide a unified framework for analyzing hierarchically hyperbolic groups, coarse median spaces, and asymptotic cones of mapping class groups (see \cite{ChaDH_median}, \cite{ChaD_hyperbolic}, \cite{Fior_tits}, \cite{Fior_median_property}, \cite{Fior_superrigidity}, \cite{Fior_auto}, \cite{Bowd_conv}, \cite{Bowd_median-algebras}, \cite{Genevois}, \cite{Messaci} for different results and properties of the median geometry). Meanwhile, median geometry has also found its applications in understanding Cremona groups \cite{LonjouUrech}, birational groups \cite{CdC}, and computer science \cite{BandeltChepoi}. \par

We prove cocycle superrigidity theorems in this setting by developing a dynamical and boundary-theoretic approach. Unlike previous related works (\cite{Shalom-superrigidity}, \cite{Monod}, \cite{Monod-Shalom}, \cite{Gelander-Karlsson-Margulis} and \cite{Fior_superrigidity}), our strategy builds on ideas introduced by Bader--Furman \cite{BaderFurman} and is based on the nonpositive curvature features of group actions on median spaces of finite rank. In particular, it does not rely on bounded cohomology, nor on the existence of an induced $L^1$-structure.

\subsection{Statement of the results}
\subsubsection{Product groups}
Let $G=G_1\times G_2 \times \cdots \times G_n (n \geq 2)$ be a product of second countable, locally compact, non compact groups, and let $\Omega$ be a standard probability space on which each $G_i$ acts ergodically by measure-preserving action. The reader is referred to Section \ref{section_strongboundaries_cocycles} for cocycles and its non elementarity. A median space is called \textbf{irreducible} if it is not the $\ell^1$-product of two median spaces that are not one point space.
 \begin{theorem}[Theorem \ref{theoerem_superrigidity_cocycle}]\label{thm_intro_cocycles}
 Let $X$ be a complete separable irreducible median space of finite rank and $\alpha:G\times \Omega \rightarrow Isom(X)$ a non-elementary cocycle. 
 Then, there exists an $Isom(X)$-invariant median subspace $Y\subseteq X$ such that the cocycle $\alpha$ is cohomologous to a group homomorphism $h:G\rightarrow Isom(Y)$ that factors through the projection onto one factor $G_i$.
 \end{theorem}\par 
The $Isom(X)$-invariant median subspace $Y$ arising in Theorem \ref{thm_intro_cocycles}, that we refer to as the \textbf{\textit{median core}}, has the property that the restriction of any isometry of $X$ on $Y$ is uniquely determined by its restriction on its boundary. In general, a homomorphism from $G$ to $Isom(Y)$ cannot be lifted to $Isom(X)$ (see Example 4.7 in \cite{Fior_superrigidity}). Meanwhile, the median core $Y$ is in general not convex, hence its existence does not contradict the Roller minimality assumption. In \cite{ChaIF} and \cite{Fior_superrigidity}, the median core appeared implicitly in the proof of the superrigidity theorem. In this paper, we characterize it as the median hull of a particular part of the Roller boundary, called the \textbf{regular boundary}. \par
 
 We note that the superrigidity of cocycles is new even when the target group is the automorphism group of a finite dimensional CAT(0) cube complex:\par 

 \begin{corollary}
    Let $X$ be an irreducible separable CAT(0) cube complex of finite dimension and $\alpha:G\times \Omega \rightarrow Isom(X)$ a non-elementary cocycle. 
 Then, there exists an $Isom(X)$-invariant median subgraph $Y\subseteq X$ such that the cocycle $\alpha$ is cohomologous to a group homomorphism $h:G\rightarrow Isom(Y)$ that factors through the projection onto one factor $G_i$. 
 \end{corollary}

By taking $\Omega = G/\Gamma$ and $\alpha$ to be the returning cocycle for an irreducible lattice in $\Gamma< G$, the standard argument gives the superrigidity of homomorphism of lattices (see \cite{Furstenberg-Bourbaki}). This gives rise to a new argument that does not rely on the non vanishing of a bounded cohomology or induced cohomology class. It also recovers Fioravanti's superrigidity result for actions on median spaces of finite rank \cite{Fior_superrigidity} and extends it to the class of non square-integrable lattices. Recall that a lattice $\Gamma$ in a product $G_1\times G_2 \times \cdots \times G_n$ of locally compact second countable groups is called \textbf{irreducible} if for any $1 \leq i \leq n$, the projection of $\Gamma$ into $\prod_{j \neq i}G_j$ is dense. As a corollary of Theorem \ref{thm_intro_cocycles}, we have 
 \begin{theorem}[Theorem \ref{thm_superrigidity_principal}]\label{thm_intro_homo}
 Let $\Gamma\leq G_1\times G_2 \times \cdots \times G_n (n \geq 2)$ be an irreducible lattice in a product of locally compact second countable groups. If $\Gamma$ acts Roller minimally and Roller non-elementarily on a complete separable irreducible median space of finite rank, there exists an $Isom(X)$-invariant median subspace $Y$ on which the $\Gamma$-action extends to $G$ and factors through one of the $G_i$'s.
 \end{theorem}

We remark that Theorem \ref{thm_intro_homo} remains true for the weaker assumption of irreduciblity of the lattice being the projection into each factor $G_i$ is dense. \par
 As a corollary of Theorem \ref{theoerem_superrigidity_cocycle}, one obtains the cocycle version of \cite[Theorem C]{Fior_superrigidity}. 
\begin{theorem}\label{thm_elementary_cocycyle_product_group}
    Let $X$ be a complete irreducible separable median space of finite rank and let $G:=G_1\times \cdots \times G_n$ where each $G_i$ is a locally compact second countable group such that $G_i/G_i^0$ is either amenable or has Kazhdan's Property (T). Then any cocycle $\alpha:G\times \Omega\rightarrow Isom(X)$ is elementary.
\end{theorem}

 The main step in the proof of Theorem \ref{thm_intro_cocycles} is the proof of the existence and uniqueness of a boundary map from $\Omega \times B$, where $B$ is the $G$-boundary (see Definition \ref{def_G_boundary}), to the regular boundary $\partial_r X$ of the median space $X$, that is the part of the Roller boundary that encodes the tree-like part of the median space. We state its simplified version involved in the proof of Theorem \ref{thm_intro_homo}.
 
 \begin{theorem}[Theorem \ref{thm_image_in_regular_boundary}]
 Let $G$ be a locally compact second countable group acting Roller non elementarily and Roller minimally on an irreducible finite rank median space. Then there exists a unique $G$-equivariant measurable map $\phi:B\rightarrow \partial_r X$.
 \end{theorem}
For results on boundary maps in the median setting, see \cite{ChaIF}, \cite{Fernos}, \cite{Sageev-Nevo} and \cite{Bader-Taller}.
 The regular boundary $\partial_r X$ can be defined as the limit set of a particular set of contracting isometries. These are isometries whose axes of translation skewer through two strongly separated halfspaces, where two halfspaces are said to be strongly separated if there is no halfspace that is transverse to both. The existence of such elements is ensured under the mild assumptions that the median space is irreducible and the isometry group acts Roller non elementarily and Roller minimally (see Definitions \ref{def_Roller_minimal}).

 \par 

To construct the boundary map, we adapt the approach of \cite{ChaIF} to the median setting. Their method relies on halfspaces to associate a “center of mass” to each measure. A key step in their proof involves establishing the measurability of certain maps, which depends on the fact that the space of halfspaces of the cube complex is countable. This is obviously not necessarily true for general median spaces. However, we prove the existence of a countable dense subset of halfspaces that is invariant under the full group of isometries.
\begin{lemma}[Theorem \ref{thm_countable_medians_subspace}]\label{thm_fundamental_family_introduction}
Let $X$ be a complete separable irreducible median space of finite rank. If $Isom(X)$ acts Roller non elementarily and Roller minimally, there exists an $Isom(X)$ invariant countable set of halfspaces that separate any pair of points of the median core.

\end{lemma}

The theorem above is very practical for dealing with the measurability question regarding maps and subsets in the Roller compactification.\par

\subsubsection{Kazhdan groups.}

Product groups include products of simple Lie groups, but they exclude simple Lie groups of higher rank. These Lie groups have Kazhdan's Property (T). In \cite{ChaDH_median}, the authors gave a characterization of Property (T) for lcsc groups, namely, every continuous isometric action has bounded orbits. Hence, some rigidity properties may be expected for cocycles of Kazhdan groups with values in the isometry group of median spaces. In \cite{AdamsSpatzier}, the authors gave a complete study of cocycles of Kazhdan groups taking values in the isometry group of a tree, where they proved in particular that any such cocycle is cohomologous to a cocycle taking values in the stabilizer of a point. Applying the argument of Adams and Spatzier to the median setting, we extend our study of cocycle rigidity to groups satisfying Kazhdan’s Property (T).
\begin{theorem}[Theorem \ref{thm:cocycle-propertyT}]
    Let $X$ be a complete median space, and let $G$ be a locally compact second countable group with Kazhdan's Property (T) that acts ergodically by measure preserving action on $\Omega$. Then for any cocycle $\alpha:G\times\Omega\rightarrow Isom(X)$, there exists an $\alpha$-invariant field of bounded subset, i.e. an $\alpha$-invariant section $f:\Omega\rightarrow Bdd(X)$, such that $\{(\omega,x)\in \Omega\times X\ |\ x\in f(\omega)\}$ is measurable. 
\end{theorem}

A direct consequence of this theorem is
\begin{corollary}\label{intro:cocycle_cube_propertyT}
   Let $G$ be a lcsc group having Kazhdan's Property (T) and $(\Omega,\nu)$ an ergodic $G$-space. Let $X$ be a CAT(0) cube complex and $H \leq Isom(X)$. Then any measurable cocycle $\alpha: G \times \Omega \to H$ is cohomologous to a cocycle in the isotropy group in $H$ of a bounded sub-cube complex of $X$.
\end{corollary}

In the context of finite rank \textbf{connected} median space, we can prove the following fixed point property for Kazhdan group in the cocycle setting.
\begin{theorem}[Theorem \ref{thm_equivariant_cocycle_Kazhdan}]\label{thm_equivariant_cocycle_Kazhdan_introduction}
     Let $X$ be a complete separable connected median space of finite rank and let $G$ be a lcsc group with Kazhdan's Property (T) that acts ergodically by measure preserving action on $(\Omega,\mu)$. Then for any cocycle $\alpha:G\times\Omega\rightarrow Isom(X)$ there exists an $\alpha$-equivariant section $f:\Omega\rightarrow X$. \par 
     In particular, $\alpha$ is elementary.
\end{theorem}
If the cocycle $\alpha$ is $L^1$-integrable, that is, for any $g\in G$ the integral $$\int_\Omega d(x_0,\alpha(g,\omega)(x_0)) \ d\mu(\omega)$$ is finite for some (any) fixed $x_0\in X$, then the group $G$ acts isometrically on the space of $L^1$-integrable sections class from $\Omega$ into $X$. Hence in this case, Theorem \ref{thm_equivariant_cocycle_Kazhdan_introduction} follows directly from the median characterization of the Kazhdan property (T) (\cite{ChaDH_median}). \par
In the general case, the cocycle $\alpha$ induces a $G$-action on the space of equivalence classes of measurable sections from $\Omega$ into $X$, which in general carries only the structure of a median algebra, rather than a median metric.



\color{black}
\subsection{Median spaces of infinite rank.}
Many interesting examples of median spaces are not of finite rank such as the canonical median spaces associated to real and complex hyperbolic spaces constructed in \cite{ChaD_hyperbolic}. Other interesting classes of infinite rank median spaces are given by infinite-dimensional CAT(0) cube complexes (median graph of infinite rank). In \cite{LonjouUrech}, the authors constructed  various infinite-dimensional CAT(0) cube complexes on which Cremona groups act by isometries (see also \cite{Genevois-Lonjou-Urech}). In \cite{Osajda}, the author proved that various classes of free Burnside groups do not have Kazhdan's Property (T) by constructing infinite-dimensional CAT(0) cube complexes on which they act properly. Theorem \ref{thm_elementary_cocycyle_product_group} no longer holds when the finite rank assumption is dropped as one may consider the action of a product of $\mathrm{SO}(n,1)$ on the product of the canonical median spaces associated to $\mathbb{H}^n$ (see \cite{ChaD_hyperbolic}). An important distinction is that, for infinite rank median spaces, the isometry group is not necessarily totally disconnected, whereas it is the case for "generic case" for median spaces of finite rank.
\par 
\color{black}
Regarding the superrigidity of cocycles for product groups, the dynamics approach taking here indicates that the same result should conjecturally hold for infinite rank median spaces, at least for the subcategory of median spaces admitting a nice topological separation by halfspaces (such as these in \cite{ChaD_hyperbolic} and infinite rank median graph). The proof we provided in this paper relies on the finite rank assumption, particularly for the existence of strongly separated halfspaces, which gives rise to an abundant family of contracting elements in $Isom(X)$. In finite rank median space, strongly separated halfspace exists as long as the space is irreducible. This is no longer true in the infinite rank case, where strongly separated halfspaces do not necessarily exist even under irreducibility condition, consider, for instance, the canonical median space associated to the $n$-dimensional hyperbolic space \cite{ChaD_hyperbolic}.\par 

One interest in extending the superrigidity result to the infinite rank case would be to establish the property FW for the remaining cases of higher rank irreducible lattice. The conjecture was stated for irreducible lattices in connected semisimple Lie group of higher rank with no compact factors (see \cite[Conjecture 1.9]{Cornulier}), and it is established in the case where one of the factors has Kazhdan's Property (T) (see \cite[Theorem 1.12]{Cornulier}). For other examples of lattices verifying property FW (see \cite[section 1.2]{Cornulier-2}).

    

\subsection{Outline of the proof}
As mentioned above, Theorem \ref{thm_intro_homo} is a corollary of Theorem \ref{thm_intro_cocycles}. However, the paper is organized to first prove Theorem \ref{thm_intro_homo} and then Theorem \ref{thm_intro_cocycles} by pointing out minor changes. It has the advantage that no background on cocycles is needed to understand the main ideas. We now present an outline of their proofs. For simplicity, in this paper we only deal with products of two groups, and the general version can be easily completed by induction.

\subsubsection{Superrigidity of homomorphisms (Section \ref{section_superrigidity_of_homomorphisms}).}

Let $\Gamma\leq G_1\times G_2$ be an irreducible lattice in a product of locally compact second countable groups and let $X$ be a complete, irreducible, separable median space of finite rank. Assume that $\Gamma$ acts Roller non elementarily and Roller minimally on $X$. For a fixed $G$-boundary $B$, the first step is to construct a $G$-equivariant map $\phi:B\rightarrow \bar X$ (Theorem \ref{thm_contruction_boundary_map}). To do so, one first starts from the $G$-equivariant map $\Phi:B\rightarrow \PP(\bar X)$, where $\PP(\bar X)$ is the space of probability measures over the Roller compactification $\bar X$. Such $G$-equivariant map is ensured by the amenability of the action $G\curvearrowright B$. For each probability measure $\mu\in \PP(\bar X)$ one associates a center of mass by considering the intersection of all closed halfspaces having measure bigger than $\frac{1}{2}$ (see Section \ref{section_barycenter_map}). Using the ergodicity of the diagonal $G$-action on $B\times B$, one deduces that for almost all $b\in B$, the center of mass of the corresponding measure is a singleton (Proposition \ref{prop_balanced_halfspaces_zero_measure}). Thus, the map $\Phi$ descends to a measurable map $\phi: B\rightarrow \bar X$.\par 
The second step consists of showing that the map $\phi$ has an image essentially in the regular boundary (Theorem \ref{thm_image_in_regular_boundary}). To do so, one uses again the double ergodicty of the action $G\curvearrowright B\times B$ to deduce that for almost all pair $b_1,b_2$ their images under $\phi$ are separated by an arbitrarily large set of strongly separated hyperplanes. The regular boundary has the property that for any triple of pairwise distinct points, their median point lies in $X$.\par 
 Having these data in hand and borrowing an argument from Bader-Furman \cite{BaderFurman}, one shows that any $G$-equivariant map from $B\times B$ into $\partial_r X$ depends on one variable and factors through $\phi$ (Theorem \ref{thm_unicity_of_boundary_map_product}). Restricting to the situation where the group is an irreducible lattice in a product of locally compact second countable groups, one has $B=B_1\times B_2$ where each $B_i$ is a $G_i$-boundary. In this case, the product $B\times B$ contains a non ``trivial" automorphism that commute with the action (the generalized Weyl group introduced in \cite{Bader-Furman-Weyl_group} is bigger than $\ZZ/2\ZZ$). Combining this with the proven fact that any map from $B\times B$ to $\partial_r X$ depends on one variable, this imposes the boundary map $\phi$ to depend only on one factor $B_i$ (Theorem \ref{thm_factor_boundary_map}). Hence, one obtains a $G_i$-action on $\phi(B)\subseteq \partial_r X$ extending the $\Gamma$-action. The last step is to show that the $G_i$-action extends continuously to the median closure of $\phi(b)$ inside $X$, using the median ternary operation and the density of the projection of $\Gamma$ in $G_i$ (Theorem \ref{thm_superrigidity_principal}).

 \subsubsection{Superrigidity of cocycles (Section \ref{section_superrigidity_of_cocycles}).}

 Let $G=G_1\times G_2$ be a product of second countable locally compact groups, and let $\Omega$ be a standard probability space on which each $G_i$ acts ergodically with measure-preserving transformations. Let $X$ be a complete separable irreducible median space of finite rank and let $$\alpha:G\times \Omega \rightarrow Isom(X) $$ be a non elementary cocycle (see Section \ref{section_strongboundaries_cocycles}). The cocycle defines a natural $G$-action on the space of classes of measurable sections from $\Omega$ to the median core $Y$. This space inherits a natural structure of a median algebra, where the median operation is defined pointwise.\par  
 The goal, inspired by \cite{bader2006superrigidity}, is to use boundary theory along with the commutativity between the factors $G_1$ and $G_2$, to prove the existence of a $G$-invariant median subalgebra $\mathcal{Y}$ which, when endowed with the $L^1$-metric, is a median space isometric to $Y$. Moreover, this median subalgebra $\YY$ is such that one of the factors, say $G_2$ acts trivially on it. \par 
 This median subspace consists of classes of $\alpha_{G_2}=\alpha|_{G_2}$-equivariant sections from $\Omega$ to $Y$. To prove the existence of such $\alpha_{G_2}$-equivariant sections, we start with an $\alpha$-equivariant section $\Phi:\Omega\times B\rightarrow \PP(\bar X)$, ensured by the amenability of the action $G\curvearrowright \Omega\times B$. Adapting the same arguments used in the superrigidity of homomorphisms to the case of cocycle, we show that the map $\Phi$ descends to an $\alpha$-equivariant map $\phi:\Omega\times B\rightarrow \partial_r X$ that factors through the projection into $\Omega\times B_1$ (Proposition \ref{prop_boundary_map_cocycle}, Proposition \ref{prop_boundary_map_factors}). Almost all fixed $b_1\in B_1$ gives rise to an $\alpha_{G_2}$-equivariant section from $\Omega$ into $\partial_r X$. After composing such sections with the ternary operation, one obtains infinitely many $\alpha_{G_2}$-equivariant sections from $\Omega$ to $Y$, denoted by $\YY$. The final step is then to show that such a 
 subspace is isometric to $Y$ (Theorem \ref{theoerem_superrigidity_cocycle}), using the ergodicity of the $G_i$-actions on $\Omega$.


\subsection{Organization of the paper}
In Section \ref{Section_median_geometry}, we recall definitions and properties of median geometry and prove some results in the separable case that will be used in the sections that follow. In particular, Subsection \ref{subsection_fundamental_family} is devoted to the proof of Lemma \ref{thm_fundamental_family_introduction}.

In Section \ref{section_strongboundaries_cocycles}, we briefly recall generalities on boundary theory and cocycles. We also adapt the notion of non-elementarity of cocycles to the median setting.

In Section \ref{section_barycenter_map}, we construct the barycenter map in the median setting, which assigns to each measure its center of mass, and prove the necessary propositions for its use in the construction of boundary maps (Subsections \ref{subsection_construction_boundary_map} and \ref{subsection_boundary_map_cocycle}).

Section \ref{section_superrigidity_of_homomorphisms} is devoted to the proof of a superrigidity result for actions by irreducible lattices $\Gamma \leq G_1 \times G_2$. Subsection \ref{subsection_construction_boundary_map} focuses on the existence and uniqueness of the boundary map for Roller non-elementary and Roller minimal actions of countable groups.

Section \ref{section_superrigidity_of_cocycles} is devoted to the proof of the superrigidity of cocycles for products of locally compact second countable groups $G_1 \times G_2$. In analogy with Subsection \ref{subsection_construction_boundary_map}, Subsection \ref{subsection_boundary_map_cocycle} focuses on the proof of existence and uniqueness of the boundary map for non-elementary cocycles.\par
In Section \ref{section_Kazhdan_group}, we deal with cocycles for Kazhdan groups.
\subsection*{Acknowledgments.}  
The authors thank Indira Chatterji for her valuable comments on an earlier version of the paper, which helped improve both its substance and clarity. The first-named author is supported by the National Natural Science Foundation of China (Grant No. 12401086).
\section{Geometry of median spaces and compactifications}\label{Section_median_geometry}

\subsection{Median spaces and median algebras}
\begin{defi}[Median spaces]
    A metric space $(X,d)$ is a (metric) median space if, for any $x,y,z\in X$, there exists a unique point $m = m(x,y,z) \in X$, called the median point of $x,y,z\in X$, such that 
    \[
    [x,y]\cap[x,z]\cap [y,z]=\{m\}
    \]
    where, for any two points $a,b$ in $X$, the interval $[a,b]$ between $a$ and $b$ is defined to be
    \[
    [a,b]:=\{c\in X\ | \ d(a,b)=d(a,c)+d(c,b)\}.
    \]
\end{defi}
\par
We provides a collection of examples of median spaces (cf. \cite{ChaDH_median} for more examples).
\begin{exm}
    \begin{itemize}
        \item A real tree is a median space. Moreover, it is the only case where the intervals coincide with the geodesics.
        \item The 1-skeleton of a CAT(0) cube complex endowed with a combinatorial metric is a median graph. By \cite{Chepoi}, any median simplicial graph arises as the 1- skeleton of a CAT(0) cube complex.
        \item The space of $L^1$-integrable functions over a measurable space $(A,\BB,\mu)$ is a median space \cite{ChaDH_median}.
        \item By \cite{Bowd_conv}, any asymptotic cone of a coarse median group admits a natural median structure. Hence, the asymptotic cones of hyperbolic groups, mapping class groups are median spaces.
    \end{itemize}
\end{exm}

As for CAT(0) cube complexes, a key feature of median geometry is the concept of halfspaces. Recall that a subset $A \subseteq X$ in a median space $X$ is called \textbf{convex} if $[a,b] \subseteq A$, whenever $a, b \in A$. 
\begin{defi}
    Let $X$ be a median space. A subset $\hh\in X$ is a \textbf{\textit{halfspace}} if both $\hh$ and $\hh^c$ are convex. 
\end{defi}

We denote by $\HH(X)$ the subset of halfspaces of $X$. Note that the subset $\HH(X)$ is not empty. In fact, the following Separation Theorem implies that there are enough halfspaces in $X$, in the sense that for any two disjoint convex subsets in a median space,  there is a halfspace to separate them.

\begin{thm}[Separation Theorem \cite{Roller}]
    Let $X$ be a median space, then for any two disjoint convex subsets $C_1,C_2\subseteq X$, there exists a halfspace $\hh\in\HH(X)$ such that $C_1\subseteq \hh$ and $C_2\subseteq \hh^c$.
\end{thm}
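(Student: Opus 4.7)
The plan is to obtain the separating halfspace by a Zorn's lemma argument, extracting a maximal pair of disjoint convex sets extending $(C_1,C_2)$, and then showing that maximality forces these two sets to be complementary, so that either one is automatically a halfspace.

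More precisely, I would first consider the poset $\mathcal{P}$ of all pairs $(B_1,B_2)$ of disjoint convex subsets of $X$ with $C_1\subseteq B_1$ and $C_2\subseteq B_2$, ordered componentwise by inclusion. Since convexity is defined pointwise via intervals, an increasing union of convex sets is convex, and two increasing unions of disjoint sets remain disjoint; hence every chain in $\mathcal{P}$ has an upper bound. Zorn's lemma produces a maximal pair $(A_1,A_2)$. I would then aim to prove $A_1\cup A_2 = X$: granted this, setting $\hh := A_1$ gives $\hh^c = A_2$, both convex, so $\hh\in\HH(X)$ is the required halfspace with $C_1\subseteq \hh$ and $C_2\subseteq \hh^c$.

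To establish $A_1\cup A_2 = X$, I would argue by contradiction: suppose $x\in X\setminus(A_1\cup A_2)$. By maximality of $(A_1,A_2)$, the convex hulls $A_i' := \mathrm{conv}(A_i\cup\{x\})$ must each meet the opposite set, so there exist $a_1\in A_1$ and $a_2\in A_2$ together with witnesses $p\in A_2\cap[a,x]$ for some $a\in A_1$ and $q\in A_1\cap[b,x]$ for some $b\in A_2$; here I would use that in a metric median space the convex hull of a convex set together with a point is exactly the union of the intervals from the point to the set, i.e.\ $\mathrm{conv}(A\cup\{x\})=\bigcup_{a\in A}[a,x]$ (this follows by checking that this union is closed under the median operation using the axioms of a median algebra). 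I would then form a median combining $a,b,x$ and the witnesses $p,q$, and use Isbell's distributive identity
\[
m(m(u,v,w),y,z)=m(u,m(v,y,z),m(w,y,z))
\]
together with convexity of $A_1$ (closure under medians of its own points) and of $A_2$ to exhibit a common point of $A_1$ and $A_2$, contradicting disjointness.

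The main obstacle I expect is this last step: the precise choice of median expression and the bookkeeping of which intermediate medians are forced to land in $A_1$ versus $A_2$ is delicate, because every natural candidate (e.g.\ $m(a,b,x)$, $m(p,q,x)$, $m(a,q,b)$) involves the rogue point $x$ or mixes a point of $A_1$ with a point of $A_2$. The trick will be to iterate the distributive identity so as to replace $x$ by medians of $x$ with pairs of points from a single $A_i$, which by convexity of $A_i$ (and by the interval inclusions $p\in[a,x]$, $q\in[b,x]$) collapse into $A_i$; alternatively, one can dualize and apply Zorn directly to maximal convex supersets of $C_1$ disjoint from $C_2$ and prove that the complement is convex by the same median manipulation. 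Once this combinatorial core is settled, the remainder of the proof is purely formal.
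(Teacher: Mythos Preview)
The paper does not supply its own proof of this theorem: it is stated with a citation to Roller and then used throughout as a black box. So there is nothing in the paper to compare your proposal against.

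For what it is worth, your outline is exactly the classical argument (as in Roller's thesis, and going back to Sholander and Nieminen): Zorn's lemma on disjoint convex pairs extending $(C_1,C_2)$, the convex-hull identity $\mathrm{conv}(A\cup\{x\})=\bigcup_{a\in A}[a,x]$ for convex $A$, and then a median computation showing a maximal pair is complementary. The step you flag as the obstacle is genuine but routine once set up; the one-sided variant you mention at the end is in fact the cleanest way through: take $A$ maximal convex with $C_1\subseteq A$ and $A\cap C_2=\emptyset$, assume $x,y\notin A$ but $z\in[x,y]\cap A$, pick witnesses $c\in C_2\cap[a,x]$ and $c'\in C_2\cap[a',y]$ with $a,a'\in A$, and then a short chain of applications of the distributive identity together with convexity of $A$ and of $C_2$ produces a point in $A\cap C_2$. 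So your plan is correct and, once that bookkeeping is written out, reproduces the proof the paper is citing rather than anything the paper itself contributes.
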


\par
In general, median spaces are not uniquely geodesic spaces. However, convex subsets are rigid enough to impose some nice property on the geometry of the space. In particular, they verify the Helly property.
\begin{thm}[Helly's property \cite{Roller}]
    Let $X$ be a median space and $C_1,...,C_n$ be pairwise-intersecting convex subsets in $X$. Then the intersection $\displaystyle{\bigcap_{i=1}^n}C_i$ is not empty.
\end{thm}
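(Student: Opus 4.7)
The plan is to prove Helly's property by induction on $n$, with the critical case being $n=3$, which will be handled directly using the median operation, after which the general case reduces by a standard intersection trick.

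First, the base case $n=2$ is immediate, since the pairwise intersection hypothesis is precisely the statement. Next, I would handle $n=3$, which is the genuinely nontrivial case. Given pairwise-intersecting convex subsets $C_1, C_2, C_3$, I would pick witness points $x_{ij} \in C_i \cap C_j$ for each pair, and then consider their median $m := m(x_{12}, x_{13}, x_{23})$, which exists and is unique by the defining property of a median space. By the very definition of the median, $m \in [x_{12}, x_{13}] \cap [x_{12}, x_{23}] \cap [x_{13}, x_{23}]$. Since $x_{12}, x_{13} \in C_1$ and $C_1$ is convex, the interval $[x_{12}, x_{13}]$ lies in $C_1$, so $m \in C_1$. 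The symmetric arguments give $m \in C_2$ and $m \in C_3$, producing a point in the triple intersection.

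For the inductive step, assume the property holds for any family of at most $n-1$ pairwise-intersecting convex subsets, and let $C_1, \dots, C_n$ be pairwise intersecting. I would define $D_i := C_i \cap C_n$ for $i = 1, \dots, n-1$. Each $D_i$ is convex as an intersection of two convex sets. To verify the induction hypothesis applies to the family $\{D_i\}$, I must check that they are pairwise intersecting, i.e., that $D_i \cap D_j = C_i \cap C_j \cap C_n$ is nonempty for all $i \neq j$. But this is exactly the $n=3$ case applied to the triple $\{C_i, C_j, C_n\}$, which is pairwise intersecting by hypothesis. Therefore the inductive hypothesis yields $\bigcap_{i=1}^{n-1} D_i \neq \emptyset$, and this intersection coincides with $\bigcap_{i=1}^{n} C_i$.

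The only real content of the argument is the $n=3$ case, and the main obstacle there is nothing more than recognizing that the three intervals built from the pairwise witnesses are forced, by convexity, to lie in the relevant $C_i$'s, so that their common median point gives the desired witness of the triple intersection. Everything else is purely formal induction, and no further topological or metric hypothesis on $X$ is required beyond the defining property of a median space and the convexity definition already stated.
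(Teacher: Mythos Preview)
Your proof is correct and is the standard argument for Helly's property in median spaces. The paper does not supply its own proof of this statement; it simply records the result with a citation to Roller, so there is nothing to compare against beyond noting that your argument is precisely the classical one found in that reference.
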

The separation theorem and Helly's property constitute the cornerstone of the median geometry. They are the key ingredient in the construction of a natural compactification (bordification) of the median spaces through halfspaces (see, for instance, \cite{Roller} and \cite{Fior_median_property}). For a subset $A \subset X$, we use both $\mathring A$ and $int(A)$ to denote the interior of A. 

\begin{defi}[Half-space interval]\label{defi_nomenclature_halfspaces}
    Let $X$ be a median space. 
    \begin{itemize}
        \item  For any subsets $A,B\subseteq X$, we set 
    \begin{eqnarray*}
    \HH(A,B)&:=&\{\hh\in\HH(X)\ |\ B\subseteq \hh \ \& \ A\subseteq \hh^c\}  \\
    \mathring \HH(A,B)&:=& \{\hh\in\HH(X)\ |\ B\subseteq  int(\hh) \ \& \ A\subseteq int(\hh^c)\} 
    \end{eqnarray*}
    For singletons we simply write $\HH(x,y)=\HH(\{x\},\{y\})$ and $\mathring\HH(x,y)=\mathring \HH(\{x\},\{y\})$. If $\hh\in\HH(A,B)$ we say that $\hh$ \textbf{\textit{separates}} $B$ from $A$.
         \item   A halfspace $\hh\subseteq X$ is said to be \textbf{\textit{thick}} if $int(\hh) \neq \emptyset$ and $int(\hh^c) \neq \emptyset$.
         \item  Two disjoint halfspaces $\hh_1$ and $\hh_2$ are \textbf{\textit{strongly separated}} if there are no halfspaces transverse to them.
     \end{itemize}
\end{defi}

The set of halfspaces is canonically endowed with a structure of measured space $(\HH(X),\BB,\nu)$ where $\BB$ is the $\sigma$-algebra generated by $\{\HH(x,y)\}_{x,y\in X}$ and the measure $\nu$ verifies a Crofton like formula $\nu(\HH(x,y))=d(x,y)$ (cf. \cite{ChaDH_median} and \cite{Fior_median_property}).

\par
Any median space $X$ comes with a ternary operation $m_X:X^3\rightarrow X$ that associates to each triple of points their median point. When two variables of $m_X$ are fixed, it corresponds exactly to the nearest point projection onto intervals as stated in the following proposition.
\begin{prop}[\cite{ChaDH_median}] 
    Let $X$ be a complete median space, and $C\subseteq X$ a closed convex subset.
    \begin{itemize}
        \item The nearest point projection onto $C$, denoted by $\pi_C$ is a well-defined $1$-Lipschitz map from $X$ to $C$, and satisfies $\pi_C(x)\in[x,c]$ for any $x\in X$ and $c\in C$.
        \item If $C=[a,b]$, the nearest point projection $\pi_C$ is exactly the map $$m(a,b,*): x \mapsto m(a,b,x).$$ Moreover, intervals in median spaces are closed and convex.
    \end{itemize} 
\end{prop}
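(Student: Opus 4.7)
The plan is to establish the assertions in the order: (a) intervals are closed and convex; (b) existence and uniqueness of the projection onto a closed convex $C$; (c) the interval property $\pi_C(x) \in [x, c]$ for $c \in C$; (d) $1$-Lipschitzness of $\pi_C$; (e) the formula $\pi_{[a,b]} = m(a, b, \cdot)$. Closedness of $[a,b]$ in (a) is immediate since it is the preimage of $d(a,b)$ under the continuous map $c \mapsto d(a,c) + d(c,b)$. For convexity, I would invoke the Separation Theorem to obtain the halfspace description $[a,b] = \bigcap\{\hh \in \HH(X) : \{a,b\} \subseteq \hh\}$, which is convex as an intersection of convex halfspaces.

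For (b), uniqueness is a short median argument: if $c_1, c_2 \in C$ both realize $d(x, C)$, then $m(x, c_1, c_2) \in [c_1, c_2] \subseteq C$ by convexity of $C$, while simultaneously lying on $[x, c_1] \cap [x, c_2]$, so $d(x, m(x, c_1, c_2)) \leq d(x, c_i)$, forcing $c_1 = c_2 = m(x, c_1, c_2)$. For existence, I would take a minimizing sequence $(c_n) \subseteq C$ and consider $m_{n,k} := m(x, c_n, c_k) \in C$. Since $m_{n,k} \in [x, c_n]$, one has $d(c_n, m_{n,k}) = d(x, c_n) - d(x, m_{n,k}) \leq d(x, c_n) - d(x, C)$, and similarly for $c_k$; the triangle inequality then shows $(c_n)$ is Cauchy, and completeness of $X$ together with closedness of $C$ yield the limit $\pi_C(x)$. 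Property (c) follows by the same trick: for $c \in C$, the point $m(x, \pi_C(x), c)$ lies in $C \cap [x, \pi_C(x)]$, and minimality of $\pi_C(x)$ forces equality, so $\pi_C(x) \in [x, c]$.

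For (d), with $p = \pi_C(x)$ and $q = \pi_C(y)$, step (c) gives $p \in [x, q]$ and $q \in [y, p]$, i.e.\ $d(x, q) = d(x, p) + d(p, q)$ and $d(y, p) = d(y, q) + d(p, q)$. Adding these and invoking the triangle inequalities $d(x, q) \leq d(x, y) + d(y, q)$, $d(y, p) \leq d(x, y) + d(x, p)$ yields $2 d(p, q) \leq 2 d(x, y)$. For (e), I would use the halfspace description of the median: a halfspace $\hh$ contains $m(a, b, x)$ iff at least two of $\{a, b, x\}$ lie in $\hh$. Thus any $\hh$ separating $m(a, b, x)$ from $x$ must contain both $a$ and $b$, and by convexity of $\hh$ also any $c \in [a, b]$. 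Hence $\HH(x, m(a, b, x)) \subseteq \HH(x, c)$, and the Crofton-type formula $d(x, y) = \nu(\HH(x, y))$ gives $d(x, m(a, b, x)) \leq d(x, c)$, so $m(a, b, x) = \pi_{[a,b]}(x)$ by the uniqueness in (b).

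The principal obstacle is the convexity of intervals in (a): the implication ``$c_1, c_2 \in [a, b] \Rightarrow [c_1, c_2] \subseteq [a, b]$'' is not automatic from the metric definition and genuinely requires the median axioms, accessed cleanly through the halfspace characterization. Once (a) is in hand the remaining steps are routine median-calculus manipulations; the only additional ingredient for (e) is the halfspace/median compatibility together with the Crofton formula, both of which are standard tools in the median toolbox.
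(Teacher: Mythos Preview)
The paper does not give its own proof of this proposition: it is stated with a citation to \cite{ChaDH_median} and no argument is supplied. So there is nothing in the paper to compare your proposal against.

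That said, your argument is correct and is essentially the standard one found in the cited reference. A couple of minor remarks. In part (a), deriving the halfspace description $[a,b]=\bigcap\{\hh:\{a,b\}\subseteq\hh\}$ from the Separation Theorem requires a short extra step you did not spell out: given $x\notin[a,b]$, one separates $x$ from $m:=m(a,b,x)\neq x$ by some halfspace $\hh$, and then checks that $a,b\in\hh$ using $m\in[a,x]\cap[b,x]$ and convexity of $\hh^c$. In part (e) you can avoid the Crofton formula altogether: since $m(a,b,x)\in[a,b]$, property (c) gives $\pi_{[a,b]}(x)\in[x,m(a,b,x)]$, while $m(a,b,x)\in[x,a]$ forces (again by (c) with $c=a$) that $m(a,b,x)\in[x,\pi_{[a,b]}(x)]$ as well, so the two points coincide. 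Either route is fine.
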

Note that the interval between $a,b\in X$ can also be defined by the mean of the median operation as the subset of fixed points under the map $m(a,b,*)$. \par Later in the paper, we will consider a compactification of the median space called the Roller compactification. The latter does not have a canonical median metric, but does come with a very useful canonical structure of median algebra that is defined as follows:


\begin{defi}[Median algebras]
    A median algebra $(M,m)$ is a set $M$ endowed with a ternary operation $m: M \times M \times M \to M $ satisfying:
    \begin{eqnarray*}
        m(x,x,y)  &=& x;\\
        m(x,y,z)  &=& m(y,x,z) = m(x,z,y);\\
  m(m(x,y,z),t,w) &=& m(x,m(y,t,w),m(z,t,w)).
    \end{eqnarray*}
\end{defi}
Note that any metric median space has a natural structure of a median algebra given by the ternary operation (see \cite{Sholander} for the equivalence of the axioms). We remark that the separation theorem and Helly's property come from the structure of median algebra. \par 
The rank of a median algebra reflects the ``dimension" of the median algebra. Recall that a halfspace $\hh$ is \textbf{\textit{transverse}} to a convex subset $C$ if both $\hh \cap C$ and $\hh^{c} \cap C$ are nonempty. We say that two halfspaces $\hh_1,\hh_2\subseteq M$ are \textbf{transverse} if the following intersections are nonempty:
\[
\hh_1\cap\hh_2,\ \hh_1\cap\hh_2^*,\ \hh_1^*\cap\hh_2,\ \hh_1^*\cap\hh_2^*.
\]
A family of pairwise transverse halfspaces gives rise to an embedded $0$-skeleton of an $n$-cube through a median morphim, i.e. a morphism that preserves the median operation. Hence, the transversality of halfspaces gives a natural way to characterize dimension in median algebras.
\begin{defi}[Rank]
Let $n \in \mathbb{N}$. We say that a median algebra $M$ is of \textbf{\textit{rank}} $n$ if $n$ is the maximal number of pairwise transverse halfspaces. If such $n \in \mathbb{N}$ exists, we call $M$ a \textbf{\textit{median algebra of finite rank}}.   
\end{defi}
The reader is referred to \cite{Bowd_conv} for other equivalent definitions of the rank of median algebras. The rank of a median space is defined as the rank of the associated median algebra. Median spaces of finite rank include, for instance, real trees and the 0-skeleton of finite-dimensional CAT(0) cube complexes endowed with the combinatorial metric. 
\par We now collect some facts about median algebras that will be used in the sequel. For median algebras, even if one does not necessarily have a notion of distance, one has an analogue of the nearest-point projection.
\begin{defi}
    Let $M$ be a median algebra. A \textbf{\textit{retract}} $C\subseteq M$ is a convex subset endowed with a retraction $\pi_C: M\rightarrow C$ such that for any $x\in M$ and any $c\in C$,
    \[
    \pi_C(x)\in[x,c].
    \]
\end{defi}

\begin{prop}[Lemma 2.13 \cite{ChaDH_median} and Lemma 2.6 \cite{Fior_median_property}]
    If $X$ is a median space or a compact topological median algebra, then a convex subset is a retract if and only if it is closed.
\end{prop}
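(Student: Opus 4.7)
The plan is to handle each direction by the same strategy in both settings, with the metric supplying the analytic input in the median space case and continuity of the median operation plus compactness playing that role in the topological algebra case. For the implication \emph{retract implies closed}, the key is the defining property $\pi_C(x)\in[x,c]$ for every $c\in C$. In a median space, picking $x\in\bar C$ and a sequence $c_n\in C$ with $c_n\to x$, the interval identity $d(x,\pi_C(x))+d(\pi_C(x),c_n)=d(x,c_n)$ forces $\pi_C(x)=x$ after letting $n\to\infty$, so $x\in C$. In a compact topological median algebra, I would take a net $c_\alpha\to x$ in $C$ and use the equivalent formulation $\pi_C(x)=m(x,\pi_C(x),c_\alpha)$; passing to the limit by continuity of $m$ yields $\pi_C(x)=m(x,\pi_C(x),x)=x$.

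For the converse implication \emph{closed convex implies retract}, in the median space case one simply invokes the preceding proposition, which supplies a $1$-Lipschitz nearest point projection already satisfying $\pi_C(x)\in[x,c]$ for all $c\in C$. In the compact topological median algebra case, I would define $\pi_C(x)$ as the unique point of
\[
\bigcap_{c\in C}\bigl([x,c]\cap C\bigr).
\]
Nonemptiness comes in three layers: pairwise intersection via $m(x,c_1,c_2)\in[x,c_1]\cap[x,c_2]\cap C$ (using $m(x,c_1,c_2)\in[c_1,c_2]\subseteq C$ by convexity), finite intersection via Helly's property, and the full infinite intersection via compactness combined with the closedness of each $[x,c]$ (itself a consequence of continuity of $m$, which realises the interval as the equaliser of $y\mapsto m(x,c,y)$ and the identity). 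Uniqueness follows because two candidates $y_1,y_2$ would satisfy $y_1\in[x,y_2]$ and $y_2\in[x,y_1]$, forcing $m(x,y_1,y_2)=y_1=y_2$; the identity $\pi_C(c)=c$ for $c\in C$ is then automatic from uniqueness, since $c$ itself belongs to $[c,c']\cap C$ for every $c'\in C$.

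The main obstacle is the compact topological case of the second direction: producing a single well-defined gate from an a priori infinite family of intervals. The argument genuinely requires both Helly's property (for finite subfamilies) and topological compactness (to pass to the full intersection), with continuity of the median ensuring that intervals are closed so that compactness can be invoked. Once the gate has been constructed, uniqueness and the retraction property drop out quickly from the median axioms, so the construction of this gate is really the only non-formal step in the proof.
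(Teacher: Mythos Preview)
The paper does not supply its own proof of this proposition; it simply attributes the result to Lemma~2.13 of \cite{ChaDH_median} and Lemma~2.6 of \cite{Fior_median_property}. So there is no in-paper argument to compare against.

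Your argument is correct and is essentially the standard one found in those references. A couple of minor remarks. First, in the median space direction \emph{closed $\Rightarrow$ retract}, you appeal to the preceding proposition on nearest point projections, which is stated for \emph{complete} median spaces; the proposition under discussion drops the word ``complete'', but this is an imprecision in the paper rather than in your proof (the result does require completeness in general). Second, in the compact median algebra case your closedness argument for $[x,c]$ via the equaliser of $y\mapsto m(x,c,y)$ and the identity implicitly uses that the space is Hausdorff; this is harmless here since the relevant compact median algebras (e.g.\ the Roller compactification) are Hausdorff, but it is worth making explicit. Otherwise the gate construction via pairwise intersections, Helly, and the finite intersection property is exactly the right mechanism.
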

The \textbf{\textit{convex hull}} of a subset $A\subseteq X$, denoted by $Conv(A)$, is the smallest convex subset containing $A$. The convex hull between two convex subsets $C_1,C_2\subseteq X$, denoted by $Conv(C_1,C_2)$, is the convex hull of their union.
\begin{prop}[Proposition 2.12 \cite{Roller} ]\label{prop_convex_hull_retract}
    If $M$ is a median algebra and $C_1,C_2\subseteq M$ are two retract subsets. Then the convex hull $Conv(C_1,C_2)$ is also a retract of $M$.
\end{prop}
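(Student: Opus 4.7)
The plan is to construct the retraction $\pi$ onto $Conv(C_1,C_2)$ explicitly from the two given retractions $\pi_1,\pi_2$ by the median formula
\[
\pi(x) \;:=\; m\bigl(\pi_1(x),\pi_2(x),x\bigr).
\]
Two properties then need to be checked: that $\pi(x) \in Conv(C_1,C_2)$ for every $x \in M$, and that $\pi(x) \in [x,c]$ for every $c \in Conv(C_1,C_2)$. Together these will say that $\pi$ is a retraction in the sense defined just above the proposition.

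For the first property I would use that $Conv(C_1,C_2)$ equals the intersection of all halfspaces containing $C_1 \cup C_2$, which is a standard consequence of the Separation Theorem applied to a candidate point lying outside the convex hull. For any halfspace $\hh$ with $C_1 \cup C_2 \subseteq \hh$, both $\pi_1(x)$ and $\pi_2(x)$ lie in $\hh$, and convexity of $\hh$ places the interval $[\pi_1(x),\pi_2(x)]$ inside $\hh$; since the median $\pi(x)$ lies in that interval, it too lies in $\hh$. Hence $\pi(x)$ belongs to every defining halfspace of the hull.

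For the second property I plan to reformulate $\pi(x) \in [x,c]$ as: every halfspace containing both $x$ and $c$ also contains $\pi(x)$. Fix such an $\hh$ with $x,c \in \hh$ and case-split on whether $\hh$ meets $C_1$ and $C_2$. Whenever $C_i \cap \hh \neq \emptyset$, choosing a witness $c_i \in C_i \cap \hh$ and invoking the retract axiom for $\pi_i$ gives $\pi_i(x) \in [x,c_i] \subseteq \hh$. If both $C_i$ meet $\hh$ then all three arguments of the median $\pi(x)$ lie in $\hh$ and $\pi(x) \in \hh$ by convexity; if only $C_1$ meets $\hh$ then $\pi_1(x), x \in \hh$ and $\pi(x) \in [\pi_1(x),x] \subseteq \hh$ (symmetrically for $C_2$); and the remaining case $C_1,C_2 \subseteq \hh^c$ would force $Conv(C_1,C_2) \subseteq \hh^c$, contradicting $c \in \hh$.

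The main subtlety will be in arranging the case analysis of the second step so that it is exhaustive and so that the retract axiom for $\pi_i$ is applied to a witness \emph{inside} $\hh$, which is what allows the convexity of $\hh$ to be leveraged to conclude $\pi_i(x) \in \hh$. Once this is set up, the argument uses only the Separation Theorem, convexity of halfspaces, and the defining identity $m(a,b,c) \in [a,b]\cap[b,c]\cap[a,c]$, and it does not require any metric structure on $M$.
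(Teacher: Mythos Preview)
The paper does not supply a proof of this proposition; it is quoted from Roller's thesis without argument. Your proof is correct and is in fact the standard one (and essentially Roller's): the formula $\pi(x)=m(\pi_1(x),\pi_2(x),x)$ is exactly the retraction used there, and your halfspace case-analysis for the gate condition is the right verification. One small remark: you do not need to check separately that $\pi$ fixes $Conv(C_1,C_2)$ pointwise, since taking $c=x$ in the gate condition $\pi(x)\in[x,c]$ already gives $\pi(x)=x$ whenever $x\in Conv(C_1,C_2)$.
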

As a consequence of the previous two propositions, the convex hull between two closed convex subsets is also closed.

\begin{lem}\label{lem_projection_intersect_gate}
If $X$ is a median algebra and let $C\subseteq X$ be a retract subset. Then for any convex subset $C'\subseteq X$ such that $C\cap C'\neq \emptyset$, we have $\pi_C(C')\subseteq C\cap C'$.\par 
In particular, we have $\pi_C(C')=C\cap C'$.
\end{lem}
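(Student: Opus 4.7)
The plan is to fix an arbitrary $x \in C'$ and show $\pi_C(x) \in C \cap C'$, then address the reverse inclusion for the ``in particular'' statement.

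First, since the hypothesis $C \cap C' \neq \emptyset$ is available, I would pick some $c_0 \in C \cap C'$. By the definition of a retract, $\pi_C(x) \in [x,c_0]$. Now $x$ and $c_0$ both lie in $C'$, and $C'$ is convex, so the interval $[x,c_0]$ is contained in $C'$. Hence $\pi_C(x) \in C'$. Combined with $\pi_C(x) \in C$ (which is built into $\pi_C$ being a map into $C$), this gives $\pi_C(x) \in C \cap C'$. Since $x \in C'$ was arbitrary, $\pi_C(C') \subseteq C \cap C'$.

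For the ``in particular'' reverse inclusion $C \cap C' \subseteq \pi_C(C')$, I would use the fact that $\pi_C$ restricts to the identity on $C$. This is a direct consequence of the retract axiom: for any $c \in C$ one has $\pi_C(c) \in [c,c] = \{c\}$, so $\pi_C(c) = c$. Thus, for every $c \in C \cap C'$, we get $c = \pi_C(c) \in \pi_C(C')$, which yields the desired inclusion and hence the equality $\pi_C(C') = C \cap C'$.

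There is no real obstacle here; the argument is two short applications of the definitions. The only point worth checking carefully is that convexity of $C'$ is genuinely needed (it is used precisely to contain the interval $[x,c_0]$), and that the retract axiom implicitly forces $\pi_C|_C = \mathrm{id}_C$, which is what allows the reverse inclusion to go through.
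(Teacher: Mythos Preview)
Your proof is correct and follows exactly the same argument as the paper: pick a point in $C\cap C'$, use the retract property to place $\pi_C(c')$ in the interval, and use convexity of $C'$ to conclude. The paper omits the reverse inclusion entirely, so your added line using $\pi_C|_C=\mathrm{id}_C$ is a small but welcome completion.
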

\begin{proof}
    Let $c'\in C' $ and $c\in C\cap C'$. Since $C'$ is convex, we have $\pi_C(c')\in [c,c']\subseteq C'$ which implies that $\pi_C(c')\in C\cap C'$.
\end{proof}

The following proposition is needed to ensure the measurability of some subsets and maps that we will encounter in the next subsections.
\begin{prop}\label{prop_minimal_halfspace_countable_orig}
    Let $X$ be a complete separable median space of finite rank and let $C\subseteq X$ be a proper convex subset. Then the subset of minimal closed convex subsets containing $C$ is a non empty countable subset.
\end{prop}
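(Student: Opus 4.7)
My plan is to prove the two assertions separately: non-emptiness via Zorn's lemma on the poset of closed halfspaces containing $C$, and countability via a pigeonhole argument combining the countable dense subset with the finite-rank bound on pairwise-transverse halfspaces.

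\emph{Non-emptiness.} By the Separation Theorem applied to $\bar C$ and a point in $X \setminus \bar C$, the poset $\mathcal{P}$ of closed halfspaces containing $C$, ordered by inclusion, is non-empty. For an arbitrary chain $(\hh_\alpha)$ in $\mathcal{P}$, set $\hh_\infty = \bigcap_\alpha \hh_\alpha$. Then $\hh_\infty$ is closed, convex, and contains $C$; its complement $\bigcup_\alpha \hh_\alpha^c$ is convex, since any two of its points lie in a common $\hh_\alpha^c$ by the chain property and hence so does their interval. Thus $\hh_\infty$ is a closed halfspace in $\mathcal{P}$, providing a lower bound for the chain, and Zorn's lemma produces a minimal element.

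\emph{Countability.} Fix a countable dense subset $D \subseteq X$ and let $\MM$ denote the set of minimal closed halfspaces containing $C$. The strategy is: \emph{(i)} each $\hh \in \MM$ is thick, so by density $D$ meets both $\mathring{\hh}$ and $\mathring{\hh^c}$, whence $\hh$ separates some pair in $D \times D$; \emph{(ii)} for each fixed pair $(x,y) \in D \times D$, at most $2\,\mathrm{rank}(X)$ elements of $\MM$ separate $x$ from $y$. Together these yield $\MM \subseteq \bigcup_{(x,y) \in D \times D} \MM_{x,y}$, a countable union of finite sets.

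The bound in (ii) comes directly from minimality: if two distinct $\hh_1, \hh_2 \in \MM$ both contain $x$ and exclude $y$, then $\hh_1 \cap \hh_2 \supseteq C \cup \{x\}$ and $\hh_1^c \cap \hh_2^c \ni y$ are non-empty; if $\hh_1 \cap \hh_2^c = \emptyset$ then $\hh_1 \subseteq \hh_2$, contradicting the minimality of $\hh_2$, so $\hh_1 \cap \hh_2^c \neq \emptyset$ and symmetrically $\hh_1^c \cap \hh_2 \neq \emptyset$. Thus $\hh_1$ and $\hh_2$ are transverse, and the finite-rank hypothesis bounds the number of such halfspaces by $\mathrm{rank}(X)$ in each of the two possible directions with respect to $(x,y)$.

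The main obstacle is step (i), the thickness of minimal halfspaces. If $\mathring{\hh} = \emptyset$, then $\hh$ would coincide with its wall; to derive a contradiction with minimality, the idea is to use completeness together with the retraction $\pi_{\bar C}$ (Lemma \ref{lem_projection_intersect_gate}) to exhibit a strictly smaller closed halfspace still containing $C$. It is at this step that the completeness and finite-rank hypotheses play the decisive role.
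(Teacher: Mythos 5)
Your step (i) --- that every minimal closed halfspace in $\MM$ is thick --- is a genuine gap, because the claim is false, not merely unproven. Take $X = [0,1]^2$ with the $\ell^1$ metric (a complete, separable median space of rank $2$) and $C = \{(0,0)\}$: then $\hh = \{0\}\times[0,1]$ is a minimal closed halfspace containing $C$ with $\mathring\hh = \emptyset$, so $D$ need not meet $\mathring\hh$ and $\hh$ separates no pair of $D\times D$. No argument via $\pi_{\bar C}$ can rescue this: there simply is no strictly smaller closed halfspace of $X$ inside $\hh$, so minimality does not force thickness.

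The right repair is to parametrize $\MM$ by a point of the open complement $\hh^c$ rather than of $\mathring\hh$: a proper closed halfspace automatically has nonempty open complement, which therefore meets $D$. Your step (ii) then transfers verbatim --- if $y\in D$ and $\hh_1\neq\hh_2$ in $\MM$ both satisfy $y\in\hh_i^c$, then $\hh_1\cap\hh_2\supseteq C$ and $\hh_1^c\cap\hh_2^c\ni y$ are nonempty, and $\hh_1\cap\hh_2^c$, $\hh_1^c\cap\hh_2$ are nonempty by minimality, so $\hh_1,\hh_2$ are transverse and the fiber of $\MM$ over $y$ has size at most $\mathrm{rank}(X)$. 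With this fix your argument is a slightly more direct variant of the paper's route, which instead abstracts the countability to any family of \emph{open}, pairwise disjoint-or-transverse halfspaces with bounded transversality (Proposition~\ref{prop_minimal_halfspace_countable}) and proceeds by induction on that bound; your version uses $C\neq\emptyset$, which the paper's does not. The Zorn's-lemma argument for non-emptiness is fine in outline.
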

Let us prove the above proposition in a larger setting.
\begin{prop}\label{prop_minimal_halfspace_countable}
    Let $X$ be a complete separable median space and $\HH'\subseteq \HH(X)$ be a familly of open halfspaces such that for any $\hh_1,\hh_2\in\HH'$, the halfspaces $\hh_1$ and $\hh_2$ are either disjoint or transverse. If there is $n\in \NN$ such that $\HH'$ contains no more than $n$ pairwise transverse halfspaces, then $\HH'$ is countable.
\end{prop}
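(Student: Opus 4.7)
The plan is to induct on $n$, the bound on the maximum size of a pairwise transverse subfamily of $\HH'$. The guiding idea is that, in the base case, a family of pairwise disjoint nonempty open sets in a separable metric space is automatically countable; and in the inductive step, the ``transverse neighbours'' of a single halfspace $\hh$ form a subfamily with a strictly smaller transversality bound, which allows us to cover $\HH'$ by countably many such subfamilies.

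For the base case $n = 1$, no two elements of $\HH'$ are transverse, so by the dichotomy hypothesis every pair in $\HH'$ is disjoint. Since a separable metric space is second countable, any collection of pairwise disjoint nonempty open subsets injects into a countable dense set (by picking one dense point in each open set), and is therefore countable.

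For the inductive step, assume the statement for all bounds strictly less than $n$, and let $\HH'$ satisfy the hypotheses with transversality number at most $n$. For each $\hh \in \HH'$, define
\[
\HH'_\hh := \{\hh' \in \HH' \setminus \{\hh\} : \hh' \text{ is transverse to } \hh\}.
\]
Any pairwise transverse subfamily of $\HH'_\hh$ together with $\hh$ is still pairwise transverse inside $\HH'$, so the transversality number of $\HH'_\hh$ is at most $n-1$. Since $\HH'_\hh \subseteq \HH'$, it also satisfies the disjoint-or-transverse dichotomy, and the inductive hypothesis yields that $\HH'_\hh$ is countable.

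Finally, apply Zorn's lemma to extract a maximal pairwise disjoint subfamily $\HH_{disj} \subseteq \HH'$. By the base-case argument, $\HH_{disj}$ is countable. For any $\hh' \in \HH' \setminus \HH_{disj}$, maximality forces $\hh'$ to meet some $\hh \in \HH_{disj}$, and the dichotomy then forces $\hh'$ to be transverse to $\hh$, i.e.\ $\hh' \in \HH'_\hh$. Therefore
\[
\HH' = \HH_{disj} \cup \bigcup_{\hh \in \HH_{disj}} \HH'_\hh,
\]
which is a countable union of countable sets. I do not expect serious obstacles; the only points requiring verification are that $\HH'_\hh$ genuinely inherits both the transversality bound (strictly reduced by one, thanks to $\hh$ itself) and the disjoint-or-transverse dichotomy, and that the openness of halfspaces in $\HH_{disj}$ is what lets separability count them.
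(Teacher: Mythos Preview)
Your proof is correct and follows essentially the same approach as the paper's: induction on $n$, with the base case handled by separability (pairwise disjoint nonempty open sets are countable), and the inductive step using Zorn's lemma to extract a maximal pairwise disjoint subfamily and then covering $\HH'$ by the countably many ``transverse neighbourhoods'' $\HH'_\hh$, each with strictly smaller transversality bound. The only cosmetic difference is that you explicitly include $\HH_{disj}$ in the final union (since your $\HH'_\hh$ excludes $\hh$), which is arguably cleaner than the paper's presentation.
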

\begin{proof}Since $X$ is separable, fix a dense countable subset $Y\subseteq X$ and proceed by induction on the number of maximal pairwise transverse halfspaces in $\HH'$. If all halfspace of $\HH'$ are disjoint, i.e., n = 1, then $\HH'$ is countable as we have a natural surjection from Y to $\HH'$.\par 
Let us assume that the proposition is true for some $n\in\NN$ and prove that it is true for $n+1$. We assume that $\HH'$ contains no $n+1$ halfspaces which are pairwise transverse. Assuming Zorn's lemma, consider a maximal subset $\HH''\subseteq \HH'$ consisting of pairwise disjoint halfspaces. Note that the subset $\HH''$ is countable by the same argument as in the first paragraph of the proof. By the maximality of $\HH''$, any $\hh\in \HH'\backslash\HH''$, is transverse to some $\hh'\in\HH''$. Hence we have 
\[
\HH'=\bigcup_{\hh\in \HH''}\HH_\hh^t,
\]
where $\HH_\hh^t$ is the subset of halfspaces in $\HH'$ that are transverse to $\hh$. Hence, it is enough to show that each $\HH_\hh^t$ is countable. The familly $\HH_\hh^t$ contains no $n$ halfspaces that are pairwise transverse. By the induction hypothesis, we conclude that each $\HH_\hh^t$ is countable, which completes the proof.

\end{proof}

\subsection{Roller compactification}\label{subsection_Roller}
There is a natural compactification for median spaces called the Roller compactification introduced by Fioravanti in \cite{Fior_median_property}. Let $X$ be a complete median space of finite rank. We have the following injective embedding of median algebra
\begin{eqnarray*}
i:X&\rightarrow& \II(X):=\prod_{a,b\in X}[a,b].\\
  x&\mapsto& (m(x,a,b))_{a,b\in X}
\end{eqnarray*}
Intervals in finite rank median spaces are compact, hence the median algebra $\II$ endowed with the product topology is a compact median algebra. The \textbf{\textit{Roller compactification}} $\bar{X}$ of $X$ is defined to be the closure of $i(X)$ in $\II(X)$. The boundary $\partial X = \bar{X} - i(X)$ of $i(X)$ in $\II(X)$ is called the \textbf{Roller boundary} of $X$.  
\begin{thm}[\cite{Fior_median_property}]
    The space $\bar{X}$ is a compact topological median algebra.
\end{thm}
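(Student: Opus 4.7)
The plan is to verify that the ambient product $\II(X)=\prod_{a,b\in X}[a,b]$ is itself a compact topological median algebra, check that the embedding $i$ is a median morphism, and then deduce that its closure inherits the required structure.

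First I would equip $\II(X)$ with the coordinatewise ternary operation, i.e. $m\bigl((u_{a,b})_{a,b},(v_{a,b})_{a,b},(w_{a,b})_{a,b}\bigr)=\bigl(m_{[a,b]}(u_{a,b},v_{a,b},w_{a,b})\bigr)_{a,b}$, where $m_{[a,b]}$ is the restriction of the median of $X$ to the interval $[a,b]$ (which is a median subalgebra of $X$ because intervals are convex). Since the four median algebra axioms hold coordinatewise, $\II(X)$ is a median algebra. Because $X$ has finite rank, the intervals $[a,b]$ are compact, and Tychonoff's theorem gives compactness of $\II(X)$ in the product topology. For the continuity of the median, it suffices to check continuity on each factor: in the metric median space $X$, the median is $1$-Lipschitz in each of its three arguments separately, which yields joint continuity of $m:X^3\to X$, and hence of its restriction to $[a,b]^3\to [a,b]$. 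Joint continuity of the coordinatewise median on the product then follows from the universal property of the product topology.

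Next I would verify that $i:X\to \II(X)$, $x\mapsto (m(x,a,b))_{a,b}$, is a median morphism. Coordinate by coordinate, this amounts to the identity $m(m(x,y,z),a,b)=m(m(x,a,b),m(y,a,b),m(z,a,b))$ in $X$, which is the statement that the gate projection $\pi_{[a,b]}=m(\cdot,a,b)$ onto the retract $[a,b]$ is a median morphism; this is a standard consequence of the median axioms (equivalently, it is Proposition \ref{prop_convex_hull_retract} applied to the retraction onto $[a,b]$, already used in the excerpt when identifying $\pi_{[a,b]}$ with $m(a,b,\cdot)$). Consequently $i(X)$ is a median subalgebra of $\II(X)$.

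Finally, I would take the closure $\bar X=\overline{i(X)}\subseteq \II(X)$. Since the median of $\II(X)$ is continuous, the closure of the median subalgebra $i(X)$ is again a median subalgebra: for sequences (or nets) $u^n\to u$, $v^n\to v$, $w^n\to w$ in $i(X)$, continuity gives $m(u^n,v^n,w^n)\to m(u,v,w)\in \bar X$. Being closed in the compact space $\II(X)$, the space $\bar X$ is compact, and the restriction of the continuous median of $\II(X)$ to $\bar X^3$ is continuous with values in $\bar X$, so $\bar X$ is a compact topological median algebra. The only step that requires genuine care is the continuity of the median on each factor, where the finite-rank hypothesis is used via the compactness (and hence the honest topological structure) of intervals; the rest of the argument is formal soft topology.
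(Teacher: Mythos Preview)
The paper does not supply its own proof of this statement: it is simply quoted from \cite{Fior_median_property}. Your sketch is essentially the standard argument and is correct. Two small comments. First, the identity $m(m(x,y,z),a,b)=m(m(x,a,b),m(y,a,b),m(z,a,b))$ that you need to show $i$ is a median morphism is \emph{literally} the third axiom in the definition of a median algebra given in the paper (with $t=a$, $w=b$), so there is no need to invoke Proposition~\ref{prop_convex_hull_retract}, which concerns convex hulls of retracts and is not the relevant statement here. Second, your closing remark slightly conflates two uses of the hypotheses: the finite-rank assumption is what makes each interval $[a,b]$ compact (and hence $\II(X)$ compact by Tychonoff), while the continuity of the median on each factor follows from the $1$-Lipschitz property of $m$ in each variable and holds in any metric median space, independently of rank or compactness.
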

The median operation on $\bar{X}$ is denoted by $\tilde{m}$. This compactification comes naturally with a monomorphisms of poc sets $i_\HH:\HH(X)\rightarrow\HH(\bar{X})$ which associates to each halfspace $\hh\in\HH(X)$ the following halfspace in the Roller compactification:
\[
\tilde{\hh}:=i_\HH(\hh)=\{x\in\bar{X}\ | \ \forall\ a\in \hh, \forall \ b\in \hh^c, \ \tilde{m}(x,a,b)\in\hh\}
\]
Remark that the intersection of $\tilde{\hh}$ with an interval $[\tilde{a},\tilde{b}]$ where $a,b\in X$ is exactly the image of $i(\hh\cap[a,b])$ as we have $[\tilde{a},\tilde{b}]=i([a,b])$ (see \cite[Corollary 4.4 (1)]{Fior_median_property}). 
\begin{prop}\label{prop_description_halfspace_arising_from_X}
    For any halfspace $\hh\in \HH(X)$ and $a,b\in X$ such that $a\in \hh$ and $b\in \hh^c$, we have $\tilde{\hh}=\tilde{\pi}_{[a,b]}^{-1}(\hh\cap[a,b])$, where $\tilde{\pi}_{[a,b]}$ is the retraction on $\bar{X}$ to $[a,b]$.\par
    In particular, any halfspace of $\bar X$ that is transverse to $i(X)$ is of the form $\tilde{\hh}$ for some $\hh\in X$.
\end{prop}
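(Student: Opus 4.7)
The plan is to derive both assertions from a single auxiliary observation: in any topological median algebra $M$, if $\mathfrak{k}$ is a halfspace transverse to a retract $C$ (meaning $\mathfrak{k}\cap C$ and $\mathfrak{k}^c\cap C$ are both nonempty) with retraction $\pi_C\colon M\to C$, then $\mathfrak{k}=\pi_C^{-1}(\mathfrak{k}\cap C)$. Indeed, by the defining property of a retract, $\pi_C(x)\in [x,c]$ for every $x\in M$ and every $c\in C$; picking $c\in \mathfrak{k}\cap C$ yields $x\in \mathfrak{k}\Rightarrow \pi_C(x)\in \mathfrak{k}$ by convexity of $\mathfrak{k}$, while picking $c\in \mathfrak{k}^c\cap C$ gives the converse by convexity of $\mathfrak{k}^c$.

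For the first assertion, I would apply this observation to the retract $[\tilde a,\tilde b]=i([a,b])\subseteq \bar X$ and to the halfspace $\tilde{\hh}$, with retraction $\tilde\pi_{[a,b]}$. Transversality is checked directly from the quantified definition of $\tilde{\hh}$: for any $a'\in \hh$ and $b'\in\hh^c$ we have $\tilde m(\tilde a,a',b')=m(a,a',b')\in [a,a']\subseteq \hh$ by convexity of $\hh$, hence $\tilde a\in \tilde{\hh}$, and symmetrically $\tilde b\in \tilde{\hh}^c$. The observation then yields $\tilde{\hh}=\tilde\pi_{[a,b]}^{-1}(\tilde{\hh}\cap [\tilde a,\tilde b])$, and the identity $\tilde{\hh}\cap [\tilde a,\tilde b]=i(\hh\cap[a,b])$ recalled in the paragraph preceding the statement closes the argument.

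For the second assertion, let $\tilde{\mathfrak{k}}\subseteq \bar X$ be a halfspace transverse to $i(X)$, witnessed by $a,b\in X$ with $\tilde a\in \tilde{\mathfrak{k}}$ and $\tilde b\in \tilde{\mathfrak{k}}^c$. I set $\hh:=i^{-1}(\tilde{\mathfrak{k}})\subseteq X$; since $i$ is an embedding of median algebras, the convexity of $\tilde{\mathfrak{k}}$ and of its complement passes to $\hh$ and $\hh^c$, so $\hh\in\HH(X)$. Applying the auxiliary observation to $\tilde{\mathfrak{k}}$ itself (it is transverse to $[\tilde a,\tilde b]$ by construction), I obtain
\[
\tilde{\mathfrak{k}}=\tilde\pi_{[a,b]}^{-1}\bigl(\tilde{\mathfrak{k}}\cap [\tilde a,\tilde b]\bigr)=\tilde\pi_{[a,b]}^{-1}\bigl(i(\hh\cap [a,b])\bigr)=\tilde{\hh},
\]
the middle equality following from $\tilde{\mathfrak{k}}\cap i([a,b])=i(i^{-1}(\tilde{\mathfrak{k}})\cap[a,b])=i(\hh\cap [a,b])$, and the last equality from the first part of the proposition.

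I do not expect serious obstacles; the only subtlety is confirming that the convexity of $\tilde{\mathfrak{k}}^c$ inside $\bar X$ restricts to convexity of $\hh^c=i^{-1}(\tilde{\mathfrak{k}}^c)$ inside $X$, but this is immediate from the fact that $i$ carries intervals in $X$ into intervals in $\bar X$. The real work is packaged into the excerpt's earlier identification of $\tilde{\hh}\cap [\tilde a,\tilde b]$, which lets a single retraction-with-a-witnessing-interval argument simultaneously compute $\tilde{\hh}$ and recognize an arbitrary halfspace transverse to $i(X)$ as one of this form.
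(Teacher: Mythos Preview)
Your proof is correct and follows essentially the same approach as the paper, which simply cites the fact that the inverse image of a halfspace under a median morphism (here the retraction $\tilde\pi_{[a,b]}$) is a halfspace. Your auxiliary observation that a halfspace transverse to a retract $C$ equals $\pi_C^{-1}$ of its trace on $C$ is exactly the content the paper leaves implicit, and you unpack both assertions carefully where the paper gives only a one-line hint.
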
 

\begin{proof}
This is a consequence of the fact that the inverse image of a halfspace under an median morphism is also a halfspace. 
\end{proof}

The following lemma states that any pair of distinct points in the Roller compactification are separated by the lift of a halfspace of $X$.
\begin{lem}\label{lem_separation_tilde_h}
    For any $\tilde{x},\tilde{y}\in\bar{X}$, there exists a halfspace $\hh\in \HH(X)$ such that $\tilde{\hh}\in\HH_{\bar{X}}(\tilde{x},\tilde{y})$.
\end{lem}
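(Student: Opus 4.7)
The plan is to reduce the separation problem in $\bar X$ to the Separation Theorem inside a single closed interval of $X$, and then lift the resulting halfspace via Proposition \ref{prop_description_halfspace_arising_from_X}. Since $\bar X$ embeds in $\II(X)=\prod_{a,b\in X}[a,b]$ with the product topology, distinct points $\tilde x\neq\tilde y$ must differ on some coordinate. I would pick $a,b\in X$ so that $p:=\tilde{\pi}_{[a,b]}(\tilde x)$ and $q:=\tilde{\pi}_{[a,b]}(\tilde y)$ are two distinct points of $[a,b]\subseteq X$. Applying the Separation Theorem to the disjoint convex singletons $\{p\},\{q\}\subseteq X$ then produces a halfspace $\hh\in\HH(X)$ with $p\in\hh^c$ and $q\in\hh$.

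It remains to verify that the lift $\tilde\hh$ separates $\tilde x$ from $\tilde y$ in $\bar X$. Proposition \ref{prop_description_halfspace_arising_from_X}, applied with the reference points $q\in\hh$ and $p\in\hh^c$, gives $\tilde\hh=\tilde{\pi}_{[p,q]}^{-1}(\hh\cap[p,q])$, so it suffices to compute $\tilde{\pi}_{[p,q]}(\tilde x)$ and $\tilde{\pi}_{[p,q]}(\tilde y)$. Since $[p,q]\subseteq[a,b]$ and both are closed convex subsets of the compact topological median algebra $\bar X$ (hence retracts), the nesting property of retractions $\pi_C\circ\pi_{C'}=\pi_C$ for $C\subseteq C'$ yields $\tilde{\pi}_{[p,q]}(\tilde x)=\tilde{\pi}_{[p,q]}(p)=p$ and analogously $\tilde{\pi}_{[p,q]}(\tilde y)=q$. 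Therefore $\tilde x\in\tilde\hh^c$ and $\tilde y\in\tilde\hh$, i.e.\ $\tilde\hh\in\HH_{\bar X}(\tilde x,\tilde y)$.

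The only step that merits a short verification is the nesting property of retractions, which follows directly from the gate characterization: setting $y=\pi_{C'}(z)$ and $w=\pi_C(y)$, for every $c\in C\subseteq C'$ one has $y\in[z,c]$ and $w\in[y,c]$, so convexity of the interval $[z,c]$ forces $w\in[z,c]$; since $c\in C$ was arbitrary this characterizes $w=\pi_C(z)$. Everything else is a straightforward unpacking of the embedding $i:X\hookrightarrow\II(X)$ and the lift $i_\HH$ of halfspaces, together with the identification $[\tilde a,\tilde b]=i([a,b])$ already recorded before the statement.
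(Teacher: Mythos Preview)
Your proof is correct and follows essentially the same approach as the paper: find an interval $[a,b]$ on which the retractions of $\tilde x$ and $\tilde y$ differ, separate those projections by a halfspace of $X$, and lift via Proposition~\ref{prop_description_halfspace_arising_from_X}. The only difference is that the paper applies the proposition directly with the interval $[a,b]$ (any halfspace separating $p$ from $q$ inside $[a,b]$ automatically separates $a$ from $b$, since $\hh$ and $\hh^c$ are convex and $[a,b]\subseteq\hh$ would follow from $a,b\in\hh$), whereas you pass to the sub-interval $[p,q]$ and invoke the nesting identity $\pi_C\circ\pi_{C'}=\pi_C$; this extra step is valid but unnecessary.
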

\begin{proof}
 We write $\tilde{x}=(x_I)_{I\in\II} $ and $\tilde{y}=(x_I)_{I\in\II} $. The points $\tilde{x},\tilde{y}\in\bar{X}$ being distinct, there exists an interval $I=[a,b]\subseteq X$ such that $x_I\neq y_I$. By \cite[Corollary 4.4 (2)]{Fior_median_property}, we have $i(x_I)=\tilde{\pi}_I(\tilde{x})$ and $i(y_I)=\tilde{\pi}_I(\tilde{y})$ (we abuse the notation by identifying $I$ with its embedding $i(I)$). Thus, $\tilde{\pi}_I(\tilde{x})$ and $\tilde{\pi}_I(\tilde{y})$ are distinct. Hence, the lift of any halfspace $\hh\in\HH(\tilde{\pi}_I(\tilde{x}),\tilde{\pi}_I(\tilde{y}))$ under $\tilde{\pi}_I^{-1}$ gives rise to a halfspace $\tilde{\hh}$ that separates $\tilde{y}$ from $\tilde{x}$.
\end{proof}
The Roller compactification $\bar{X}$ is isomorphic to the Busemann compactification (see \cite[Proposition 4.21]{Fior_median_property}). The space $\bar{X}$ is stratified into components where each component is a finite rank median space, and there is a unique component of maximal rank corresponding to the space $X$. Two points $\tilde{x},\tilde{y}\in \bar{X}$ are in the same component if the set of halfspaces that arises from $X$ and separates them is of finite $\nu-$measure. The stratification is preserved under the action of the isometry group. \par
\begin{lem}[Proposition 4.29 \cite{Fior_median_property}]\label{lem_halfspace_in_component}
    For any $x,y\in \bar{X}$ lying in the same component of $\bar{X}$, we have $\mathring \HH_{\bar{X}}(x,y) =\HHb(x,y)\cap i(\HH(X))$.
\end{lem}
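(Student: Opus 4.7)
I would prove the equality by establishing the two inclusions separately. The inclusion $\mathring\HH_{\bar{X}}(x,y)\subseteq \HHb(x,y)\cap i(\HH(X))$ follows from the density of $i(X)$ in $\bar X$, while the reverse is where the hypothesis that $x,y$ lie in the same component does the real work.

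For $\mathring\HH_{\bar{X}}(x,y)\subseteq \HHb(x,y)\cap i(\HH(X))$, let $\mathfrak{k}\in\mathring\HH_{\bar{X}}(x,y)$. Both $\mathrm{int}(\mathfrak{k})$ and $\mathrm{int}(\mathfrak{k}^c)$ are nonempty open subsets of $\bar X$, and since $i(X)$ is dense in $\bar X$ each meets $i(X)$. Thus $\mathfrak{k}$ is transverse to $i(X)$, and Proposition~\ref{prop_description_halfspace_arising_from_X} gives $\mathfrak{k}=\tilde{\hh}$ for some $\hh\in\HH(X)$; the separation of $x$ from $y$ is preserved, yielding $\mathfrak{k}\in\HHb(x,y)\cap i(\HH(X))$.

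For the converse, let $\hh\in\HH(X)$ with $y\in\tilde{\hh}$ and $x\in\tilde{\hh}^c$. By Proposition~\ref{prop_description_halfspace_arising_from_X}, $\tilde{\hh}=\tilde{\pi}_{[a,b]}^{-1}(\hh\cap[a,b])$ for any $a\in\hh^c\cap X$ and $b\in\hh\cap X$, and since $\tilde{\pi}_{[a,b]}$ is continuous it suffices to choose $a,b$ so that $y_{[a,b]}$ lies in the relative interior of $\hh\cap[a,b]$ in $[a,b]$, and symmetrically $x_{[a,b]}$ in the relative interior of $\hh^c\cap[a,b]$. The same-component hypothesis is the key ingredient here: the component $Y$ containing $x,y$ is itself a finite-rank median space embedded in $\bar X$, and within it one can find points arbitrarily close to $y$ still in $\tilde{\hh}$ and to $x$ still in $\tilde{\hh}^c$; approximating these by genuine points of $X$ via density of $i(X)$ in $\bar X$ should then yield $a,b$ for which the projections land strictly off the ``Dedekind boundary'' between $\hh\cap[a,b]$ and $\hh^c\cap[a,b]$ in $[a,b]$, giving the required interior conditions.

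The main obstacle is precisely this last step: the convex set $\hh\cap[a,b]$ need not be open in $[a,b]$, since only one of $\hh,\hh^c$ is open at the crossing point, so naively $y_{[a,b]}$ might coincide with this meeting point and place $y$ on $\partial\tilde{\hh}$ rather than in its interior. The same-component assumption is what provides the local room to perturb $a,b$ within the component and then back into $X$ to avoid this degenerate situation; without it the inclusion can genuinely fail, as is already visible in $\bar{\mathbb{R}}$ with $x=0$, $y=+\infty$, and $\hh=(0,\infty)$, where $x,y$ lie in distinct components and $y\in\tilde{\hh}=(0,+\infty]$ while $x=0\notin\mathrm{int}(\tilde{\hh}^c)=[-\infty,0)$.
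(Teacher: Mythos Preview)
The paper does not prove this lemma; it is quoted verbatim from Fioravanti's \cite{Fior_median_property}, Proposition~4.29, so there is no in-paper argument to compare against. That said, let me assess your attempt on its own merits.

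Your first inclusion is fine and is essentially the argument the paper gives later in Lemma~\ref{lem_separation_Roller_compactification}: any halfspace of $\bar X$ with nonempty interior on both sides meets $i(X)$ on both sides by density, hence is transverse to $i(X)$ and therefore lies in $i_\HH(\HH(X))$ by Proposition~\ref{prop_description_halfspace_arising_from_X}.

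The second inclusion, however, is not proved. You correctly isolate the difficulty---that $\tilde\pi_{[a,b]}(y)$ might land exactly on the frontier between $\hh\cap[a,b]$ and $\hh^c\cap[a,b]$---and you correctly diagnose that the same-component hypothesis is what rules this out. But the sentence ``approximating these by genuine points of $X$ via density of $i(X)$ in $\bar X$ should then yield $a,b$ for which the projections land strictly off the Dedekind boundary'' is not an argument; it is a hope. Density of $i(X)$ in $\bar X$ is with respect to the Roller topology, and there is no reason a Roller-approximating sequence in $X$ should control where the projection $\tilde\pi_{[a,b]}(y)$ sits relative to a fixed halfspace. The actual mechanism in Fioravanti's proof is different: one uses that the restriction of the Roller topology to a component coincides with the intrinsic metric topology of that component (this is the content around Proposition~4.19 and Lemma~4.28 in \cite{Fior_median_property}), together with the fact that the measure on $\HH(X)$ has no atoms among thick halfspaces in the relevant sense, so that a halfspace from $X$ separating $x$ from $y$ in the same component must do so with positive ``distance'' on each side. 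Your perturbation idea does not capture this, and as written the reverse inclusion remains open.

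Your closing counterexample is apt and shows you understand why the hypothesis is necessary; what is missing is the positive argument exploiting it.
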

\begin{prop}\label{prop_topology_Roller_compactification}
    The topology induced on $X$ from the Roller compactification is the same as the topology generated by open halfspaces.
\end{prop}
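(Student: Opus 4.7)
Denote by $\tau_R$ the topology on $X$ induced from the embedding $i:X\hookrightarrow\bar X$ and by $\tau_H$ the topology generated by the open halfspaces $\{\mathring\hh:\hh\in\HH(X)\}$. I will establish both inclusions $\tau_H\subseteq\tau_R$ and $\tau_R\subseteq\tau_H$. The key tool, used in both directions, is the coordinate projection $\tilde\pi_{[a,b]}:\bar X\to[a,b]$: it is continuous by construction of the product topology on $\II(X)$, and its restriction $\pi_{[a,b]}:X\to[a,b]$ is a $1$-Lipschitz median morphism.

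\textbf{Step 1 ($\tau_H\subseteq\tau_R$).} Fix an open halfspace $\mathring\hh$ and $x\in\mathring\hh$. Pick any $b\in\hh^c$ (the claim is trivial if $\hh^c=\emptyset$), and let $U$ be the relative interior of $\hh\cap[x,b]$ inside $[x,b]$. Since a metric ball around $x$ is contained in $\hh$, the point $x$ lies in $U$, so $\tilde\pi_{[x,b]}^{-1}(U)\subseteq \bar X$ is an open neighbourhood of $i(x)$ by continuity. For any $y\in X$ with $\pi_{[x,b]}(y)\in U$, choose $\varepsilon>0$ so that $B_{[x,b]}(\pi_{[x,b]}(y),\varepsilon)\subseteq\hh\cap[x,b]$; the $1$-Lipschitz property of $\pi_{[x,b]}$ sends the ball $B_X(y,\varepsilon)$ into $\pi_{[x,b]}^{-1}(\hh\cap[x,b])=\hh$, the latter equality coming from the convexity of $\hh$ and $\hh^c$ together with $x\in\hh$, $b\in\hh^c$. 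Hence $B_X(y,\varepsilon)\subseteq\hh$, so $y\in\mathring\hh$, which shows that $\mathring\hh$ is $\tau_R$-open.

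\textbf{Step 2 ($\tau_R\subseteq\tau_H$).} A sub-basis of $\tau_R$ is given by the preimages $\pi_{[a,b]}^{-1}(V)$ for $V\subseteq[a,b]$ open, obtained by pulling back the product sub-basis of $\II(X)$. Intervals $[a,b]$ in a finite rank median space are compact and carry the structure of a finite product of rank-one arcs, so any open $V\subseteq [a,b]$ can be written as a union of finite intersections of open halfspaces of $[a,b]$. Because $\pi_{[a,b]}$ is a median morphism, preimages of halfspaces of $[a,b]$ are halfspaces of $X$; because it is also continuous in the metric topology, preimages of open subsets are open in $X$, and the combination of these two properties turns preimages of open halfspaces of $[a,b]$ into open halfspaces of $X$. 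Therefore $\pi_{[a,b]}^{-1}(V)$ is a union of finite intersections of open halfspaces of $X$, hence $\tau_H$-open.

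\textbf{Main obstacle.} The principal subtlety lies in Step 2, where I invoke the decomposition of open subsets of an interval $[a,b]$ as unions of finite intersections of open halfspaces of $[a,b]$. This is where the finite rank hypothesis enters essentially, through the product-of-arcs structure of intervals. The identity $\pi_{[x,b]}^{-1}(\hh\cap[x,b])=\hh$ appearing in Step 1, although elementary, is the pivot that links the metric and median structures; it allows the continuity of coordinate projections on $\bar X$ to transfer to a halfspace-sensitive statement inside $X$.
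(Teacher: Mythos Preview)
Your overall architecture coincides with the paper's: reduce to intervals via the coordinate projections $\pi_{[a,b]}$, use that on intervals the metric and halfspace topologies agree, and pull this back using that preimages of open halfspaces under gate projections are open halfspaces. Step~1 is correct (and in fact one can shorten it: for $a\in\hh$, $b\in\hh^c$ one has $\hh=\pi_{[a,b]}^{-1}(\hh\cap[a,b])$ directly, so $\hh$ is the preimage of a metric-open subset of the factor $[a,b]$, hence $\tau_R$-open).

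The gap is in Step~2. The assertion that an interval in a finite rank median space ``carries the structure of a finite product of rank-one arcs'' is false. For a concrete counterexample, take the $0$-skeleton of the L-shaped CAT(0) square complex with vertex set $\{(i,j):0\le i\le 2,\ 0\le j\le 2\}\setminus\{(2,2)\}$; the interval between $(2,0)$ and $(0,2)$ has eight points, rank two, and is not isomorphic (as a median algebra) to any product of two chains --- one checks for instance that the halfspace $\{y\ge 2\}$ is contained in $\{x\le 1\}$ rather than transverse to it, which cannot happen in a product grid. More conceptually, intervals in median algebras are distributive lattices, and finite distributive lattices are products of chains only when their poset of join-irreducibles is a disjoint union of chains, which fails here.

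What you actually need in Step~2 is precisely the content of Lemma~\ref{lem_lineal_median_algebra_proper_embedding_Roller}: in a finite rank median space with no facing triple (which is the case for any interval, since among three pairwise disjoint halfspaces of $[a,b]$ one must be empty), the metric topology is generated by open halfspaces. The paper's proof of that lemma uses the finite rank hypothesis to show that the family of minimal closed halfspaces at distance $\le t$ from a point is finite, and that the intersection of their complements is contained in a small ball. If you replace your product-of-arcs sentence by an invocation of this lemma, your Step~2 goes through and your proof becomes essentially identical to the paper's.
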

Before proving the above proposition, we need the following lemma.
\begin{lem}\label{lem_lineal_median_algebra_proper_embedding_Roller}
   Let $X$ be a complete median space of rank $n$ that contains no facing triple (i.e. a triple of halfspaces that are pairwise disjoint). Then the topology generated by open halfspaces coincides with the one generated by the metric.\par 
  In particular, if $X$ is an interval, then both topologies coincide.
  \end{lem}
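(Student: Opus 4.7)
I would prove the two inclusions of topologies separately. The inclusion of the halfspace topology into the metric topology is immediate: every open halfspace is the complement of a closed halfspace, which is a closed convex subset of $X$ and hence metrically closed, so open halfspaces are metric-open.

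For the reverse inclusion, given $x \in X$ and $\epsilon > 0$, I want finitely many open halfspaces $\hh_1, \ldots, \hh_k$ each containing $x$ in its interior with $\bigcap_i \hh_i \subseteq B(x, \epsilon)$. The Crofton formula $d(x, y) = \nu(\HH(x, y))$ translates this into controlling the $\nu$-measure of halfspaces separating $x$ from points of this intersection. My approach is to organize the halfspaces containing $x$ by ``direction'' into finitely many maximal nested chains, exploiting the rank bound on pairwise transverse families together with the no-facing-triple bound on pairwise disjoint families; Proposition \ref{prop_minimal_halfspace_countable} then makes each compatible family countable. Within each chain, countable additivity of $\nu$ allows one to choose a halfspace whose tail past $x$ has arbitrarily small $\nu$-measure, and combining finitely many such choices yields the desired neighborhood.

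The interval case highlighted in the ``in particular'' clause is the prototype of this argument. Any interval $[a, b]$ automatically has no facing triple, since a nontrivial halfspace of $[a, b]$ must contain exactly one of the two endpoints, so by pigeonhole any three such halfspaces have two sharing an endpoint and therefore intersecting. In this situation, the nested chains of halfspaces containing a point $x \in [a, b]$ in their interior are indexed by the endpoints, and selecting one halfspace per chain with small tail already produces a halfspace-open neighborhood of $x$ contained in $B(x, \epsilon)$.

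The main obstacle is the combinatorial organization of halfspaces around $x$ into finitely many chains in the general finite-rank setting. While the rank bound controls transverse families and the no-facing-triple condition controls disjoint families, extracting a uniformly finite collection of ``directions'' at $x$ in the continuous median setting is nontrivial and requires careful use of Helly's property together with Proposition \ref{prop_minimal_halfspace_countable} to reduce from arbitrary halfspace configurations to a manageable countable structure before selecting the finite cover.
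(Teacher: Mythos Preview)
Your overall strategy matches the paper's: the nontrivial direction is to produce, for each $x$ and $r>0$, finitely many open halfspaces containing $x$ whose intersection lies in $B(x,r)$. However, the execution has a gap you yourself flag as ``the main obstacle'': the decomposition of halfspaces around $x$ into finitely many maximal nested chains is never carried out, and your invocation of Proposition~\ref{prop_minimal_halfspace_countable} is misplaced --- that proposition concerns families of open halfspaces that are pairwise \emph{disjoint or transverse} (an antichain-type condition), not nested chains, so it does not give you countable chains. The countable-additivity step is also unclear as stated: $\nu$ is a measure on the set of halfspaces, and the phrase ``tail past $x$ has small $\nu$-measure'' does not attach to a single halfspace in a chain in any obvious way.

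The paper avoids the chain decomposition entirely. It fixes $t = r/n$ and considers the set $\HH_x^t$ of \emph{minimal} closed halfspaces $\hh$ not containing $x$ with $d(x,\hh)\le t$. Minimality forces any two elements of $\HH_x^t$ to be either disjoint or transverse; the rank bound (at most $n$ pairwise transverse) together with the no-facing-triple hypothesis (at most two pairwise disjoint) then makes $\HH_x^t$ finite. The inclusion $\bigcap_{\hh\in\HH_x^t}\hh^c\subseteq B(x,r)$ is obtained by citing Lemma~3.7 of \cite{Messaci}. Thus the finiteness you were seeking comes not from counting ``directions'' via chains, but directly from an antichain of minimal halfspaces at a fixed distance scale --- exactly the kind of family governed by Proposition~\ref{prop_minimal_halfspace_countable}, except that here the stronger hypotheses yield finiteness rather than mere countability.
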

\begin{proof}
    Let us fix a point $x\in X$ with $r>0$ and show that there exist open halfspaces $\hh_1,..,\hh_k\in \HH(X)$ such that they intersections contains $x$ and are contained in the ball $B(x,r)$. Let us denote by $\HH_x^t:=\{\hh\in\HH(X)\ | \ \hh\ \text{minimal and closed with} \ d(x,\hh)\leq t\}$. By Zorn's lemma, the subset $\HH_x^t$ is not empty. As the median space is of finite rank and contains no facing triple, $\HH_x^t$ contains finitely many minimal elements. By taking $t=\frac{r}{n}$, we deduce from \cite[Lemma 3.7]{Messaci}, that $\displaystyle{\bigcap_{\hh\in\HH_x^t}\hh^c}\subseteq B(x,r)$, which completes the proof.
\end{proof}
\begin{proof}[Proof of Proposition \ref{prop_topology_Roller_compactification}]
    The result comes from Lemma \ref{lem_lineal_median_algebra_proper_embedding_Roller} and the fact that the inverse image of an open halfspace under the projection into closed convex subsets is also an open halfspace.
\end{proof}
In Section \ref{section_superrigidity_of_homomorphisms}, we will need the following proposition.
\begin{prop}\label{prop_Borel_structure_Roller}
    Let $X$ be a complete separable median space of rank $n$. Then the topology induced from the Roller compactification induces the same Borel structure as the one induced from the median metric.
\end{prop}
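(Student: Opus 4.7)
The plan is to prove the equality of Borel $\sigma$-algebras by establishing the two inclusions separately.

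For the inclusion ``Roller-Borel $\subseteq$ metric-Borel'', I would first note that the defining embedding $i \colon X \to \prod_{a,b \in X}[a,b]$, $z \mapsto (m(z,a,b))_{a,b}$, has coordinates $m(\cdot,a,b) = \pi_{[a,b]}$ that are all $1$-Lipschitz for the median metric $d$; hence $i$ is metric-continuous and the Roller topology on $X$ is coarser than the metric topology. Equivalently, by Proposition \ref{prop_topology_Roller_compactification}, the Roller topology on $X$ is generated by open halfspaces, which are metric-open sets. Either formulation yields the inclusion of Borel $\sigma$-algebras.

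For the reverse direction, the plan is to establish that for every $x \in X$ the distance function $d(x, \cdot) \colon X \to \RR$ is lower semicontinuous with respect to the Roller topology. Granting this, each closed ball $\{y : d(x,y) \le r\}$ is Roller-closed, so every open metric ball
\[
B(x,r) \;=\; \bigcup_{n \ge 1}\{y \in X : d(x,y) \le r - 1/n\}
\]
is Roller-Borel. Since $(X,d)$ is separable, a countable collection of such open balls generates the metric Borel $\sigma$-algebra, completing the argument.

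The main step is the lower semicontinuity. Let $y_\alpha \to y$ be a net in $X$ converging in the Roller topology. Reading off the $(x,y)$-coordinate of the product embedding, the projection $\pi_{[x,y]}(y_\alpha) = m(y_\alpha, x, y)$ converges to $\pi_{[x,y]}(y) = y$ in the metric topology of the compact interval $[x,y]$, which coincides with the restriction of $d$. Combining with the $1$-Lipschitz bound
\[
d(x, y_\alpha) \;\geq\; d\bigl(\pi_{[x,y]}(x),\, \pi_{[x,y]}(y_\alpha)\bigr) \;=\; d\bigl(x,\, \pi_{[x,y]}(y_\alpha)\bigr)
\]
and passing to $\liminf$, one obtains $\liminf_\alpha d(x, y_\alpha) \ge d(x, y)$, which is precisely lower semicontinuity.

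I do not anticipate serious obstacles: the argument is a direct consequence of the construction of $\bar X$ as a closed subspace of $\prod_{a,b}[a,b]$, each factor compact in its metric topology by the finite rank assumption, together with the $1$-Lipschitz nature of the projection onto intervals. The only mild subtlety is that $\bar X$ is not known to be metrizable, so I phrase the lower semicontinuity criterion in terms of nets rather than sequences.
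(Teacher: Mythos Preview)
Your proof is correct and takes a genuinely different route from the paper.

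The paper argues the nontrivial inclusion (metric-Borel $\subseteq$ Roller-Borel) by writing each open metric ball as a countable union of closures of convex hulls of smaller balls (using that $Conv(B(x,r))\subseteq B(x,nr)$ in rank $n$, from \cite{Messaci}, together with separability), and then expressing each closed convex set as a countable intersection of closed halfspaces via Proposition~\ref{prop_minimal_halfspace_countable}. Your argument bypasses both of these ingredients: you observe directly that $d(x,\cdot)$ is Roller lower semicontinuous because the $(x,y)$-coordinate of the product embedding is $m(\cdot,x,y)$, and $m(y_\alpha,x,y)\in[x,y_\alpha]$ forces $d(x,y_\alpha)\ge d(x,m(y_\alpha,x,y))\to d(x,y)$. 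This immediately gives that closed metric balls are Roller-closed, hence open balls are Roller-$F_\sigma$. Your approach is more elementary and self-contained; the paper's approach, on the other hand, yields the slightly finer structural statement that closed convex subsets are countable intersections of closed halfspaces, which is of independent use elsewhere in the paper. Your care with nets is appropriate, since $\bar X$ is not assumed metrizable.
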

\begin{proof}
    The convex hull of any ball of radius $r$ is contained in a ball of radius $n.r$ (see \cite[Proposition 3.6]{Messaci}). Thus, the space being separable, any open ball is the countable union of the closure of convex hull of balls. Hence, it is enough to show that any closed convex subset is obtained as a countable intersection of closed halfspaces. 
 Let $C\subseteq X$ be a closed convex subset. We set $\HH_C^m$ to be the subset of minimal closed halfspaces containing $C$. The space $X$ being separable, the family $\HH_C^m$ is countable by Proposition \ref{prop_minimal_halfspace_countable}. By the Separation Theorem, $C$ is obtained as the intersection of all elements $\HH_C^m$, which completes the proof. 
\end{proof}
We recall the definition of the regular boundary of an irreducible median space, which is the part of the boundary of $\bar{X}$ that encodes the tree-like behavior of the median space.
\begin{defi}
    Let $X$ be a complete irreducible median space of finite rank. A point $x\in \partial X$ is said to be \textbf{\textit{regular}} if there exists a sequence of halfspaces $(\hh_i)_{i\in\NN}$ such that for any $i\in\NN$ we have 
    \begin{itemize}
    \item $x\in \tilde{\hh}_i$,
    \item $\hh_{i+1}\subseteq \hh_i$,
    \item $\hh_i^c$ and $\hh_{i+1}$ are strongly separated (see Definition \ref{defi_nomenclature_halfspaces}), with $d(\hh_i^c,\hh_{i+1})>r$ for some $r>0$ that does not depends on the index $i$.
    \end{itemize}
    We call the set of regular points the \textbf{\textit{regular boundary}} and denote it by $\partial_r X$.
\end{defi}

Under mild regularities assumptions on the action of the isometry group of the median space, the median space contains uncountably many regular points. The regularity assumption ensures the existence of many contracting elements in the isometry group, and the limit set of such contracting isometries is constituted of regular points. The regularity of an action is essentially characterized by the following definitions.
\begin{defi}\label{def_Roller_minimal}
    Let $X$ be a complete median space of finite rank that is not a single point. We say that an isometric action $\Gamma\curvearrowright X$ is
    \begin{itemize}
        \item \textbf{\textit{Roller minimal}} if there is no proper $\Gamma$-invariant closed convex subset in the Roller compactification $\bar{X}$ of $X$.
        \item \textbf{\textit{Roller non elementary}} if there is no finite orbit in the Roller compactification $\bar{X}$ of $X$.
    \end{itemize}
\end{defi}

\begin{rmk}
Let $X$ be a complete irreducible median space of finite rank. If the action $Isom(X)\curvearrowright X$ is Roller minimal and Roller non elementary, then $\partial_r X$ is not empty (see Remark \ref{rmk_construction_regular_point}), and in fact, it is not difficult to see that it is uncountable.
\end{rmk}

One particularity of the regular boundary is that the median point of any triple of distinct points in it lies inside the median space, as shown in the following lemma. 
\begin{lem}\label{lem_interval_between_strongly_separated_ultrafilters}
Let $X$ be a complete finite rank median space and $\tilde{x},\tilde{y}\in\partial_r X$ two distinct points in the regular boundary. Then for any $\tilde{z}\in \bar{X}\backslash\{\tilde{x},\tilde{y}\}$, we have $m(\tilde{z},\tilde{x},\tilde{y})\in X$.
\end{lem}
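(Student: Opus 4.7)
My plan is to show $m := m(\tilde z, \tilde x, \tilde y) \in X$ by producing two points $a, b \in X$ with $m \in [\tilde a, \tilde b]$. Since $[\tilde a, \tilde b] = i([a,b])$ for $a, b \in X$ (as noted just after the definition of the Roller compactification), this places $m$ in $i(X)$, which is the desired conclusion under the standard identification.

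The candidates $a, b$ will come from the regularity witnesses $(\hh_j^x)_{j \in \NN}$ and $(\hh_k^y)_{k \in \NN}$ of $\tilde x$ and $\tilde y$. The central auxiliary statement I would establish first is the following \textbf{Key Claim}: for all sufficiently large $j, k$ one has $\tilde\hh_j^x \cap \tilde\hh_k^y = \emptyset$ in $\bar X$. From this, by running the same compactness argument with $\tilde y, \tilde x, \tilde z$ playing the role of candidate cluster points, I would deduce that $\tilde y \notin \tilde\hh_j^x$ and $\tilde x \notin \tilde\hh_k^y$ eventually, and since $\tilde z \neq \tilde x, \tilde y$, that $\tilde z \notin \tilde\hh_j^x$ and $\tilde z \notin \tilde\hh_k^y$ eventually. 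To prove the Key Claim I would assume it fails, use compactness of $\bar X$ to extract $p \in \bigcap_j \tilde\hh_j^x \cap \bigcap_k \tilde\hh_k^y$, and derive a contradiction from Lemma \ref{lem_separation_tilde_h} (separating $\tilde x$ from $\tilde y$) combined with the strong separation of consecutive halfspaces in each sequence, which rigidly pins down the ``direction'' of each regular point.

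Fixing $j, k$ large and choosing $a \in \hh_{j+1}^x$, $b \in \hh_{k+1}^y$ in $X$ (non-empty since halfspaces are non-empty), the Key Claim yields $a \in \hh_{k+1}^{y,c}$ and $b \in \hh_{j+1}^{x,c}$. To show $m \in [\tilde a, \tilde b]$ I would argue by contradiction that no halfspace $\hh \in \HH(X)$ can satisfy $m \in \tilde\hh$ with $a, b \in \hh^c$. Since $m \in \tilde\hh$ iff at least two of $\tilde x, \tilde y, \tilde z$ lie in $\tilde\hh$, I case-split on which pair. In the representative case $\tilde x, \tilde y \in \tilde\hh$: the points $\tilde x$ and $a$ populate the two intersections of $\tilde\hh$ and $\tilde\hh^c$ with $\tilde\hh_{j+1}^x$, while $\tilde y$ (from the Key Claim) and $b$ (from disjointness) populate the two intersections with $\tilde\hh_{j+1}^{x,c}$; hence $\hh$ is transverse to $\hh_{j+1}^x$, and strong separation of $\hh_j^{x,c}$ and $\hh_{j+1}^x$ forces either $\hh \subseteq \hh_j^x$ (contradicting $\tilde y \in \tilde\hh$ together with $\tilde y \notin \tilde\hh_j^x$) or $\hh \supseteq \hh_j^{x,c}$ (contradicting $b \in \hh_j^{x,c} \cap \hh^c$). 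The cases $\tilde x, \tilde z \in \tilde\hh$ and $\tilde y, \tilde z \in \tilde\hh$ are handled symmetrically, invoking the Key Claim applied to $\tilde z$ relative to the $x$- and $y$-sequences respectively.

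The main obstacle is the Key Claim. In a real tree it is immediate because $\bigcap_j \tilde\hh_j^x = \{\tilde x\}$, but in a general finite rank median space this nested intersection may be a non-trivial closed convex subset, so the argument must extract the required rigidity from both the uniform gap $r > 0$ and the strong-separation axiom of the regularity sequences to ensure that distinct regular boundary points encode genuinely distinct ``directions at infinity''.
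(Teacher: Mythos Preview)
Your approach works and is genuinely different from the paper's, but you have overestimated the difficulty of your Key Claim. In fact $\bigcap_j \tilde\hh_j^x$ is \emph{always} the singleton $\{\tilde x\}$ under the regularity hypothesis: if $p\neq q$ both lie in every $\tilde\hh_j^x$, take $\hh\in\HH(X)$ with $p\in\tilde\hh$, $q\in\tilde\hh^c$ (Lemma~\ref{lem_separation_tilde_h}); then $\hh\cap\hh_j^x$ and $\hh^c\cap\hh_j^x$ are nonempty for each $j$, so either $\hh$ is transverse to $\hh_j^x$ or one of $\hh,\hh^c$ is contained in $\hh_j^x$. The latter can hold for only finitely many $j$ because $\bigcap_j\hh_j^x=\emptyset$ in $X$ (from the uniform gap $r>0$), forcing $\hh$ to be transverse to two consecutive $\hh_j^x$, which contradicts strong separation of $\hh_j^{x,c}$ and $\hh_{j+1}^x$. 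With this singleton property your Key Claim and the facts $\tilde y,\tilde z\notin\tilde\hh_j^x$ (eventually) are immediate, and your halfspace case-split does then correctly place $m$ in $[\tilde a,\tilde b]=i([a,b])$.

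The paper's proof is shorter and uses a different consequence of strong separation, the gate property, together with continuity of the median on $\bar X$. It asserts the singleton identities $\{\tilde x\}=\bigcap_i\tilde\hh_i$ and $\{\tilde y\}=\bigcap_i\tilde\ttt_i$, re-indexes so that $\tilde z\in\tilde\hh_0^c\cap\tilde\ttt_0^c$, picks $x_i\in\hh_i\setminus\hh_{i+1}$ and $y_i\in\ttt_i\setminus\ttt_{i+1}$, and observes that $\pi_{\hh_0^c}(x_i)$ and $\pi_{\ttt_0^c}(y_i)$ are independent of $i\geq 1$ (the gate between strongly separated halfspaces is a single point). From this $m(\tilde z,x_i,y_i)$ is constant for $i\geq 1$ and lies in $X$ since $x_i,y_i\in X$; continuity of $m$ on $\bar X$ then gives $m(\tilde z,\tilde x,\tilde y)=m(\tilde z,x_1,y_1)\in X$. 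Your route avoids the limit argument by pinning $m$ directly to a fixed interval via halfspaces; it makes the role of strong separation more explicit but requires more casework.
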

\begin{proof}
    Let $(\hh_i)_{i\in\NN}$, $(\ttt_i)_{i\in\NN}$ be descending sequences of open halfspaces of $X$ such that $\hh_{i+1}$ and $\hh_{i}^c$ (respectively $\ttt_{i+1}$ and $\ttt_{i}^c$) are strongly separated, with $\displaystyle{\tilde{x}=\bigcap_{i\in\NN}\tilde{\hh}_i}$ and $\displaystyle{\tilde{y}=\bigcap_{i\in\NN}\tilde{\ttt}_i}$. Fix $\tilde{z}\in \bar{X}\backslash \{\tilde{x},\tilde{y}\}$ and assume without loss of generality that $z\in\hh_0^c\cap\ttt_0^c$. Consider then sequences of points $(\tilde{x}_i)_{i\in\NN_{\geq 1}}$ and $(\tilde{y}_i)_{i\in\NN_{\geq 1}}$ such that $x_i\in\hh_i\backslash\hh_{i+1}$ and $y_i\in\ttt_i\backslash\ttt_{i+1}$. Remark that $\pi_{\hh_0^c}(x_i)=\pi_{\hh_0^c}(x_j)$ and $\pi_{\ttt_0^c}(y_i)=\pi_{\ttt_0^c}(y_j)$, for any $i,j\in\NN_{\geq 1}$. Hence, we get
    \[
    m(\tilde{z},\tilde{x}_i,\tilde{y}_i)=m(\tilde{z},\tilde{x}_j,\tilde{y}_j)=m(\tilde{z},\tilde{x}_1,\tilde{y}_1)\in X.
    \]
    On the other hand, by the continuity of the median operation, we conclude that 
    \[
   m(\tilde{z},\tilde{x},\tilde{y}) = \lim_{i\rightarrow +\infty}  m(\tilde{z},\tilde{x}_i,\tilde{y}_i) = m(\tilde{z},\tilde{x}_1,\tilde{y}_1) \in X.
    \]
\end{proof}

The idea of generating rank 1 elements from elements that ``skewer" through two strongly separated halfspaces goes back to Caprace and Sageev \cite{SagC}, where they proved rank rigidity theorem for CAT(0) cube complexes. In \cite{Fior_tits}, Fioravanti extended the machinery to the case of median spaces.
\begin{prop}[Flipping Lemma (Corollary 5.4 \cite{Fior_tits})]\label{lem_flipping_lemma}
    Let $X$ be a complete irreducible median space of finite rank such that $Isom(X)$ acts Roller non elementarily and Roller minimally. Then for any thick halfspace $\hh$, there exists $g\in Isom(X)$ such that $g.\hh\subset \hh^c$. We say that $g$ \textbf{\textit{flips}} the halfspace $\hh$.
 \end{prop}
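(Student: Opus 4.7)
The plan is a classical flipping argument in the style of Caprace--Sageev, adapted to the median setting through the Roller compactification. I would argue by contradiction: suppose no $g \in Isom(X)$ satisfies $g\hh \subset \hh^c$. Then for every $g \in Isom(X)$ one has $g\hh \cap \hh \neq \emptyset$, and consequently, for any $g_i, g_j$, the translates $g_i\hh$ and $g_j\hh$ intersect (apply the non-flipping hypothesis to $g_i^{-1} g_j$ after left-translation). Thus $\{g\hh : g \in Isom(X)\}$ is a pairwise intersecting family of convex sets.

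Next I would pass to the Roller compactification $\bar X$. Each $g\hh$ lifts to a halfspace $g\tilde\hh$ of $\bar X$ via the monomorphism of poc sets $i_\HH$; work with their closures $\overline{g\tilde\hh}$. By Helly's property applied inside the median algebra $\bar X$, every finite intersection $\bigcap_{i} \overline{g_i\tilde\hh}$ is nonempty. Since $\bar X$ is compact, the finite intersection property yields
\[
A \;:=\; \bigcap_{g \in Isom(X)} \overline{g\tilde\hh} \;\neq\; \emptyset.
\]
The set $A$ is a nonempty closed convex subset of $\bar X$ that is, by construction, $Isom(X)$-invariant.

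To reach a contradiction with Roller minimality it then remains to show $A \subsetneq \bar X$. Here the thickness of $\hh$ is decisive: because $\mathrm{int}(\hh^c) \neq \emptyset$ in $X$, one picks $p$ in this interior and observes that $p$ lies in an open subset of $\bar X$ disjoint from $\tilde\hh$ (via Proposition \ref{prop_description_halfspace_arising_from_X}, which identifies $\tilde\hh$ as $\tilde\pi_{[a,b]}^{-1}(\hh \cap [a,b])$ and allows one to transport the openness of $\hh^c$ at $p$ into $\bar X$). In particular $p \notin \overline{\tilde\hh}$, so $A \subseteq \overline{\tilde\hh} \subsetneq \bar X$. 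This contradicts Roller minimality and completes the argument.

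The main obstacle in executing this plan is verifying that the lift-then-close operation genuinely produces a proper subset of $\bar X$; concretely, one must ensure that the thickness of $\hh$ in $X$ survives the passage to the Roller compactification so that $\overline{\tilde\hh} \neq \bar X$. Once this topological point is settled, the remainder is a straightforward combination of Helly, compactness, and Roller minimality. Note that Roller non-elementarity is not actually needed in the argument---Roller minimality already precludes fixed points in $\bar X$ whenever $X$ has more than one point---but it is natural to include it as part of the standing hypotheses.
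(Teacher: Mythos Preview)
The paper does not supply its own proof of this proposition; it is quoted directly from Fioravanti (Corollary~5.4 of \cite{Fior_tits}). Your approach is precisely the Caprace--Sageev flipping argument adapted to the Roller compactification, which is the route taken in that reference, so there is nothing to compare at the level of strategy.

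Your argument is correct. The one point you flag as an obstacle---that thickness of $\hh$ forces $\overline{\tilde\hh}\subsetneq\bar X$---can be settled with tools already in the paper, and it is worth recording the fix. Since $\mathrm{int}(\hh^c)\neq\emptyset$, pick $q$ there; then $q\notin\overline{\hh}$ (closure in $X$). As in the proof of Proposition~\ref{prop_Borel_structure_Roller}, the closed convex set $\overline{\hh}$ is the intersection of the minimal closed halfspaces containing it (Proposition~\ref{prop_minimal_halfspace_countable}), so there is a \emph{closed} halfspace $\hh'$ of $X$ with $\overline{\hh}\subseteq\hh'$ and $q\notin\hh'$. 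By Proposition~\ref{prop_description_halfspace_arising_from_X}, $\tilde{\hh'}=\tilde\pi_{[a,b]}^{-1}(\hh'\cap[a,b])$ is the preimage of a closed set under a continuous map, hence closed in $\bar X$. Then $\overline{\tilde\hh}\subseteq\tilde{\hh'}\subsetneq\bar X$, and your contradiction with Roller minimality goes through. So the obstacle is genuine but routine; you should simply insert this step rather than leave it as a caveat.

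Your side remark is accurate: the argument as written uses only Roller minimality, not Roller non-elementarity, and in fact not irreducibility either. Irreducibility becomes relevant only for the \emph{strict} flipping $d(g\hh,\hh)>0$, as the paper notes immediately after the statement.
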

Note that here we slightly change the definition of flipping to allow the case $g.\hh=\hh^c$, as in the irreducible case one can show that, under the Roller minimality condition, any halfspace is flippable in the original sense (i.e. there exists $g\in Isom(X)$ such that $d(g.\hh,\hh)>0$).
 \begin{rmk} \label{rmk_construction_regular_point}

Let $X$ be an irreducible complete median space of finite rank such that $Isom(X)$ acts Roller non elementarily and Roller minimally. \par 
If $\hh_1$ and $\hh_2$ are two strongly separated halfspaces and $g_1,g_2\in Isom(X)$ are two elements that flip $\hh_1^c$ and $\hh_2^c$, respectively. Then the isometry $g_1\circ g_2$ is a rank 1 element which its ``translation axe" skewers through $\hh_1^c$ and $g.\hh_1$. Its contracting and repulsive points are respectively given by 
 \begin{eqnarray*}
     \tilde{x}&=&\bigcap_{i\in\NN}g^i.\tilde{\hh}_1, \\
     \tilde{y}&=&\bigcap_{i\in\NN}g^{-i}.\tilde{\hh}^c_1,
 \end{eqnarray*}
and are regular points.

 \end{rmk}

The following is a useful characterization of the regular points in $\bar X$.
\begin{lem}\label{lem_characterization_regular_points}
    Let $\tilde{x}\in \partial X$, $x_0\in X$, $r>0$ and assume that for any $n\in\NN$, there exists $\hh_1,..,\hh_n$ such that $x_0\in\hh_1^c$, $\tilde{x}\in\tilde{\hh}_n$ where $\hh_i^c$ and $\hh_{i+1}$ are strongly separated with $d(\hh_i^c,\hh_{i+1})>r$ for any $i\in\{1,2,\cdots,n\}$. \par 
    Then, $\tilde{x}$ is a regular point.
\end{lem}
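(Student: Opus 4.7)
The plan is to build an infinite descending chain from the arbitrarily long finite chains provided by the hypothesis, by an induction with a pigeonhole step at each stage. For each $n$, fix a chain $(\hh_1^{(n)},\dots,\hh_n^{(n)})$ satisfying the hypothesis. I construct the target infinite chain $(\ttt_j)_{j\in\NN}$ inductively, maintaining the invariant that infinitely many of the hypothesized chains begin, top-down, with $\ttt_1\supseteq\cdots\supseteq\ttt_k$ (the base case $k=0$ being the hypothesis itself).

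For the inductive step, I pass to the infinite subfamily of chains whose first $k$ entries are $(\ttt_1,\dots,\ttt_k)$ and consider their $(k+1)$-th halfspace $\hh_{k+1}^{(n)}$. To apply pigeonhole I replace each $\hh_{k+1}^{(n)}$ by a halfspace $\bb\subseteq\ttt_k$ maximal subject to containing $\hh_{k+1}^{(n)}$, being strongly separated from $\ttt_k^c$ at distance $\geq r$, and satisfying $\tilde x\in\tilde\bb$. A Zorn argument provides such a maximal enlargement: the union of an increasing chain of such halfspaces is still a halfspace, is still strongly separated from $\ttt_k^c$ (any transverse $\ell$ would be transverse to some element of the chain, contradicting strong separation there), retains distance $\geq r$ from $\ttt_k^c$, and its Roller closure still contains $\tilde x$. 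The key claim is that only finitely many such maximal halfspaces exist. Indeed, two distinct maximal candidates $\bb_1,\bb_2$ are incomparable by maximality; moreover they cannot be disjoint, since disjoint halfspaces of $X$ have disjoint Roller closures in $\bar X$ (a fact that follows directly from the median characterization of $\tilde\hh$), which would contradict $\tilde x\in\tilde\bb_1\cap\tilde\bb_2$. Hence any family of distinct maximal candidates is pairwise transverse, of cardinality at most $\mathrm{rank}(X)$. Pigeonhole yields a $\ttt_{k+1}$ occurring as the maximal enlargement of $\hh_{k+1}^{(n)}$ for infinitely many $n$, and replacing $\hh_{k+1}^{(n)}$ by $\ttt_{k+1}$ (a modification that is easily checked to preserve the strong separation of consecutive pairs throughout the chain) maintains the invariant.

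The resulting chain $(\ttt_j)_{j\in\NN}$ is descending, with $\tilde x\in\tilde\ttt_j$ and $\ttt_j^c,\ttt_{j+1}$ strongly separated at distance $\geq r$ for every $j$. Choosing the constant $r/2$, this witnesses the regularity of $\tilde x$.

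The principal obstacle is establishing the finiteness of the set of maximal enlargements at each inductive step. The delicate point is excluding the ``disjoint and incomparable'' case among maximal candidates, which I handle by exploiting that disjoint halfspaces have disjoint Roller closures. Once this is established, finite rank bounds the number of pairwise-transverse halfspaces and pigeonhole completes the extraction.
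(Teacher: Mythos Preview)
Your argument is essentially correct, but it takes a genuinely different route from the paper's.

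The paper argues much more directly. It considers the poset $\mathfrak{H}$ of all (finite or infinite) sequences $(\hh_1,\hh_2,\dots)$ with $x_0\in\hh_1^c$, $\tilde x\in\tilde\hh_i$, and consecutive pairs strongly separated at distance $>r$, ordered by extension. This poset is inductive, so Zorn gives a maximal element; the point is to show that any finite $(\hh_1,\dots,\hh_n)\in\mathfrak{H}$ can be extended. Since $d(x_0,\hh_n)\geq (n-1)r$, one picks $N$ large enough that the hypothesized chain $(\hh'_1,\dots,\hh'_N)$ has $d(x_0,\hh'_{N-1})>d(x_0,\hh_n)$. Because $\tilde x\in\tilde\hh_n\cap\tilde\hh'_{N-1}$, these two halfspaces intersect; the distance inequality then forces either $\hh'_{N-1}\subseteq\hh_n$ or transversality, and in either case strong separation of $\hh'^{\,c}_{N-1}$ and $\hh'_N$ yields $\hh'_N\subseteq\hh_n$, extending the chain. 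No pigeonhole, no maximal enlargements, and in particular no explicit use of the finite-rank hypothesis.

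Your approach is a diagonal extraction: fix one chain for every $n$, and at each level replace the $(k{+}1)$-th entries by maximal enlargements, which are pairwise transverse and hence number at most $\mathrm{rank}(X)$, so pigeonhole lets you pass to an infinite subfamily agreeing one level deeper. This is valid and has its own appeal as a compactness argument, but it is longer and leans on the rank bound in a way the paper's proof does not. One small point to tighten: at the base step $k=0$ there is no $\ttt_0$, so the phrase ``strongly separated from $\ttt_k^c$ at distance $\geq r$'' is undefined; you should say explicitly that the first enlargement is taken maximal subject only to $x_0\in\bb^c$ and $\tilde x\in\tilde\bb$ (your transversality argument for finiteness still goes through, since $x_0\in\bb_1^c\cap\bb_2^c$ rules out the case $\bb_1^c\cap\bb_2^c=\emptyset$).
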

\begin{proof}
    Let $\mathfrak{H}$ be the set of sequences of halfspaces satisfying the conditions stated in the lemma. We endow $\mathfrak{H}$ with the order given by the inclusion. Note that $\mathfrak{H}$ is an inductive set. By the definition of the regular point, it suffices to show that there exists a sequence in $\mathfrak{H}$ that is infinite. Hence, to prove the lemma it is enough to prove that maximal elements in $\mathfrak{H}$ are infinite, which is equivalent to show that any finite element in $\mathfrak{H}$ can be extended. Let us consider a sequence $(\hh_1,..,\hh_n)$ and note that $d(x_0,\hh_n)\geq (n-1)r$. According to the hypothesis of the lemma, there exists a sequence $(\hh_1',...,\hh_{N}')\in\mathfrak{H}$ where $N \in \NN$ is such that $(N-2)r>d(x_0,\hh_n)$, that is, $d(x_0,\hh_{N-1}')>d(x_0,\hh_n)$. As both $\tilde{\hh}_n$ and $\tilde{\hh}_{N-1}'$ contain $\tilde{x}$, $\hh_n$ and $\hh_{N-1}'$ are either transverse or $ \hh_{N-1}'\subseteq \hh_n$ . If $\hh_n$ and $\hh_{N-1}'$ are transverse, then we have $\hh_{N}'\subseteq \hh_n$ as $\hh'^c_{N-1}$ and $\hh'_{N}$ are strongly separated. In either case, we have $\hh_{N}'\subseteq \hh_n$. Hence, $(\hh_1,..,\hh_n,\hh_{N}')$ extends the sequence $(\hh_1,..,\hh_n)$ in $\mathfrak{H}$ which completes the proof.
\end{proof}

\begin{lem}\label{lem_separation_Roller_compactification}
    For any pair of distinct points $\tilde{x},\tilde{y}\in \bar{X}$, there exists a pair of points $x,y\in X$ such that $\HH_{\bar{X}}(x,y)\subseteq \mathring\HH_{\bar{X}}(\tilde{x},\tilde{y})\subseteq i_\HH(\HH(X))$. 
\end{lem}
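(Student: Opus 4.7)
The plan is to choose $x,y$ in $X$ as gates of $\tilde x,\tilde y$ onto an interval of $X$ transverse to a halfspace separating them, and then to verify both inclusions by exploiting the saturation identity of Proposition \ref{prop_description_halfspace_arising_from_X} together with the same-component criterion of Lemma \ref{lem_halfspace_in_component}. First, I would use Lemma \ref{lem_separation_tilde_h} to pick $\hh_0\in\HH(X)$ with $\tilde x\in\tilde{\hh}_0^c$ and $\tilde y\in\tilde{\hh}_0$; fix arbitrary $a\in\hh_0^c$, $b\in\hh_0$ in $X$, set $I:=[a,b]$, and define $x:=\tilde\pi_I(\tilde x)$ and $y:=\tilde\pi_I(\tilde y)$, which lie in $X$. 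Convexity of the two sides of $\tilde{\hh}_0$ forces the intervals $[\tilde x,a]$ and $[\tilde y,b]$ into $\tilde{\hh}_0^c$ and $\tilde{\hh}_0$ respectively, so $x\in\hh_0^c$, $y\in\hh_0$, and in particular $x\neq y$.

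For the first inclusion, fix $\ttt\in\HH_{\bar X}(x,y)$. Since $i(x)$ and $i(y)$ lie on opposite sides of $\ttt$, the halfspace $\ttt$ is transverse to $i(X)$, and Proposition \ref{prop_description_halfspace_arising_from_X} gives $\ttt=\tilde{\hh}'$ for some $\hh'\in\HH(X)$ together with the saturation $\ttt=\tilde\pi_{[x,y]}^{-1}(\ttt\cap[x,y])$ (applied to the interval $[x,y]$ which is transverse to $\hh'$). Because $x,y$ both lie in the principal component $i(X)$ of $\bar X$, Lemma \ref{lem_halfspace_in_component} combined with $\ttt\in i_\HH(\HH(X))$ yields $\ttt\in\mathring\HH_{\bar X}(x,y)$, i.e.\ $i(x)\in\mathrm{int}(\ttt^c)$ and $i(y)\in\mathrm{int}(\ttt)$. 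The gate identity $\tilde\pi_{[x,y]}(\tilde x)=i(x)$ (which follows from the nested inclusion $[x,y]\subseteq I$ and the retract property of $\tilde\pi_I$) then shows that $\tilde\pi_{[x,y]}^{-1}(\mathrm{int}(\ttt^c)\cap[x,y])$ is an open subset of $\bar X$, contained in $\ttt^c$ by the saturation identity and containing $\tilde x$; hence $\tilde x\in\mathrm{int}(\ttt^c)$, and by the symmetric argument $\tilde y\in\mathrm{int}(\ttt)$, so $\ttt\in\mathring\HH_{\bar X}(\tilde x,\tilde y)$.

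The second inclusion is almost immediate: any $\ttt\in\mathring\HH_{\bar X}(\tilde x,\tilde y)$ has both $\mathrm{int}(\ttt)$ and $\mathrm{int}(\ttt^c)$ non-empty open sets of $\bar X$, hence by density of $i(X)$ in $\bar X$ the halfspace $\ttt$ is transverse to $i(X)$, and Proposition \ref{prop_description_halfspace_arising_from_X} places it in $i_\HH(\HH(X))$.

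The main technical point I expect is the transfer of the strict-interior condition from the points $x,y\in X$ to the boundary points $\tilde x,\tilde y$: without the gate choice, any halfspace separating $x,y$ would still separate $\tilde x,\tilde y$ (by convexity of the two sides), but it is precisely the saturation identity $\ttt=\tilde\pi_{[x,y]}^{-1}(\ttt\cap[x,y])$ combined with continuity of the median retraction that upgrades ``separation'' to ``separation with interiors'' after pulling back to $\bar X$.
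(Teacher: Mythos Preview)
Your proof is correct and takes a genuine variant of the paper's route. The paper first projects $\tilde x,\tilde y$ onto an interval to get gates $x',y'\in X$, and then \emph{shrinks}: it picks disjoint closed convex neighborhoods $C_1,C_2\subseteq[x',y']$ of $x',y'$ and sets $x=\pi_{C_1}(y')$, $y=\pi_{C_2}(x')$, so that any $\hh\in\HH(x,y)$ already contains $C_1$ on one side and $C_2$ on the other, forcing $\hh\in\mathring\HH(x',y')$; the passage to $\mathring\HH_{\bar X}(\tilde x,\tilde y)$ is then asserted without further detail. You instead take $x,y$ to be the gates themselves and replace the shrinking trick by a direct appeal to Lemma~\ref{lem_halfspace_in_component}, which immediately gives $\HH_{\bar X}(x,y)\cap i_\HH(\HH(X))=\mathring\HH_{\bar X}(x,y)$; you then make the transfer to $\tilde x,\tilde y$ fully explicit via the saturation identity $\ttt=\tilde\pi_{[x,y]}^{-1}(\ttt\cap[x,y])$, the gate identity $\tilde\pi_{[x,y]}(\tilde x)=x$, and continuity of $\tilde m(x,y,\cdot)$. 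The two arguments are close in spirit (both ultimately pull back open sets along the median retraction onto an interval), but yours avoids the auxiliary pair $x',y'$, yields the slightly sharper choice $x=\tilde\pi_I(\tilde x)$, $y=\tilde\pi_I(\tilde y)$, and fills in the step the paper leaves implicit; the paper's shrinking, on the other hand, is more self-contained in that it does not invoke the same-component lemma.
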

\begin{proof}
    Note that for any $x,y\in X$, we have $\HH_{\bar{X}}(x,y)=i(\HH(x,y))$  (Proposition \ref{prop_description_halfspace_arising_from_X}). The points $\tilde{x}$ and $\tilde{y}$ being distinct, there exists an interval $I=[a,b]$ where $a,b\in X$, such that $x':=\pi_I(\tilde{x}) \in X $ and $y':=\pi_I(\tilde{y}) \in X$ are distinct. 
    Remark that we have the following inclusion $i(\HH(x',y'))\subseteq \HH_{\bar{X}}(\tilde{x},\tilde{y})$ (cf. Lemma \ref{lem_separation_tilde_h} ). \par
    Let us consider a closed convex neighborhood $C_1,C_2\subseteq [x',y']$ of $x'$ and $y'$ respectively such that $C_1\cap C_2 =\emptyset$ and set $x=\pi_{C_1}(y') \in X$ and $y=\pi_{C_2}(x') \in X$. Hence, we have $\HH(x,y)\subseteq \mathring\HH(x',y')$ and conclude that $i(\HH(x,y))\subseteq \mathring \HH_{\bar X}(\tilde{x},\tilde{y}) $.\par 
   The inclusion $\mathring\HH_{\bar{X}}(\tilde{x},\tilde{y})\subseteq i_\HH(\HH(X))$ comes from the fact that any open subset of $\bar X$ intersects $X$, hence, any halfspace in $\mathring \HH(\tilde{x},\tilde{y})$ is transverse to $X$, so it is induced from a halfspace in $\HH(X)$ (Proposition \ref{prop_description_halfspace_arising_from_X}).
\end{proof}

\subsection{Median core}\label{subsection_median_core}

In \cite{ChaIF} and \cite{Fior_superrigidity}, the superrigidity result for actions of lattices extends only on a median subspace that is invariant under the action. In general, one should not expect the extension to hold for the whole median space as described in  \cite[Example 4.7]{Fior_superrigidity}. The construction of the median subspace $Y$ is implicit in both \cite{ChaIF} and \cite{Fior_superrigidity}. In this subsection, we give an explicit construction of such median subspace that corresponds to the median closure of the regular boundary.

\begin{defi}[\textbf{\textit{Median core}}]\label{def_median_core}
    Let $X$ be a complete irreducible separable median space of finite rank. 
    Consider the following $Isom(X)$-invariant subset
  \[
    Y_0:=\{m_{\bar X}(\tilde{x}_1,\tilde{x}_2,\tilde{x}_3) \ |\ \tilde{x}_1,\tilde{x}_2,\tilde{x}_3\in  \partial_r X, \ x_i\neq x_j\}\subseteq X 
  \]
    and we define recursively $Y_{i+1}:= m(Y_i,Y_i,Y_i)$. We set $Y':=\displaystyle{\bigcup_{i\in\NN}Y_i}$ and remark that it is a $Isom(X)$-invariant median subspace.\par
   We call the closure of $Y'$ in $X$ the $\textbf{\textit{median core}}$ and denote the closure by $Y$.
\end{defi}

\begin{exm}
    Let $T$ be a simplicial tree. Fix an interval of any median space of finite rank and replace all edges of $T$ by such an interval to obtain a new median space $X$. In this case, the median core $Y$ of $X$ coincides with the vertices of $T$. One may obtain (depending on the interval) infinitely many isometries of $X$ that fix the vertices of $T$ and act as involutions along the intervals. Such isometries induce the identity on the Roller boundary of $X$. 
 \end{exm}
 
\begin{rmk}\label{remark_median_core}
  \begin{itemize}
      \item Any isometry of $X$ descends to an isometry of $Y$, and is uniquely determined by its restriction to $\partial_r X$.
      \item If $X$ is a complete irreducible median space of finite ranl, then its median core is also a complete irreducible median space of finite rank.
      \item  If $\Gamma$ acts Roller non elementarily and Roller minimally on $X$, then its induced action on the median core $Y$ is also Roller non elementary and Roller minimal. 
      \item Using the induction on an argument that is similar to the one used in the proof of \cite[Theorem B]{Messaci}, one can show that the stabilizer of any point $x\in Y'$ is open in $Isom(X)$ where $Y'$ is the median subspace arising in Definition \ref{def_median_core}. Here, $Isom(X)$ is endowed with the pointwise convergence topology.
  \end{itemize}
\end{rmk}

\subsection{\texorpdfstring{Countable $Isom(X)$-invariant median subspace and fundamental family}{Countable Isom(X)-invariant median subspace and fundamental family}} \label{subsection_fundamental_family}

The aim of this section is to construct a countable $Isom(X)$-invariant median subspace and a countable dense family of halfspaces. These sets are useful in showing measurability in the sequel. The density of the set of halfspaces is in the following sense.
\begin{defi}\label{def_fundamental_family}
    Let $X$ be a median space of finite rank. A \textbf{\textit{fundamental family of halfspaces}} $\HH'\subseteq\HH(X)$ is a countable family of halfspaces such that
    \begin{itemize}
     \item[(1)] For any $\tilde{x},\tilde{y}\in \bar{X}$, there exists $\hh\in\HH'$ such that $\tilde{\hh}\in\HH_{\bar{X}}(\tilde{x},\tilde{y})$.
        \item[(2)] For any thick halfspace $\hh\in\HH(X)$, there exists $\hh'\in \HH'$ such that $\hh\subseteq \hh'$.
    \end{itemize}
\end{defi}
We proceed as follows: from a dense countable subset of $X$, we construct a dense countable family of halfspaces. Using the latter family, we prove that the intersection of the median hull of the regular boundary with $X$ is countable.\par

\begin{prop}\label{prop_existence_fundamental_family}
   Any complete separable median space $X$ of finite rank contains a fundamental family of halfspaces.
\end{prop}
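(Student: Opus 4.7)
The plan is to construct $\HH'$ from a countable dense subset $D\subseteq X$, exploiting the finiteness of minimal closed halfspaces in finite-rank median spaces. For each ordered pair $(p,q)\in D^2$ with $p\neq q$, Proposition~\ref{prop_minimal_halfspace_countable_orig} together with a transversality argument (distinct minimal elements of $\HH(p,q)$ both contain $q$ and both avoid $p$, and minimality rules out nesting or inclusion in each other's complement, so they are pairwise transverse, bounded in number by $\mathrm{rank}(X)$) give a finite set $\mathcal{M}(p,q)$ of minimal closed halfspaces in $\HH(p,q)$. Setting
\[
\HH' \;:=\; \bigcup_{\substack{(p,q)\in D^2 \\ p\neq q}} \bigl(\mathcal{M}(p,q)\cup\{\hh^c:\hh\in\mathcal{M}(p,q)\}\bigr),
\]
one obtains a countable union of finite sets.

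For condition~(1), given distinct $\tilde x,\tilde y\in\bar X$, Lemma~\ref{lem_separation_Roller_compactification} produces $x,y\in X$ with $\HH_{\bar X}(x,y)\subseteq\mathring{\HH}_{\bar X}(\tilde x,\tilde y)$. Fix a closed $\hh_0\in\HH(x,y)$; Proposition~\ref{prop_topology_Roller_compactification} combined with the strict separation in $X$ produced by the proof of that Lemma guarantees that $\mathrm{int}(\hh_0^c)$ and $\mathrm{int}(\hh_0)$ are non-empty. Density of $D$ then allows one to pick $p\in D\cap\mathrm{int}(\hh_0^c)$ and $q\in D\cap\mathrm{int}(\hh_0)$, with $q$ chosen close to $y$ along the interval $[x,y]$, so that $\hh_0\in\HH(p,q)$. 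A Zorn-type extraction in the subposet of closed halfspaces of $\HH(p,q)$ contained in $\hh_0$ yields a minimal element $\hh^*\in\mathcal{M}(p,q)$ with $\hh^*\subseteq\hh_0$; the careful placement of $q$ close to $y$ ensures $y\in\hh^*$, so that $\hh^*\in\HH'\cap\HH(x,y)$ and its lift $\tilde{\hh}^*$ separates $\tilde x$ from $\tilde y$. Condition~(2) is handled dually: for a thick halfspace $\hh$, density gives $p\in D\cap\mathrm{int}(\hh^c)$ and $q\in D\cap\mathrm{int}(\hh)$, so that $\hh^c\in\HH(q,p)$; the analogous extraction inside $\hh^c$ produces $\ttt\in\mathcal{M}(q,p)$ with $\ttt\subseteq\hh^c$, whence $\hh':=\ttt^c\in\HH'$ satisfies $\hh\subseteq\hh'$.

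The main obstacle is the combined step of extracting a minimal closed halfspace inside a prescribed one while ensuring it retains the desired separation of $x,y$ (in condition~(1)) or remains on the correct side of $\hh$ (in condition~(2)). The poset of closed halfspaces is not automatically well-founded under inclusion, since infinite descending chains can occur and nested intersections need not remain halfspaces; the extraction must be controlled through the finite-rank hypothesis and the transversality arguments underpinning Propositions~\ref{prop_minimal_halfspace_countable_orig} and~\ref{prop_minimal_halfspace_countable}. Moreover, the alignment between $q$ and $y$ — enforced via the interval $[x,y]$ and the median structure around $y$ — is what ultimately ensures that the extracted minimal halfspace does not cut $y$ off from the $q$-side.
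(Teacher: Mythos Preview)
Your construction of $\HH'$ via minimal closed halfspaces in $\HH(p,q)$ for $p,q\in D$ is natural, and the transversality argument bounding $|\mathcal{M}(p,q)|$ by the rank is correct. However, the argument for condition~(1) has a genuine gap at the step asserting that ``the careful placement of $q$ close to $y$ ensures $y\in\hh^*$''. Metric proximity of $q$ to $y$ gives no control over whether a \emph{minimal} closed halfspace through $q$ also contains $y$, since minimality makes $\hh^*$ hug $q$, not $y$. Concretely, in $X=\RR^2$ with the $\ell^1$ metric and $D=\mathbb{Q}^2$, take $x=(0,0)$, $y=(1,0)$, $\hh_0=\{(a,b):a\geq\tfrac12\}$, $p=(0,0)$ and $q=(1+\varepsilon,\varepsilon)$ for small rational $\varepsilon>0$: then $q$ is arbitrarily close to $y$, yet the unique element of $\mathcal{M}(p,q)$ contained in $\hh_0$ is $\{(a,b):a\geq 1+\varepsilon\}$, which excludes $y$. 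The qualifier ``along the interval $[x,y]$'' would repair this particular instance (choosing $q=(1-\varepsilon,0)$ works), but in general $D$ need not meet $[x,y]$ at all --- e.g.\ $D=\{(a,b)\in\mathbb{Q}^2: b\neq 0\}$ is still dense in $\RR^2$ but misses $[x,y]$ entirely. Your closing paragraph flags this as ``the main obstacle'', but what is offered there is a restatement of the desired conclusion rather than an argument.

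The paper's proof sidesteps the issue by anchoring halfspaces not at a \emph{point} $q$ but at a \emph{ball}: for $y'\in D$ close to $y$ and $n$ large, it uses the family of minimal closed halfspaces containing $\overline{Conv(B(y',\tfrac1n))}$. The parameters are chosen so that this convex set contains $y$ and avoids $x$; any halfspace containing the set then automatically contains $y$, and one separating the set from $x$ lies in $\HH(x,y)$. Countability of each such family comes from Proposition~\ref{prop_minimal_halfspace_countable} (no rank bound is needed), and condition~(2) is then deduced from condition~(1) applied to the interval $[\pi_{\bar{\hh}}(x),x]$ for some $x\in D\cap\hh^c$.
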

\begin{proof}
Fix a dense countable subset $Y\subseteq X$. For each $x\in Y$ and $n\in\NN_{\geq 1}$ we set the following subset of halfspaces
\[
\HH_{x,n}:=\{\hh\in\HH(X)\ | \ Conv(B(x,\frac{1}{n}))\subseteq \hh \ \text{with} \ \hh \ \text{closed and minimal}\}
\]
The subsets $\HH_{x,n}$ are countable by the separability of $X$ (Proposition \ref{prop_minimal_halfspace_countable}). We set 
\[
\HH':=\bigcup_{\substack{x\in Y \\ n\in\NN^*}}\HH_{x,n}
\]
 and claim that the subset $\HH'$ is a fundamental family.\par
 Let us first show that any interval that is distinct from a point is separated by a halfspace of $\HH'$. \par 
 We fix $\tilde{x} \neq \tilde{y}\in \bar{X}$. Then there exists an interval $I=[a,b]$ with $a, b \in X$, such that $x:=\pi_I(\tilde{x})$ is distinct from $y:=\pi_I(\tilde{y})$. Let us consider a point $y'\in Y$ which is close enough to $y$ (if $y$ is isolated then $y\in Y$, in which case we choose $y'$ to be $y$) and consider $n\in\NN_{\geq 1}$ such that $C:=\bar{Conv(B(y',\frac{1}{n}))}$ contains $y$ and does not contain $x$. Then there exists a halfspace $\hh\in\HH_{y',n}$ which separates $C$ from $x$, that is $\hh\in\HH(x,y)$. This implies $\tilde{\hh}\in\HHb(\tilde{x},\tilde{y})$ and finishes the first claim of the proposition.\par 
 Then we prove that $\HH'$ defined above satisfies (2) in Definition \ref{def_fundamental_family}. As the halfspace $\hh$ is thick, there exists $x\in Y\cap\hh^c$. Any halfspace that separates $x$ from $\pi_{\bar{\hh}}(x)$ separates $x$ from $\bar{\hh}$. Hence, any halfspace in $\HH(x,\pi_{\bar{\hh}}(x))$ contains $\hh$. By the first claim, the family $\HH'$ intersects any interval, particularly $[\pi_{\bar{\hh}}(x),x]$, which completes the proof.
\end{proof}

\begin{rmk}\label{rmk_fundamental_family}
For a complete separable median space of finite rank, we remark the following:
\begin{itemize}
    \item  The fundamental family of halfspaces constructed in Proposition \ref{prop_existence_fundamental_family} contains the subset of atoms of $\HH(X)$, i.e. the subset of halfspaces $\hh\in\HH(X)$ such that $\nu(\{\hh\})>0$. Hence, the subset of atoms in $\HH(X)$ is countable.

    \item If $\Gamma$ is a countable group acting on $X$ by isometries, the fundamental family can be chosen to be $\Gamma$-invariant. In the proof of Proposition \ref{prop_existence_fundamental_family}, one can choose the dense countable subset $Y$ to be $\Gamma$-invariant up to taking $\Gamma.Y$ instead and use the same construction to obtain a $\Gamma$-invariant fundamental family.
\end{itemize}
    
\end{rmk}

\begin{thm}\label{thm_countable_medians_subspace}
    The median subspace $Y'$ arising in Definition \ref{def_median_core} is a countable $Isom(X)$-invariant median subspace.
\end{thm}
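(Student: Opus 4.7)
The $Isom(X)$-invariance and median-subalgebra properties of $Y'$ are immediate from its recursive definition: $\partial_r X$ is $Isom(X)$-invariant, the median operation on $\bar X$ is equivariant, and the construction $Y_{i+1}=m(Y_i,Y_i,Y_i)$ is preserved by isometries and by the median. The entire substance of the theorem is therefore the countability of $Y'$.

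Since the median hull of a countable subset of a median algebra remains countable at each stage of iteration (at step $i+1$ one forms countably many medians of triples from a countable set), it suffices to prove that $Y_0$ itself is countable.

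The plan is to encode each $y=m(\tilde{x}_1,\tilde{x}_2,\tilde{x}_3)\in Y_0$ by finite combinatorial data drawn from the countable fundamental family $\HH'$ of Proposition \ref{prop_existence_fundamental_family} together with a countable dense subset $D\subseteq X$. Given three distinct regular points, their defining descending chains of strongly separated halfspaces should be refined, via Property 2 of $\HH'$ and the inductive construction underlying Lemma \ref{lem_characterization_regular_points}, to a sextuple $\hh_j,\hh_j'\in\HH'$, $j=1,2,3$, satisfying: $\hh_1,\hh_2,\hh_3$ are pairwise strongly separated; $\hh_j'\subseteq\hh_j$ with $\hh_j'$ and $\hh_j^c$ strongly separated; and $\tilde{x}_j\in\tilde{\hh}_j'$. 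The key technical lemma to prove is then that for any choice of $p_j\in D\cap\hh_j'$, one has $m(\tilde{x}_1,\tilde{x}_2,\tilde{x}_3)=m(p_1,p_2,p_3)$. This is verified halfspace by halfspace: for any $\ttt\in\HH(X)$, pairwise strong separation of the outer $\hh_j$'s forces $\ttt$ to be transverse to at most one of them, and the inner strong separation between $\hh_j'$ and $\hh_j^c$ then forces $\tilde{x}_j$ and $p_j$ to lie on the same side of $\ttt$ even in that exceptional case. Consequently the majority-vote characterization of the median yields the same element on each $\ttt$, and Lemma \ref{lem_separation_tilde_h} concludes the equality in $\bar X$.

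Once the key lemma is in hand, $Y_0\subseteq\{m(p_1,p_2,p_3):(p_1,p_2,p_3)\in D^3\}$ is countable, and so is $Y'$. The hardest part will be the joint construction of the sextuple $(\hh_j,\hh_j')_{j=1,2,3}$ inside $\HH'$ realizing all the required strong separation relations simultaneously: along-chain strong separation is immediate from the definition of regular points, but securing pairwise strong separation of the outer $\hh_j$'s across the three different chains will likely require combining the Flipping Lemma \ref{lem_flipping_lemma} with Property 2 of $\HH'$ and a careful chain refinement using Lemma \ref{lem_characterization_regular_points}.
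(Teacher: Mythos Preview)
Your approach is correct and follows the same route as the paper: reduce to countability of $Y_0$, then encode each median point by finite data from the fundamental family $\HH'$. The paper is simply more economical. It observes directly that for three distinct regular points one may choose three pairwise strongly separated halfspaces $\hh_1,\hh_2,\hh_3$ (obtained by going far enough down the defining chains) with the property that $m(\tilde y_1,\tilde y_2,\tilde y_3)$ is constant for all $\tilde y_i\in\tilde\hh_i$; then the second defining property of $\HH'$ lets one replace each $\hh_i$ by a member of $\HH'$ containing it, and one gets a surjection from a subset of $(\HH')^3$ onto $Y_0$. Your double layer $\hh_j'\subseteq\hh_j$ and the auxiliary dense set $D$ are not wrong, but they are unnecessary once you know the triple of halfspaces already pins down the median. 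Your concern that securing pairwise strong separation across the three chains might require the Flipping Lemma is also unfounded: if $\hh^{(i)}_{n}$ and $\hh^{(j)}_{m}$ are merely disjoint (automatic for large indices since $\tilde x_i\neq\tilde x_j$), then passing to $\hh^{(j)}_{m+1}$ already yields strong separation from $\hh^{(i)}_{n}$, because any halfspace transverse to both $\hh^{(i)}_{n}$ and $\hh^{(j)}_{m+1}$ would be transverse to $(\hh^{(j)}_{m})^c\supseteq\hh^{(i)}_{n}$ and to $\hh^{(j)}_{m+1}$, contradicting the along-chain strong separation built into the definition of regular points.
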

\begin{proof}
  Let us fix a fundamental family $\HH'$ and remark that it is enough to prove that $Y_0$ is countable.\par 
Let $\tilde{x},\tilde{y},\tilde{z}\in\partial_r X$ be three distinct points in the regular boundary. There exist then three strongly separated halfspaces $\hh_1,\hh_2,\hh_3$ and $m\in X$ such that $\tilde{x}_i\in\tilde{\hh}_i$ such that for any $\tilde{y}_i\in\tilde{\hh}_i$ we have $m(\tilde{y}_1,\tilde{y}_2,\tilde{y}_3)=m$. By the density of the fundamental family $\HH'$, we can assume that the halfspaces $\hh_1,\hh_2,\hh_3$ are in $\HH'$. Hence, we have a surjection from a subset of $\HH'^3$ onto $Y_0$, implying that the latter is countable which completes the proof. 
\end{proof}

By Theorem \ref{thm_countable_medians_subspace} and Proposition \ref{prop_existence_fundamental_family}, we deduce the following.
\begin{cor}
    If $X$ be a complete separable median space of finite rank, then the restriction of the action of $Isom(X)$ on the median core $Y$ contains an $Isom(X)$-invariant fundamental family. 
\end{cor}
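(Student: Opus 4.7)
The plan is to run the construction of Proposition~\ref{prop_existence_fundamental_family} inside $Y$ itself, taking as the dense countable subset the $Isom(X)$-invariant median subspace $Y' \subseteq Y$ provided by Theorem~\ref{thm_countable_medians_subspace}. The $Isom(X)$-invariance of the resulting family of halfspaces will then be forced by the invariance of $Y'$, which is the only new feature compared with Proposition~\ref{prop_existence_fundamental_family}.

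First I would verify that $Y$ falls within the scope of Proposition~\ref{prop_existence_fundamental_family}: completeness comes from $Y$ being closed in $X$; separability from the fact that $Y$ is by definition the closure of the countable set $Y'$; and the rank is bounded by that of $X$ since $Y$ is a median subalgebra. Moreover, every $g \in Isom(X)$ restricts to an isometry of $Y$ (as noted in the remark following Definition~\ref{def_median_core}), so there is a well-defined action $Isom(X) \curvearrowright Y$.

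Next I would replicate the construction verbatim in $Y$: for every $y \in Y'$ and every $n \in \NN_{\geq 1}$, set
\[
\HH_{y,n} := \{\hh \in \HH(Y) \mid Conv(B_Y(y, 1/n)) \subseteq \hh,\ \hh \text{ closed and minimal with this property}\},
\]
and put $\HH'' := \bigcup_{y \in Y',\, n \in \NN_{\geq 1}} \HH_{y,n}$. By Proposition~\ref{prop_minimal_halfspace_countable} each $\HH_{y,n}$ is countable, hence so is $\HH''$. The argument of Proposition~\ref{prop_existence_fundamental_family} uses only the density of the chosen countable subset, so it applies unchanged here with $Y'$ in the role of the ambient dense set: $\HH''$ separates distinct points of $\bar Y$ and dominates every thick halfspace of $Y$, and is therefore a fundamental family for $Y$.

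Finally I would check $Isom(X)$-invariance. For any $g \in Isom(X)$, any $y \in Y'$, and any $n \in \NN_{\geq 1}$, the isometry property gives $g \cdot B_Y(y, 1/n) = B_Y(gy, 1/n)$, hence $g \cdot Conv(B_Y(y, 1/n)) = Conv(B_Y(gy, 1/n))$; and since $\hh \mapsto g\hh$ preserves the property of being a closed minimal halfspace containing a given convex subset, we obtain $g \cdot \HH_{y,n} = \HH_{gy,n}$. The decisive input is that $gy \in Y'$ because $Y'$ is $Isom(X)$-invariant by Theorem~\ref{thm_countable_medians_subspace}; consequently $g \cdot \HH'' \subseteq \HH''$, and equality follows by applying the same to $g^{-1}$. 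I do not foresee any serious obstacle here: the corollary is essentially the combination of Theorem~\ref{thm_countable_medians_subspace} with Proposition~\ref{prop_existence_fundamental_family}, and the only thing to verify beyond those two statements is that the construction is functorial under isometries, which is automatic from the definitions.
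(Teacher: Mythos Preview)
Your proposal is correct and follows essentially the same approach as the paper: the paper's proof is a one-line reference to the fact that $Y$ is the closure of the countable $Isom(X)$-invariant subspace $Y'$ from Theorem~\ref{thm_countable_medians_subspace}, leaving implicit the application of Proposition~\ref{prop_existence_fundamental_family} with $Y'$ as the dense subset and the invariance check, whereas you spell out these steps explicitly.
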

\begin{proof}
    The median subspace $Y$ is taken to be the median closure of the regular boundary inside $X$ ($Y$ is the closure of the countable median subspace $Y'$ constructed in Theorem \ref{thm_countable_medians_subspace}).
\end{proof}
Using the fundamental family, one can easily prove that the regular boundary is measurable.
\begin{prop}\label{prop_regular_boundary_measurable}
    If $X$ is a complete separable median space of finite rank, then the regular boundary is measurable.
\end{prop}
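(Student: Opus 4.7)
The plan is to express $\partial_r X$ as a countable Boolean combination of Borel subsets of $\bar X$, using the countable fundamental family $\HH'$ from Proposition \ref{prop_existence_fundamental_family}. First, I would verify that for every $\hh \in \HH(X)$, the lifted halfspace $\tilde{\hh} \subseteq \bar X$ is Borel: by Proposition \ref{prop_description_halfspace_arising_from_X}, $\tilde{\hh} = \tilde{\pi}_{[a,b]}^{-1}(\hh \cap [a,b])$, the preimage of a Borel halfspace in the compact interval $[a,b]$ under the continuous retraction $\tilde{\pi}_{[a,b]}$. Up to enlarging $\HH'$ by adding complements, I may assume $\HH'$ is countable and closed under complementation.

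Next, fixing $x_0 \in X$, for each $r \in \mathbb{Q}_{>0}$ and $n \in \NN$ I would form the countable set
\[
P_n(r) := \bigl\{(\hh_1, \ldots, \hh_n) \in \HH'^{\,n} \,:\, x_0 \in \hh_1^c,\ \hh_{i+1} \subseteq \hh_i,\ \hh_i^c \text{ and } \hh_{i+1} \text{ strongly separated},\ d(\hh_i^c, \hh_{i+1}) > r \bigr\}
\]
and set $A_{n,r} := \bigcup_{(\hh_1,\ldots,\hh_n) \in P_n(r)} \tilde{\hh}_n$ and $A_r := \bigcap_n A_{n,r}$. Each $A_{n,r}$ is a countable union of Borel sets, so $\bigcup_{r \in \mathbb{Q}_{>0}} A_r$ is Borel in $\bar X$.

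The core assertion is then $\partial_r X = \partial X \cap \bigcup_{r \in \mathbb{Q}_{>0}} A_r$. The inclusion $\supseteq$ is immediate from Lemma \ref{lem_characterization_regular_points}, since every element of $A_r$ admits, by construction, $\HH'$-valued witnesses of every finite length. For $\subseteq$, given a regular point $\tilde{x}$ with witness sequence $(\hh_i)_i$ in $\HH(X)$ at rate $r_0$, I would replace $(\hh_i)$ by a sequence in $\HH'$ by choosing each $\hh'_i \in \HH'$ sandwiched between $\hh_{i+1}$ and $\hh_i$ (that is, $\hh_{i+1} \subseteq \hh'_i \subseteq \hh_i$), which is possible thanks to the density built into the fundamental family. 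This automatically gives the nesting $\hh'_{i+1} \subseteq \hh'_i$ and $\tilde{x} \in \tilde{\hh}'_n$.

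The hard part will be verifying that strong separation and a positive distance gap survive this approximation: strong separation is not automatically inherited when a halfspace shrinks, so a halfspace $\kk$ transverse to the approximating pair $((\hh'_i)^c, \hh'_{i+1})$ need not be transverse to the original pair $(\hh_i^c, \hh_{i+1})$. The plan is to exploit the room afforded by the $r_0$-gap by choosing each $\hh'_i$ so that its boundary lies at distance greater than $r_0/4$ from both $\hh_i^c$ and $\hh_{i+1}$, so that any hypothetical $\kk$ witnessing a failure of strong separation for the new pair would already contradict strong separation of the original pair. This should yield a witness in $\HH'$ of rate at least $r_0/2$, completing the proof.
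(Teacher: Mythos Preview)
Your overall strategy---write $\partial_r X$ as a countable Boolean combination of the Borel sets $\tilde\hh$ for $\hh$ in a fundamental family $\HH'$---is the paper's. The paper, however, allows the witnessing chain $\hh_1,\dots,\hh_p$ to range over all of $\HH(X)$ and only asks that some terminal $\hh\in\HH'$ with $\tilde x\in\tilde\hh$ satisfy $\hh\subseteq\hh_p$; this $\hh$ is produced directly from the first defining property of $\HH'$ (separate $\tilde x$ from its gate onto $\tilde\hh_p^c$). By contrast you demand that the whole chain lie in $\HH'$, and that is exactly what creates your ``hard part''.

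Your plan for that hard part does not work as stated. Suppose $\kk$ is transverse to both $\hh'_i$ and $\hh'_{i+1}$, with $\hh_{i+1}\subseteq\hh'_i\subseteq\hh_i$ and $\hh_{i+2}\subseteq\hh'_{i+1}\subseteq\hh_{i+1}$. From $\hh'_{i+1}\subseteq\hh_{i+1}\subseteq\hh'_i$ one checks that $\kk$ is transverse to $\hh_{i+1}$; but nothing forces $\kk$ to be transverse to $\hh_i$. Concretely, $\kk$ may lie entirely inside $\hh_i$, meeting both $\hh'_i$ and the region $(\hh'_i)^c\cap\hh_i$---transversality is a purely combinatorial relation, and no metric condition on where the ``boundary'' of $\hh'_i$ sits relative to $\hh_i^c$ rules this out. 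So you do not contradict the strong separation of $\hh_i^c$ and $\hh_{i+1}$.

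Your route can be salvaged by a combinatorial argument rather than a metric one: with the same sandwiching, the \emph{alternate} pairs $((\hh'_i)^c,\hh'_{i+2})$ are strongly separated, since any $\kk$ transverse to both is then transverse to $\hh_{i+1}$ and to $\hh_{i+2}$ (using $\hh'_{i+2}\subseteq\hh_{i+2}\subseteq\hh_{i+1}\subseteq\hh'_i$), contradicting the original strong separation of $\hh_{i+1}^c$ and $\hh_{i+2}$; so pass to the subchain $(\hh'_{2j})_j$. Note also that you glossed over why a sandwich $\hh_{i+1}\subseteq\hh'_i\subseteq\hh_i$ with $\hh'_i\in\HH'$ exists at all: the two properties in Definition~\ref{def_fundamental_family} give point-separation and outer approximation, not an immediate two-sided sandwich, so this too needs a short argument. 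The paper's formulation avoids both issues.
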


\begin{proof}
 Let us fix a fundamental family $\HH'$, a basepoint $x_0\in X$ and a value $r>0$. For each $p\in\NN$ let $\HH_{x_0,p}^r$ be the subset of halfspaces $\hh\in\HH(X)$ such that there exists halfspaces $\hh_1,..,\hh_p\in\HH(X)$ satisfying:
 \begin{itemize}
     \item $x_0\in \hh_1^c$ and $\hh\subseteq \hh_p$.
     \item $\hh_i^c$ and $\hh_{i+1}$ are strongly separated with $d(\hh_i^c,\hh_{i+1})>r$.
 \end{itemize}
 We set 
 \[ 
 A_p:=\bigcup_{\hh\in\HH_{x_0,p}^r\cap\HH'}\tilde{\hh}. 
 \]
 By Lemma \ref{lem_characterization_regular_points} and the density of the fundamental family, one concludes that 
 \[
 \partial_r X=\bigcap_{p\in\NN_{\geq 1}} A_p,
 \]
which completes the proof.
\end{proof}

\section{G-boundaries and cocycles}\label{section_strongboundaries_cocycles}

\subsection{Cocycles valued in the isometry group of a median space}

We now introduce necessary backgrounds on cocycles, focusing on cocycles which take values in the isometry group of a median space. The main goal is to give a geometric description of the non elementarity condition on the cocycle by the mean of the non elementarity of the induced action on the median space. One can skip this subsection and take Definition \ref{defi_non_elementary_practical_defi} as the definition of the non elementarity. The reader is referred to \cite{BaderFurman}, \cite{Furstenberg-Bourbaki} for related material.\par 
Let $G$ be a locally compact second countable group and let $(\Omega,\mu)$ be a standard probability space endowed with a measurable measure-preserving ergodic $G$-action (also called a $G$-space). When $G=G_1\times G_2$, we say that the action is irreducible if both $G_1$ and $G_2$ act ergodically on $\Omega$.  

\par Let $X$ be a complete median space of finite rank and let 
\[
\alpha:G\times \Omega\rightarrow Isom(X)
\]
be a \textbf{\textit{measurable cocycle}} (over the $G$ action on $(\Omega,\mu)$), that is, $\alpha$ is a measurable map such that for  all $g_1,g_2\in G$ we have $$\alpha(g_1g_2,\omega)=\alpha(g_1,g_2.\omega)\circ \alpha(g_2,\omega)$$ for almost all $\omega\in\Omega$. A cocycle is said to be \textbf{\textit{strict}} if the equality holds for all $g_1,g_2\in G,\ \omega\in \Omega$. By \cite[Appendix B]{Zimmer}, any cocycle $\alpha$ as above coincides almost everywhere with a strict cocycle, hence, we will always assume that cocycles are strict.\par 

Two cocycles $\alpha$ and $\beta$ are said to be \textbf{\textit{cohomologous}} or \textbf{\textit{equivalent}} if there is a measurable map $\phi: \Omega \to Isom(X)$ such that $\forall g \in G, \alpha(g,\omega) = \phi(g.\omega)^{-1}\circ\beta(g,\omega)\circ \phi(\omega)$ for almost all $\omega \in \Omega$.  \par 
 We say that a section $f:\Omega \rightarrow X$ is \textbf{\textit{$\alpha$-equivariant}} if for any $g\in G$ we have 
\[
f(g.\omega)=\alpha(g,\omega). f(\omega)
\]
for almost all $\omega\in\Omega$

In general, Roller minimality and Roller non elementarity are independent conditions. However, if the median space is not lineal, then Roller minimality implies Roller non elementarity (see \cite[Proposition 6.4]{Messaci}). A median space is \textbf{\textit{lineal}} if its Roller compactification is an interval. Note that if a median space is lineal, then any action is Roller elementary. In a non lineal median space the assumption of the flipping lemma with regard to an action is equivalent to the assumption on the action being Roller non elementary and Roller minimal. This motivates the following unified definition of Roller minimality and Roller non elementarity for cocycles (see \cite[Section 3]{Furstenberg-Bourbaki} and \cite[Appendix 7.b]{bader2006superrigidity}).
\begin{defi}\label{defi_non_elementary_practical_defi}
Let $X$ be a complete non lineal median space of finite rank.
    We say that a cocycle $\alpha:G\times\Omega\rightarrow Isom(X)$ is \textit{\textbf{non elementary}} if for any halfspace $\hh\in\HH(X)$ and any subset $A\subseteq \Omega$ of positive measure, there exists $g\in G$ and $\omega\in A \cap g^{-1}.A$ such that $\alpha(g,\omega).\hh\subseteq \hh^c$.
\end{defi}
Let us remark in what follows that in the case where the cocycle arises as the returning cocycle with regard to an action of a lattice, then the non elementarity coincides with the Roller non elementarity and Roller minimality of the action.\par 
Let $\Gamma\leq G$ be a lattice and let us fix a right fundamental domain $\mathfrak{D}\subseteq G$ for $\Gamma$. Endow the fundamental domain with the restriction of the Haar measure of $G$ normalized such that the volume of $\mathfrak{D}$ is one and consider the ergodic $G$-action on it given through the identification between  $\mathfrak{D}$ and $G/\Gamma$. The returning cocycle $ \alpha_{\mathfrak{D}}:G\times \mathfrak{D}\rightarrow\Gamma 
$ associates to each $g\in G$ and $\omega\in\mathfrak{D}$ the unique $\alpha_\mathfrak{D}(g,\omega)\in \Gamma$ such that $g.\omega.(\alpha(g,\omega))^{-1}\in \mathfrak{D}$.
\begin{prop}\label{prop_elementarity_cocycle_vs_homomorphism}
    Let $X$ be an irreducible non lineal median space of finite rank and let $\pi:\Gamma\rightarrow Isom(X)$ be a representation of $\Gamma$.\par 
    The cocycle $\pi\circ \alpha_\mathfrak{D}$ is non elementary if and only if the $\Gamma$-action is Roller minimal.
\end{prop}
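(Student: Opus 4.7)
The plan is to reduce both implications to the following simple criterion on $\pi(\Gamma) \leq Isom(X)$: \emph{every halfspace $\hh \in \HH(X)$ admits some $\gamma \in \Gamma$ with $\pi(\gamma).\hh \subseteq \hh^c$.} I would first show this criterion is equivalent to non-elementarity of $\pi \circ \alpha_\mathfrak{D}$. The ($\Rightarrow$) direction is immediate by specializing the definition to $A = \mathfrak{D}$. For ($\Leftarrow$), I would exploit the explicit form of the return cocycle: for any $\omega,\omega' \in \mathfrak{D}$ and any $\gamma \in \Gamma$, setting $g := \omega'\gamma\omega^{-1} \in G$ gives $g\omega = \omega'\gamma$, hence $g.\omega = \omega' \in \mathfrak{D}$ and $\alpha_\mathfrak{D}(g,\omega) = \gamma$. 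If $A \subseteq \mathfrak{D}$ has positive measure, picking any $\omega,\omega' \in A$ produces a pair $(g,\omega)$ with $\omega \in A \cap g^{-1}A$ realizing the flipping element $\gamma$, which verifies non-elementarity at $\hh$.

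For Roller minimality $\Rightarrow$ non-elementarity, since $X$ is non-lineal, Roller minimality of $\pi(\Gamma)$ forces its Roller non-elementarity (Proposition 6.4 of \cite{Messaci}, as recalled in the excerpt). The Flipping Lemma (Proposition \ref{lem_flipping_lemma}), applied to $\pi(\Gamma)$, then provides for each halfspace $\hh \in \HH(X)$ an element $\pi(\gamma)$ flipping it, so the criterion yields non-elementarity.

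For the forward implication I would argue contrapositively. Assume there is a proper closed convex $\Gamma$-invariant subset $C \subsetneq \bar X$. Pick $\tilde y \in \bar X \setminus C$ and set $\tilde y' := \pi_C(\tilde y) \in C$; this is well defined because closed convex subsets of the compact topological median algebra $\bar X$ are retracts. Since $\tilde y \neq \tilde y'$, Lemma \ref{lem_separation_tilde_h} furnishes $\hh \in \HH(X)$ with $\tilde y' \in \tilde\hh$ and $\tilde y \in \tilde\hh^c$. The retract property $\tilde y' \in [\tilde y,\tilde c]$ for every $\tilde c \in C$, combined with the convexity of $\tilde\hh^c$, forces $C \subseteq \tilde\hh$: if some $\tilde c \in C$ were in $\tilde\hh^c$, the whole interval $[\tilde y,\tilde c]$ would lie in $\tilde\hh^c$, contradicting $\tilde y' \in \tilde\hh$. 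Using $\Gamma$-invariance of $C$ together with the equivariance $\pi(\gamma)\cdot\tilde\hh = \widetilde{\pi(\gamma)\hh}$ of the lift, we obtain $C \subseteq \tilde{\gamma\hh}$ for every $\gamma \in \Gamma$. Hence no $\gamma$ can flip $\hh$, for otherwise $C \subseteq \tilde\hh \cap \tilde\hh^c = \emptyset$, which contradicts the criterion and thereby non-elementarity. The only real subtlety is the compatibility of the lift $\hh \mapsto \tilde\hh$ with the $Isom(X)$-action, which follows from the canonical extension of isometries to $\bar X$ together with Proposition \ref{prop_description_halfspace_arising_from_X}.
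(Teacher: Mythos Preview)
Your proof is correct and follows the paper's strategy: both reduce the statement to the criterion that every halfspace is flippable by $\pi(\Gamma)$, and both link this criterion to non-elementarity of the return cocycle by explicitly realizing a prescribed $\gamma\in\Gamma$ as $\alpha_{\mathfrak D}(g,\omega)$. Your construction $g=\omega'\gamma\omega^{-1}$ with $\omega,\omega'\in A$ is in fact a cleaner presentation of the paper's computation, which phrases the same idea as the existence of $g$ with $g.A\cap A.\gamma^{-1}\neq\emptyset$.

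The only notable difference is in the direction non-elementary $\Rightarrow$ Roller minimal. The paper dispatches it in one line by invoking the equivalence, stated just before Definition~\ref{defi_non_elementary_practical_defi}, that in a non-lineal irreducible space the conclusion of the Flipping Lemma characterizes Roller minimality (and hence Roller non-elementarity). You instead reprove the relevant half of that equivalence from scratch: from a proper $\Gamma$-invariant closed convex $C\subsetneq\bar X$ you produce a halfspace $\hh$ with $C\subseteq\tilde\hh$ via the gate projection and Lemma~\ref{lem_separation_tilde_h}, and observe that $\Gamma$-invariance then forbids flipping. This is a legitimate and self-contained argument; it buys independence from the cited equivalence at the cost of a few extra lines, while the paper's version is shorter only because that equivalence has already been recorded in the text.
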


\begin{proof}
Let us set $\tilde{\alpha}_\mathfrak{D}:=\pi\circ \alpha_\mathfrak{D}$ and remark that if $\tilde{\alpha}_\mathfrak{D}$ is non elementary, then any thick halfspace is flippable by some element of $\pi(\Gamma)$. \par 
Let us assume now that the $\Gamma$-action is Roller minimal and consider a thick halfspace $\hh\in\HH(X)$ with a subset $A\subseteq \mathfrak{D}$ of positive measure. There exists $\gamma\in \Gamma$ such that $\gamma.\hh\subseteq \hh^c$. As $G.A=G$, there exists $g\in G$ such that $g.A\cap A.\gamma^{-1} \neq \emptyset$. Hence, for any $\omega\in A\cap g^{-1}.A.\gamma^{-1}$, we have $\alpha_{\mathfrak{D}}(g,\omega)=\gamma$ which completes the proof.
\end{proof}

Let us show that elementarity of the cocycle implies the non existence of a fixed point for the induced $G$-twisted action on the space of sections from $\Omega$ to the median space $X$
\begin{prop}\label{prop_no_equivariant_section}
    If the cocycle $\alpha:G\times \Omega \rightarrow Isom(X)$ is non elementary, then there exists no $\alpha$-equivariant section from $\Omega$ to $\bar X$.
\end{prop}
\begin{proof}
    Let us proceed by contradiction and assume that there exists an $\alpha$-equivariant section $f:\Omega\rightarrow \bar X$. By Proposition \ref{prop_Theorem_B5}, there exists a $G$-invariant subset of total measure $\Omega_0\subseteq \Omega$ and a Borel map $$f_0:\Omega_0\rightarrow \bar X $$ that coincides almost everywhere with $f$ and such that for any $g\in G,\ \omega\in \Omega$ we have $f_0(g.\omega)=\alpha(g,\omega).f_0(\omega)$. \par 
    Let $\hh\in\HH(X)$ be a thick halfspace such that $A:=f_0^{-1}(\tilde{\hh})$ is of positive measure. The non elementarity assumption on the cocycle implies that there exist $g\in G$ and $\omega\in A\cap g^{-1}.A$ such that $\alpha(g,\omega).\hh\subseteq \hh^c$, which contradicts the $\alpha$-equivariant of the map.
\end{proof}

Note that in the definition of the $\alpha$-equivariance of a section $f:\Omega\rightarrow X$, the subset of total measure of $\Omega$ on which the equivariance equality holds depends on the element $g\in G$. In some places, it is more convenient to work with a section where such a subset of total measure does not depends on the elements of $G$ and is $G-$invariant. This issue is treated in the appendix of \cite{Zimmer} (Proposition B5) and is written in the context where the target space admit an action and the equivariance is with regard to the actioin of $G$. The same proof works in the context of equivariance by cocycle that we write here for the convenience of the reader.

\begin{prop}[Cocycle restatement of Proposition B5 \cite{Zimmer}]\label{prop_Theorem_B5}
Let $\alpha:\Omega\times G\rightarrow X$ be a cocycle as above and let $f:\Omega\rightarrow X$ be an $\alpha$-equivariant map. There exists then a $G$-invariant measurable subset $\Omega_0\subseteq \Omega$ of total measure and a measurable map $f_0:\Omega_0\rightarrow X$ that coincides almost everywhere with $f$ and such that for any $g\in G,\ \omega\in \Omega$ we have $f_0(g.\omega)=\alpha(g,\omega).f_0(\omega)$
     .
\end{prop}
\begin{proof}
    Set 
    \[
    \Omega_0:=\{\omega\in\Omega\ | \ (\alpha(g,\omega))^{-1}.f(g.\omega)\ \text{is essentially constant in}\ g \}
    \]
    \textit{Claim 1: $\Omega_0$ is $G$-invariant}\par 
    Let us fix $\omega\in \Omega_0$, $h\in G$ and show that $h.\omega\in \Omega_0$. For any $g\in G$, we have 
    \begin{eqnarray*}
        (\alpha(g,h.\omega))^{-1}f(g.h.\omega)&=& (\alpha(g,h.w)\circ\alpha(h,\omega)\circ(\alpha(h,\omega))^{-1})^{-1}.f(gh.\omega)\\
        &=&(\alpha(gh,\omega)\circ(\alpha(h,\omega))^{-1})^{-1}.f(gh.\omega)\\
        &=&(\alpha(h,\omega)\circ \alpha(gh,\omega)^{-1}).f(gh.\omega)\\
        &=&\alpha(h,\omega).(\alpha(gh,\omega)^{-1}.f(gh.\omega))
    \end{eqnarray*}
    which is essentially constant in $g$ and complete the proof of the first claim.\par 
    Define $f_0(\omega)$ to be equal to $(\alpha(g,\omega))^{-1}.f(g.\omega)$ for almost all $g\in G$. \par 
    \noindent\textit{Claim 2: $f_0$ is Borel.} \par Consider a probability measure $\mu$ on $G$ in the same measure class as the Haar measure and remark that 
    \[
    f_0(\omega)=\int_{G}(\alpha(g,\omega))^{-1}.f(g.\omega)d\mu 
    \]
    which is a Borel map by Fubini. (the idea is borrowed from the proof of \cite[Theorem B2]{Zimmer}).\par 
    \noindent\textit{Claim 3: $f_0(g.\omega)=\alpha(g,\omega).f_0(\omega)$ for every $g\in G,\ \omega\in \Omega_0$.}\par 
    Same as in Claim 1, fix $h\in G, \ \omega\in \Omega_0$ and remark that $f_0(h.\omega)$ is defined to be equal $(\alpha(g,h.\omega))^{-1}.f(gh.\omega)$ for almost all $g\in G$. As 
    \[
    (\alpha(g,h.\omega))^{-1}.f(gh.\omega)=\alpha(h,\omega).((\alpha(g.h,\omega)^{-1}.f(gh.\omega))
    \]
    for almost all $g\in G$, we deduce that $f_0(h.\omega)=\alpha(h,\omega).f_0(\omega)$, which completes the claim and finishes the proof.
\end{proof}

\subsection{\texorpdfstring{$G$-boundary}{G-boundary}}
Let $G$ be a locally compact second countable group. In the paper, we use the following definition of $G$-boundary from \cite{BaderFurman}.
\begin{defi}\label{def_G_boundary}
    Let $(B,\mu)$ be a standard probability space endowed with a measure class preserving $G$-action. We say that $(B,\mu)$ is a \textbf{\textit{$G$-boundary}} if 
    \begin{itemize}
        \item The action $G\curvearrowright B$ is \textbf{\textit{amenable}} in the sense of Zimmer.
        \item The diagonal action $G\curvearrowright B\times B$ is \textbf{\textit{isometrically ergodic}}, that is, if $(X,d)$ is a separable metric space endowed with a continuous isometric $G$-action then any $G$-equivariant measurable map $B\times B\rightarrow (X,d)$ is constant.
    \end{itemize}
\end{defi}
\begin{exm} (See \cite{BCFS} or \cite{BaderFurman})
    \begin{itemize}
        \item If $G$ is a finitely generated Gromov hyperbolic group, then its Gromov boundary endowed with the Patterson-Sullivan measure is a $G$-boundary. 
        \item If $G$ is a semisimple Lie group, then $G/P$, where $P$ is a minimal parabolic subgroup, endowed with the unique $G$-invariant measure class, is a $G$-boundary.
        \item If $G$ is any locally compact second countable group and $\mu$ is a symmetric, spread-out generating probability measure on $G$, then its Furstenberg-Poisson boundary is a $G$-boundary.
    \end{itemize}
\end{exm}

In subsection \ref{subsection_construction_boundary_map}, the isometric ergodicity of the action $G\curvearrowright B$ is used to prove the unicity of the boundary map. The following lemma will serve the same purpose in the context of cocycles.
\begin{lem}[See subsection 4.1 \cite{Guirardel-Horbez-Lecureux}]\label{lem_metric_ergodicity_cocycle}
    Let $X$ be a complete separable metric space and let $\alpha:G\times\Omega\rightarrow Isom(X)$. If $(\Theta ,\nu)$ is a standard probability space endowed with a measure class preserving $G$-action that is isometrically ergodic, then any $\alpha$-equivariant section $f:\Omega\times \Theta\rightarrow X$ gives rise to an $\alpha$-equivariant section $\hat{f}:\Omega\rightarrow X$.
\end{lem}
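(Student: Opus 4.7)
The plan is to use the isometric ergodicity of $G \curvearrowright \Theta \times \Theta$ to show that, for almost every $\omega \in \Omega$, the slice $f(\omega, \cdot) : \Theta \to X$ is essentially constant, and then to take $\hat f(\omega)$ to be that constant value.

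To set this up, I would equip the Polish space $L^0(\Omega, X)$ of classes of measurable maps $\Omega \to X$ with the convergence-in-measure metric
$$\delta(h_1, h_2) := \int_\Omega \min\bigl(d(h_1(\omega),\, h_2(\omega)),\, 1\bigr) \, d\mu(\omega),$$
and define on it the twisted $G$-action
$$(g \cdot h)(\omega) := \alpha(g, g^{-1}\omega) \cdot h(g^{-1}\omega).$$
The cocycle identity for $\alpha$ shows this is a $G$-action, and the $G$-invariance of $\mu$ together with the fact that $\alpha$ takes values in $Isom(X)$ shows it is isometric for $\delta$. Setting $\tilde f(\theta) := f(\cdot, \theta)$, the $\alpha$-equivariance of $f$, after substituting $\omega \mapsto g^{-1}\omega$, becomes the $G$-equivariance $\tilde f(g\theta) = g \cdot \tilde f(\theta)$ in $L^0(\Omega, X)$.

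Consequently, the map $\Psi : \Theta \times \Theta \to L^0(\Omega, X) \times L^0(\Omega, X)$ defined by $\Psi(\theta_1, \theta_2) := (\tilde f(\theta_1),\, \tilde f(\theta_2))$ is a $G$-equivariant measurable map into a separable metric $G$-space carrying the diagonal isometric action, so by isometric ergodicity it is essentially constant, say equal to $(c_1, c_2)$. Fubini then forces $\tilde f \equiv c_1$ and $\tilde f \equiv c_2$ almost everywhere on $\Theta$, hence $c_1 = c_2 =: \hat f$ and $f(\omega, \theta) = \hat f(\omega)$ for almost every $(\omega, \theta)$. The desired $\alpha$-equivariance $\hat f(g\omega) = \alpha(g, \omega) \hat f(\omega)$ is then read off directly from the identity $f(g\omega, g\theta) = \alpha(g, \omega) f(\omega, \theta)$ evaluated at a generic $\theta$.

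The principal obstacle, though entirely standard, is the Polish-space bookkeeping: verifying that $(L^0(\Omega, X), \delta)$ is Polish, that the twisted action is Borel and isometric, and that $\theta \mapsto f(\cdot, \theta)$ is Borel as a map into $L^0(\Omega, X)$ (which follows from joint measurability of $f$ via the Fubini-type passage from jointly measurable functions on $\Omega \times \Theta$ to measurable sections into $L^0$). Once these checks are in place, the argument reduces to a single application of isometric ergodicity.
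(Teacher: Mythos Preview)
The paper does not supply its own proof of this lemma; it simply cites \cite{Guirardel-Horbez-Lecureux}, \S4.1. Your approach via the twisted $G$-action on $L^0(\Omega, X)$ is indeed the standard one, and the overall strategy is correct, but there is one slip worth correcting.

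You invoke isometric ergodicity of $G \curvearrowright \Theta \times \Theta$, whereas the hypothesis only supplies isometric ergodicity of $G \curvearrowright \Theta$, and isometric ergodicity does not in general pass to the square. Fortunately the detour through $\Theta \times \Theta$ is entirely unnecessary: your map $\tilde f : \Theta \to L^0(\Omega, X)$, $\theta \mapsto f(\cdot,\theta)$, is already a $G$-equivariant measurable map into a separable metric space on which $G$ acts by isometries, so isometric ergodicity of $\Theta$ alone forces $\tilde f$ to be essentially constant, equal to some $\hat f \in L^0(\Omega, X)$. Everything you wrote after that point (reading off $\alpha$-equivariance of $\hat f$ from that of $f$) goes through unchanged. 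So the fix is to delete the map $\Psi$ and apply the hypothesis directly to $\tilde f$.

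One further point you correctly flag under ``Polish-space bookkeeping'' but which deserves an explicit word: the paper's definition of isometric ergodicity asks for a \emph{continuous} isometric $G$-action on the target, and continuity of the twisted action $(g \cdot h)(\omega) = \alpha(g, g^{-1}\omega)\,h(g^{-1}\omega)$ on $L^0(\Omega, X)$ is not automatic from mere measurability of $\alpha$. In the literature this is handled either by working with the weaker notion that only requires the action to be by Borel isometries, or by a routine approximation; either way it is worth a sentence rather than being absorbed silently.
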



\section{The barycenter map}\label{section_barycenter_map}

 The aim of the section is to show that one can associate measurably a ``center of mass" to each probability measure on the Roller compactification $\bar{X}$ of a median space $X$. It is a essential ingredient in the construction of the boundary map, see Section \ref{section_superrigidity_of_homomorphisms}. The way to characterize the ``center of mass" is through halfspaces. Each halfspace having measure bigger than $\frac{1}{2}$ contains the center of mass. The center of mass is then defined to be the intersection of all closed halfspaces having a measure bigger than $\frac{1}{2}$. Hence, the center of mass is a closed convex subset of $\bar X$. More concretely, let $X$ be a complete separable median space of finite rank on which $Isom(X)$ acts Roller non elementarily and Roller minimally. We denote by $\PP(\bar X)$ the space of probability measures on $\bar X$ endowed with the Borel structure with regard to the weak$^*$ convergence. Using the same notation as in \cite{ChaIF}, we associate to each probability measure $\mu\in\PP(\bar{X})$ the following subsets of halfspaces 
\begin{eqnarray*}
\HH_\mu^+ &:=& \{\hh\in\HH(X)\ | \ \mu(\hh)>\frac{1}{2}\},\\
\HH_\mu^e &:=& \{\hh\in \HH(X)\ | \ \mu(\hh)=\frac{1}{2}\}.
\end{eqnarray*}
\begin{defi}
    
The center of mass of $\mu$ is
$$
C_\mu     := \bigcap_{\substack{\hh\in\HH_\mu^+ \\ \tilde{\hh} \ \text{closed}}} \tilde{\hh}
$$
where $\tilde{h}$ is the canonical halfspace of $\bar X$ associated to $\hh$ (see subsection \ref{subsection_Roller}).
\end{defi} 
Note that, by Helly's property and by the compactness of $\bar X$, the center of mass $C_\mu$ is a non empty closed convex subset. \par 

The subset of measures that will be of interest are those that have a single point as a center of mass. We denote this subset of measures by
\[
\PP_1(\bar X):=\{\mu\in\PP(\bar X)\ |\ C_\mu\ \text{is a singleton}\ \}
\] 
We denote by $\PP_2(\bar X)$ the complement of $\PP_1(\bar X)$ in $\PP(\bar{X})$, that is, the subset of measures having a non singleton barycenter.\par 
In the next sections, we will show that the boundary map will take image in fact inside $\PP_1(\bar X)$. To do so, we need the following criterion to determine when $C_{\mu}$ is a singleton for $\mu \in \PP(\bar{X})$.
\begin{prop}\label{prop_interval_balanced_halfspaces}
    Let $\mu \in \PP(\bar{X})$. The closed convex subset $C_\mu$ is a singleton if and only if $\HH_\mu^e$ contains no halfspace-intervals, that is, there is no pair $x \neq y$ in $X$ such that $\HH(x,y)\subseteq \HH_\mu^+$.
\end{prop}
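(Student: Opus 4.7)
The plan is to prove each direction of the equivalence by contraposition, leveraging the definition of $C_\mu$ as an intersection of closed lifted halfspaces, together with the compactness and Helly's property of the topological median algebra $\bar X$. I will work with the formulation in terms of $\HH_\mu^e$ (the ``balanced'' halfspaces), since a pair $\HH(x,y) \subseteq \HH_\mu^+$ together with the complementary symmetry $\mu(\hh)+\mu(\hh^c)=1$ naturally forces equality to $\tfrac12$ in the relevant arguments.

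For the forward direction, I pick two distinct points $\tilde u, \tilde v \in C_\mu$ and use Lemma~\ref{lem_separation_Roller_compactification} to find $x \neq y \in X$ with $\HH_{\bar X}(x,y) \subseteq \mathring\HH_{\bar X}(\tilde u, \tilde v)$. For each $\hh \in \HH(x,y)$ the lift $\tilde\hh$ strictly separates $\tilde u$ from $\tilde v$, so its closure $\overline{\tilde\hh}$ in $\bar X$ -- which, by Proposition~\ref{prop_description_halfspace_arising_from_X} and the continuity of $\tilde\pi_{[a,b]}$, coincides with the lift $\widetilde{\bar\hh}$ of the closure $\bar\hh$ of $\hh$ in $X$ -- contains $\tilde v$ but not $\tilde u$. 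Were $\bar\hh \in \HH_\mu^+$, the definition of $C_\mu$ would force $C_\mu \subseteq \overline{\tilde\hh}$, contradicting $\tilde u \in C_\mu$. Hence $\mu(\tilde\hh) \leq \mu(\overline{\tilde\hh}) \leq \tfrac{1}{2}$; the symmetric argument applied to $\hh^c$ yields $\mu(\tilde{\hh^c}) \leq \tfrac{1}{2}$, and since $\mu(\tilde\hh) + \mu(\tilde{\hh^c}) = 1$ one concludes $\hh \in \HH_\mu^e$, so $\HH(x,y) \subseteq \HH_\mu^e$.

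For the reverse direction, I assume $\HH(x,y) \subseteq \HH_\mu^e$ for some $x \neq y \in X$ and aim to show that the retraction $\pi := \tilde\pi_{[x,y]} : \bar X \to [x,y]$ satisfies $\{x,y\} \subseteq \pi(C_\mu)$, which immediately rules out $C_\mu$ being a singleton. The hypothesis, combined with Proposition~\ref{prop_description_halfspace_arising_from_X}, forces the pushforward $\pi_*\mu$ to assign mass $\tfrac{1}{2}$ to every ``$y$-side'' subinterval of $[x,y]$; letting these subintervals shrink to the endpoints gives $\pi_*\mu = \tfrac{1}{2}(\delta_x + \delta_y)$. Writing $\mu = \mu_x + \mu_y$ with $\mu_z := \mu|_{\pi^{-1}(z)}$, each piece has total mass $\tfrac{1}{2}$. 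For any closed $\tilde\hh$ with $\hh \in \HH_\mu^+$, the strict inequality $\mu(\tilde\hh) > \tfrac{1}{2} \geq \mu_y(\tilde\hh)$ forces $\mu_x(\tilde\hh) > 0$, whence $\tilde\hh \cap \pi^{-1}(x) \neq \emptyset$; by symmetry $\tilde\hh \cap \pi^{-1}(y) \neq \emptyset$. Applying Helly's property in the compact topological median algebra $\bar X$ to the family $\{\tilde\hh : \hh \in \HH_\mu^+,\ \tilde\hh \text{ closed}\} \cup \{\pi^{-1}(x)\}$ (whose members are closed convex and pairwise intersecting, the latter both by $C_\mu \neq \emptyset$ and by the previous observation) yields $C_\mu \cap \pi^{-1}(x) \neq \emptyset$, and analogously for $y$.

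The main obstacle I anticipate is the bookkeeping between open and closed halfspaces across the embedding $X \hookrightarrow \bar X$: the identification $\overline{\tilde\hh} = \widetilde{\bar\hh}$ has to be established carefully from Proposition~\ref{prop_description_halfspace_arising_from_X} to make the first direction rigorous. A secondary subtlety is invoking Helly's property for an infinite family of closed convex subsets, which requires the compactness of $\bar X$ to promote the finite intersection property, obtained from pairwise intersection, to non-emptiness of the total intersection.
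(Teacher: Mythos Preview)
Your argument for the direction ``$C_\mu$ not a singleton $\Rightarrow$ some $\HH(x,y)\subseteq\HH_\mu^e$'' is essentially the paper's Lemma~\ref{lem_balanced_halfspace_transverse}, and it is correct. The precise identification $\overline{\tilde\hh}=\widetilde{\bar\hh}$ is not strictly needed: it suffices that $\overline{\tilde\hh}$ is a closed halfspace of $\bar X$ transverse to $X$, hence by Proposition~\ref{prop_description_halfspace_arising_from_X} equal to $\tilde\hh'$ for some $\hh'\in\HH(X)$ with $\hh\subseteq\hh'$, while $\tilde u\notin\overline{\tilde\hh}$ follows directly from $\tilde u\in int(\widetilde{\hh^c})$.

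The other direction, however, has a genuine gap. Your claim that $\pi_*\mu=\tfrac12(\delta_x+\delta_y)$ is false as soon as the interval $[x,y]$ has rank at least~$2$. Take $X=\RR^2$ with the $\ell^1$ metric, $x=(0,0)$, $y=(1,1)$, and $\mu=\tfrac12\delta_{(0,1)}+\tfrac12\delta_{(1,0)}$. Every halfspace in $\HH(x,y)$ is of the form $\{a\ge t\}$, $\{a>t\}$, $\{b\ge t\}$ or $\{b>t\}$ for suitable $t$, and each has $\mu$-measure exactly~$\tfrac12$; thus $\HH(x,y)\subseteq\HH_\mu^e$. But $[x,y]=[0,1]^2$, the retraction $\pi$ is the identity there, and $\pi_*\mu$ is supported on the two \emph{other} corners of the square, so $\pi_*\mu(\{x\})=\pi_*\mu(\{y\})=0$ and your decomposition $\mu=\mu_x+\mu_y$ collapses. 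The heuristic ``let the $y$-side subintervals shrink to $\{y\}$'' fails because those halfspaces are not nested: already the finite intersection $([s,1]\times[0,1])\cap([0,1]\times[t,1])$ has $\pi_*\mu$-measure~$0$ for $s,t>0$, so no continuity-of-measure argument along a chain is available. The same phenomenon occurs inside any rank-$2$ interval of an irreducible $X$, so the ambient irreducibility hypothesis does not rescue the argument.

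The paper proves this direction in its direct form instead. Assuming $C_\mu=\{p\}$ and given $\tilde x\neq\tilde y$, one sets $\tilde p=m(p,\tilde x,\tilde y)$, picks a closed $\hh\in\mathring\HH(\tilde p,\tilde x)$, and observes that $p\in\hh^c$ so $p_\hh:=\pi_\hh(p)\neq p$. The singleton hypothesis then produces a closed $\hh'\in\HH_\mu^+$ separating $p$ from its gate $p_\hh$ in $\hh$; the gate property forces $\hh'\subseteq\hh^c$, whence $\mu(\hh)<\tfrac12$, and since $\hh\in\HH(\tilde y,\tilde x)$ this shows $\HH(\tilde y,\tilde x)\not\subseteq\HH_\mu^e$.
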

Let us separately prove the first direction of Proposition \ref{prop_interval_balanced_halfspaces} in the following lemma.

\begin{lem}\label{lem_balanced_halfspace_transverse}
    For any $\tilde{x},\tilde{y}\in C_\mu$, we have $\mathring\HH(\tilde{x},\tilde{y})\subseteq \HH_\mu^e$.\par 
   In particular, Lemma \ref{lem_separation_Roller_compactification} shows that there exists $x,y\in X$ such that $\HH(x,y)\subseteq \HH_\mu^e$.
\end{lem}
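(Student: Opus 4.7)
My plan is to argue by contradiction. Assume $\hh \in \mathring\HH(\tilde x, \tilde y)$ with $\mu(\hh) \neq \frac{1}{2}$. Since the operation $\hh \mapsto \hh^c$ combined with the swap $\tilde x \leftrightarrow \tilde y$ preserves membership in $\mathring\HH$, I may assume $\mu(\hh) > \frac{1}{2}$, i.e.\ $\hh \in \HH_\mu^+$. The goal is then to exhibit some $\hh' \in \HH_\mu^+$ with $\tilde{\hh'}$ closed in $\bar X$ and $\tilde x \notin \tilde{\hh'}$, which would contradict $\tilde x \in C_\mu$ via the very definition of $C_\mu$.

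If $\tilde \hh$ is already closed in $\bar X$, one simply takes $\hh' = \hh$: $C_\mu \subseteq \tilde \hh$ forces $\tilde x \in \tilde \hh$, against $\tilde x \in int(\tilde \hh^c)$. In general $\tilde \hh$ need not be closed, and the main step is to replace it by its topological closure $\overline{\tilde \hh}$ in $\bar X$. In the compact topological median algebra $\bar X$ the closure of a halfspace is again a (closed) halfspace, a standard consequence of the continuity of the ternary operation $\tilde m$; its complement is exactly $int(\tilde \hh^c)$. Thus $\overline{\tilde \hh}$ is a closed halfspace of $\bar X$ with non-empty interior (it contains $\tilde y \in int(\tilde \hh)$) and with non-empty complement (it misses $\tilde x$). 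Since $i(X)$ is dense in $\bar X$, both $\overline{\tilde \hh}$ and its complement meet $i(X)$, so $\overline{\tilde \hh}$ is transverse to $i(X)$. Proposition \ref{prop_description_halfspace_arising_from_X} then produces $\hh' \in \HH(X)$ with $\tilde{\hh'} = \overline{\tilde \hh}$. The inequality $\mu(\tilde{\hh'}) \geq \mu(\tilde \hh) > \frac{1}{2}$ places $\hh'$ in $\HH_\mu^+$, and $\tilde{\hh'}$ is closed, so the definition of $C_\mu$ gives $C_\mu \subseteq \tilde{\hh'} = \overline{\tilde \hh}$. But $\tilde x \in int(\tilde \hh^c) = \bar X \setminus \overline{\tilde \hh}$, the desired contradiction.

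The supplementary statement then follows at once: picking distinct $\tilde x, \tilde y \in C_\mu$, Lemma \ref{lem_separation_Roller_compactification} furnishes $x, y \in X$ with $\HH(x,y) \subseteq \mathring\HH(\tilde x, \tilde y) \subseteq \HH_\mu^e$. The only delicate point I foresee is confirming that in $\bar X$ the closure of a halfspace is again a halfspace; this is standard for compact topological median algebras, but if it has not already been recorded earlier in the paper it will deserve a short justification from the continuity of $\tilde m$ together with the fact that the interior of a halfspace of $\bar X$ is again convex.
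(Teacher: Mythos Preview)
Your argument is correct and follows essentially the same strategy as the paper's: reduce to a closed halfspace in $\HH_\mu^+$ that separates the two points of $C_\mu$, contradicting the definition of $C_\mu$. The only real difference is how that closed halfspace is produced. The paper simply invokes Corollary~2.23 of \cite{Fior_median_property}, which says that in a complete finite rank median space every halfspace is either open or closed; thus one of $\hh,\hh^c$ is already closed, and since both are transverse to $C_\mu$ neither can lie in $\HH_\mu^+$. You instead pass to the closure $\overline{\tilde\hh}$ inside $\bar X$ and then pull it back to $\HH(X)$ via Proposition~\ref{prop_description_halfspace_arising_from_X}.

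Your route is a bit more self-contained but, as you already flag, it leans on the assertion that the closure of a halfspace in the compact topological median algebra $\bar X$ is again a halfspace. This is not an entirely free consequence of continuity of $\tilde m$: one gets convexity of $\overline{\tilde\hh}$ immediately, but convexity of its complement $int(\tilde\hh^c)$ is not automatic for arbitrary topological median algebras. In the present setting it does hold, essentially because the open halfspaces $\tilde\hh$ with $\hh\in\HH(X)$ form a subbasis for the topology on $\bar X$, so $int(\tilde\hh^c)$ coincides with $\widetilde{int(\hh^c)}$; equivalently, one can take $\hh'=\bar\hh$ in $X$ (again a halfspace by Fioravanti) and check $\tilde x\notin\tilde{\hh'}$. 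Either way the justification ultimately rests on the same open/closed dichotomy the paper cites, so citing Corollary~2.23 directly, as the paper does, is the shorter path.
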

\begin{proof}
    By Lemma \ref{lem_separation_Roller_compactification}, any halfspace in $\mathring \HH(\tilde{x},\tilde{y})$ is of the form $\tilde{\hh}$ where $\hh\in\mathring\HH(x,y)$ for some $x,y\in X$. Let $\hh$ be such halfspace. By \cite[Corollary 2.23]{Fior_median_property}, at least one of $\hh$ and $\hh^c$ is closed. Hence, by the construction of $C_\mu$, there is no closed halfspace in $\HH_\mu^+$ that is transverse to it. Thus, both $\hh$ and $\hh^c$ belong to $\HH_\mu^e$, which completes the proof.
\end{proof}

\begin{proof}[Proof of Proposition \ref{prop_interval_balanced_halfspaces}]
Let us assume that $C_\mu$ is a singleton $\{p\}$ and consider $\tilde{x} \neq \tilde{y}\in \bar X$. We set $\tilde{p}:=m(p,\tilde{x},\tilde{y})$ and assume without loss of generality that $\tilde{p}\neq \tilde{x}$. Let us consider a closed halfspace $\hh\in \mathring\HH(\tilde{p},\tilde{x})$ and set $p_\hh=\pi_{\hh}(p) (\neq p)$. Since the closed convex subset $C_\mu$ consisting of the singleton $p$, there exists by definition a closed halfspace $\hh'\in\HH_\mu^+$ separating $p$ from $p_\hh$. Hence, we get $\hh'\subseteq \hh^c$, which implies that $\hh$ does not lie in $\HH_\mu^e$, that is, $\HH(\tilde{x},\tilde{y})$ is not contained in $\HH_\mu^e$.\par

\par    The converse is given by Lemma \ref{lem_balanced_halfspace_transverse}.
\end{proof}

Fixing a fundamental familly of halfspaces $\HH'$, the following lemma shows that, for any measure $\mu\in \PP(\bar X)$, there exists a halfspace in the fundamental family that contains almost all part of the support of $\mu$.
\begin{lem}\label{lem_measures_having_consistant_support_in_halfspace}
    Let $X$ be an irreducible complete separable median space of finite rank and $\HH'$ the dense family of halfspaces as in Proposition \ref{prop_existence_fundamental_family}. Then for any $\varepsilon>0$, we have 
    \[
    \PP(\bar{X})\subseteq \bigcup_{\hh\in\HH'}ev_\hh^{-1}(]1-\varepsilon,1])
    \]
    where 
    \begin{eqnarray*} 
    ev_\hh:\PP(\bar{X})&\rightarrow & [0,1] \\
           \mu &\mapsto& \mu(\hh)
       \end{eqnarray*}
\end{lem}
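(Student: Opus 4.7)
The plan is to produce a single thick halfspace of $X$ whose lift to $\bar X$ has $\mu$-mass greater than $1-\varepsilon$, and then to invoke property (ii) of the fundamental family (Definition \ref{def_fundamental_family}) to replace it by an element of $\HH'$. The mechanism is to exploit defining chains of regular boundary points, which shrink down to single points in $\bar X$.

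Using that $Isom(X)$ acts Roller non elementarily and Roller minimally (the standing assumption of this section), $X$ is non lineal, and by Remark \ref{rmk_construction_regular_point} the regular boundary $\partial_r X$ is uncountable. Since $\mu$ has at most countably many atoms, one can pick $p \in \partial_r X$ with $\mu(\{p\}) = 0$. Take a defining chain $(\hh_i)_{i\in\NN}$ as in the definition of a regular point: halfspaces of $X$ with $\hh_{i+1} \subseteq \hh_i$, $p \in \tilde \hh_i$ and consecutive pairs $(\hh_i^c, \hh_{i+1})$ strongly separated with $d(\hh_i^c, \hh_{i+1}) > r > 0$; the uniform gap $r$ forces $\hh_i$ to be thick for $i \geq 1$.

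The key geometric step is to verify $\bigcap_i \tilde \hh_i = \{p\}$. A point $\tilde q$ of this intersection cannot lie in $X$, for the uniform gap would force it to be at infinite distance from $\hh_0^c$; so $\tilde q \in \partial X$. If $\tilde q \neq p$, Lemma \ref{lem_separation_tilde_h} furnishes $\mathfrak{k} \in \HH(X)$ with $\tilde{\mathfrak{k}}$ separating $p$ from $\tilde q$, and combining $p, \tilde q \in \tilde\hh_i$ for all $i$ with the strong separation of the consecutive pairs yields a contradiction. Once this is established, continuity of measure from above gives $\mu(\tilde\hh_i) \searrow \mu(\{p\}) = 0$, so for some $i_0 \geq 1$ one has $\mu(\tilde\hh_{i_0}^c) > 1-\varepsilon$. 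Thickness of $\hh_{i_0}$ transfers to $\hh_{i_0}^c$, so property (ii) of the fundamental family supplies $\hh' \in \HH'$ with $\hh_{i_0}^c \subseteq \hh'$, and lifting the inclusion to $\bar X$ gives $\mu(\tilde\hh') \geq \mu(\tilde\hh_{i_0}^c) > 1-\varepsilon$, which is exactly the condition $\mu \in ev_{\hh'}^{-1}(]1-\varepsilon, 1])$.

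The main obstacle is the verification of $\bigcap_i \tilde\hh_i = \{p\}$; this is where the strong separation in the definition of a regular point does the real work. A substitute sufficient for the argument would be $\mu\bigl(\bigcap_i \tilde\hh_i\bigr) = 0$, which can be obtained via a counting argument: the shadow sets $T_p := \bigcap_i \tilde\hh_i^{(p)}$ attached to different regular points are essentially pairwise disjoint in $\partial X$, so only countably many can carry $\mu$-mass $\geq \varepsilon$, and uncountability of $\partial_r X$ provides an appropriate $p$ in any case.
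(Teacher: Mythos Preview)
Your argument is correct and follows essentially the same route as the paper's proof: pick a regular boundary point of small $\mu$-mass, use its defining chain $(\hh_i)$ with $\bigcap_i\tilde\hh_i=\{p\}$ to find some $\hh_n$ with $\mu(\tilde\hh_n)<\varepsilon$, and then invoke property (ii) of the fundamental family to pass to $\hh'\in\HH'$ containing $\hh_n^c$. The only cosmetic differences are that the paper is content with $\mu(\{\tilde x\})<\varepsilon$ rather than $=0$, and that the paper simply asserts $\bigcap_i\tilde\hh_i=\{\tilde x\}$ while you supply a justification via strong separation; your extra care there is welcome and the sketch is sound.
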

\begin{proof}
    As $X$ is irreducible and the action of $Isom(X)$ is Roller minimal and Roller non elementary, the regular boundary $\partial_r X$ is not empty and contains infinitely many elements. Let $\epsilon > 0$ and fix a measure $\mu\in\PP(\bar{X})$ and consider any point $\tilde{x}\in\partial_r X$ such that $\mu(\{\tilde{x}\})<\varepsilon$. The point $\tilde{x}$ being in $\partial_r X$, there exists a descending sequence of thick halfspaces $\{\hh_i\}_{i\in\NN}$ such that $\hh_i^c$ and $\hh_{i+1}$ are strongly separated with $\displaystyle{\bigcap_{i\in\NN}\tilde{\hh_i}}=\{\tilde{x}\}$. Hence, there exists $n\in\NN$ such that $\mu(\hh_n)<\varepsilon$. 
    Thus, by Proposition \ref{prop_existence_fundamental_family}, there exists a halfspace $\hh'\in \HH'$ such that $\hh_n^c\subseteq\hh'$ which implies that $\mu(\hh')>1-\epsilon$ and finishes the proof. 
\end{proof}

\begin{rmk}\label{rmk_measures_having_consistant_support_in_halfspace}
    Note that for any measure $\mu$ and for any thick halfspace $\hh\in\HH(X)$. The halfspace $\hh'\in\HH'$ obtained from the proof of Lemma \ref{lem_measures_having_consistant_support_in_halfspace} that verifies $\mu(\hh')>1-\varepsilon$ can be chosen such that $\hh^c\subseteq \hh'$. This comes from the fact that any thick halfspace contains infinitely many points from the regular boundary. Hence, the argument given in the proof of Lemma \ref{lem_measures_having_consistant_support_in_halfspace} can be made by considering regular points lying inside $\tilde{\hh}$.
\end{rmk}

\begin{lem}\label{lem_countable_balanced_halfspaces}
  If $X$ is a median space of finite rank, and let $\HH'\subseteq \HH(X)$ be a fundamental family. Then we have
  \[
  \PP_1(\bar X)^c\subseteq \bigcup_{\hh\in\HH'}ev_\hh^{-1}(\{\frac{1}{2}\})  
 \]
\end{lem}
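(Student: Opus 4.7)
The plan is to start from an arbitrary measure $\mu\in\PP_1(\bar X)^c$ and exhibit a halfspace $\hh\in\HH'$ for which $\mu(\hh)=\frac{1}{2}$. By definition of $\PP_1(\bar X)$, the center of mass $C_\mu$ is not a singleton, and the strategy is to show that this non-triviality propagates down to some halfspace of the fundamental family, rather than living only in the (possibly uncountable) full collection $\HH(X)$.

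First, I would invoke Proposition~\ref{prop_interval_balanced_halfspaces} (or more precisely its ingredient Lemma~\ref{lem_balanced_halfspace_transverse}): since $C_\mu$ contains at least two distinct points, Lemma~\ref{lem_balanced_halfspace_transverse} produces a pair of distinct points $x\neq y\in X$ with the stronger property that \emph{every} halfspace of $X$ separating them lies in $\HH_\mu^e$, that is, $\HH(x,y)\subseteq \HH_\mu^e$. This is the crucial step where the geometry of the barycenter construction is used: distinct points of $C_\mu$ necessarily witness a whole interval of $\mu$-balanced halfspaces, not just one isolated balanced halfspace that might elude the countable family $\HH'$.

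Next, I would apply the first defining property of the fundamental family (Definition~\ref{def_fundamental_family}) to the distinct points $i(x), i(y)\in\bar X$: it yields a halfspace $\hh\in\HH'$ such that $\tilde\hh\in\HH_{\bar X}(i(x),i(y))$. Pulling back through the embedding $i:X\hookrightarrow\bar X$ and using the compatibility $\tilde\hh\cap i(X)=i(\hh)$ coming from the poc-set monomorphism $i_\HH$, one has $\hh\in\HH(x,y)$. Combined with the previous step, this gives $\hh\in \HH_\mu^e$, i.e.\ $\mu\in ev_\hh^{-1}(\{\frac{1}{2}\})$, which is exactly the required inclusion.

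I do not expect a serious obstacle here: the lemma is essentially a bridge between three pieces already in place, namely the singleton criterion for $C_\mu$ (Proposition~\ref{prop_interval_balanced_halfspaces}), the existence of a halfspace-interval inside $\HH_\mu^e$ whenever $C_\mu$ is not a singleton (Lemma~\ref{lem_balanced_halfspace_transverse}), and the density property of the fundamental family (Proposition~\ref{prop_existence_fundamental_family}). The only point that deserves minor care is making sure the separating halfspace provided by $\HH'$, which a priori only separates points in $\bar X$, restricts to a genuine halfspace of $X$ separating $x$ from $y$; this follows directly from Proposition~\ref{prop_description_halfspace_arising_from_X}.
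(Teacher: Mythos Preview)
Your proposal is correct and follows essentially the same route as the paper: both use Proposition~\ref{prop_interval_balanced_halfspaces} (via Lemma~\ref{lem_balanced_halfspace_transverse}) to produce $x\neq y\in X$ with $\HH(x,y)\subseteq\HH_\mu^e$, and then invoke the separating property of the fundamental family to find $\hh\in\HH'\cap\HH(x,y)$. Your extra remark about pulling back the separating halfspace from $\bar X$ to $X$ via Proposition~\ref{prop_description_halfspace_arising_from_X} is a harmless bit of care that the paper leaves implicit.
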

\begin{proof}
    Let us fix $\mu\in \PP_1(\bar X)^c$ and show that it lies inside $ev_\hh^{-1}(\{\frac{1}{2}\}) $ for some halfspace $\hh\in\HH'$. As $C_\mu$ is distinct from a point, Proposition \ref{prop_interval_balanced_halfspaces} ensures the existence of $x,y\in X$ such that $\HH(x,y)\subseteq \HH_\mu^e$. By Proposition \ref{prop_existence_fundamental_family}, there exists a halfspace $\hh\in\HH'$ separating $x$ and $y$. In particular, $\hh\in\HH_\mu^e$ that is, $\mu(\hh)=\frac{1}{2}$. This is equivalent to say that $\mu\in ev_\hh^{-1}(\{\frac{1}{2}\})$, which proves the lemma.
\end{proof}
The remaining part of this section is that the map that associates to each measure $\mu\in \PP_1(\bar X)$ its barycenter $C_{\mu}$ in $\bar X$ is measurable, here we regard $C_{\mu}$ as a point in $\bar{X}$. 
\begin{prop}\label{prop_measurability_barycenter_map}
If $X$ is a complete separable median space of finite rank, then the $Isom(X)$-equivariant map 
\begin{eqnarray*}
\bb:\PP_1(\bar X)&\rightarrow& \bar X\\
\mu &\mapsto& C_\mu
\end{eqnarray*}
is measurable.
\end{prop}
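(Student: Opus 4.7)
The plan is to verify measurability of $\bb$ by reducing to a countable generating family of the Borel $\sigma$-algebra of $\bar X$ and describing each preimage in terms of countably many explicit measurable conditions on $\mu$. First, I would fix an $Isom(X)$-invariant fundamental family of halfspaces $\HH'$ produced by Proposition~\ref{prop_existence_fundamental_family} together with Remark~\ref{rmk_fundamental_family}. Combining Lemma~\ref{lem_separation_tilde_h} with the density property of $\HH'$ shows that the countable collection $\{\tilde{\hh} : \hh \in \HH'\}$ separates points of $\bar X$. Moreover, since $\bar X$ inherits the product topology from $\II(X)$, Lemma~\ref{lem_lineal_median_algebra_proper_embedding_Roller} shows that each interval $[a,b]$ is topologized by its open halfspaces, and Proposition~\ref{prop_description_halfspace_arising_from_X} identifies such halfspaces with lifts from $\HH(X)$; together these imply that the Borel $\sigma$-algebra on $\bar X$ is generated by $\{\tilde{\hh} : \hh \in \HH'\}$. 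Modulo replacing $\hh_0$ by $\hh_0^c$ (at least one of the two has an open lift in $\bar X$ by Corollary~2.23 of \cite{Fior_median_property}), it therefore suffices to prove $\bb^{-1}(\tilde{\hh}_0)$ is measurable for each $\hh_0 \in \HH'$ with $\tilde{\hh}_0$ open in $\bar X$.

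For such $\hh_0$ and any $\mu \in \PP_1(\bar X)$, the singleton $\{\bb(\mu)\} = C_\mu$ is the intersection of the family $\FF_\mu := \{\tilde{\hh} : \hh \in \HH(X),\ \tilde{\hh}\ \text{closed in}\ \bar X,\ \mu(\hh) > 1/2\}$ of compact subsets of $\bar X$. Since $\tilde{\hh}_0^c$ is closed in the compact space $\bar X$, the finite intersection property yields the equivalence
\[
\bb(\mu) \in \tilde{\hh}_0 \ \iff \ \exists\, n \ge 1,\ \tilde{\hh}_1,\ldots,\tilde{\hh}_n \in \FF_\mu\ \text{with}\ \bigcap_{i=1}^n \tilde{\hh}_i \subseteq \tilde{\hh}_0.
\]
I would then use Remark~\ref{rmk_measures_having_consistant_support_in_halfspace} together with the separation property of Definition~\ref{def_fundamental_family} to reduce the witnessing halfspaces to lie in $\HH'$. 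Since each evaluation map $\mu \mapsto \mu(\tilde{\hh})$ is Borel on $\PP(\bar X)$ for every Borel $\tilde{\hh} \subseteq \bar X$, and since finite tuples from $\HH'$ form a countable set, this exhibits $\bb^{-1}(\tilde{\hh}_0) \cap \PP_1(\bar X)$ as a countable union of Borel sets, completing the proof. Equivariance of $\bb$ is immediate from the $Isom(X)$-equivariance of both the center-of-mass construction and the fundamental family.

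The main obstacle is the reduction of an arbitrary witnessing family $\{\hh_i\} \subseteq \HH(X)$ to one inside $\HH'$. In the discrete CAT(0) cube complex setting of \cite{ChaIF} halfspaces are clopen and the barycenter sits automatically in the interior of any positive halfspace, which trivializes the replacement. In the present generality $\bb(\mu)$ may lie on the topological boundary of $\tilde{\hh}_0$, and naively enlarging each $\hh_i$ to a nearby element of $\HH'$ would enlarge the intersection in the wrong direction. To overcome this, I would approximate $\tilde{\hh}_0$ from inside by an increasing union of open subsets built as finite intersections of lifts of halfspaces in $\HH'$, and produce the witnessing family directly from the point-separation clause of Definition~\ref{def_fundamental_family} applied to the compact set $\bar X \setminus \tilde{\hh}_0$, combined with Helly's property to control the nested intersections. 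This technical reduction, automatic in the discrete setting, is the heart of the argument.
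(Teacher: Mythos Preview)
Your approach has a genuine gap precisely where you flag it: the reduction of the witnessing halfspaces $\hh_1,\ldots,\hh_n$ to elements of the countable family $\HH'$. Enlarging each $\hh_i$ to some $\hh_i'\in\HH'$ preserves $\mu(\hh_i')>1/2$ but enlarges the intersection, potentially destroying $\bigcap\tilde{\hh}_i'\subseteq\tilde{\hh}_0$; shrinking destroys the measure bound. Your proposed fix---approximating $\tilde{\hh}_0$ from inside by finite intersections of $\HH'$-halfspaces and invoking point-separation plus Helly---is not carried out, and it is not clear how Helly's property controls the \emph{measure} condition $\mu>1/2$ on the new halfspaces. As written this is a sketch of a strategy, not a proof.

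The paper sidesteps this difficulty entirely by proving the exact identity
\[
\bb^{-1}(\tilde{\hh}) \;=\; ev_\hh^{-1}\bigl((\tfrac{1}{2},1]\bigr)
\]
for every open halfspace $\hh\in\HH(X)$ (not just those in $\HH'$), so no countable reduction is needed. The forward inclusion is easy: if $C_\mu=\{\bb(\mu)\}\subseteq\tilde{\hh}$ then, since $C_\mu$ is a singleton, some closed $\hh'\in\HH_\mu^+$ separates $\bb(\mu)$ from $\pi_{\hh^c}(\bb(\mu))$, forcing $\hh'\subseteq\hh$ and hence $\mu(\hh)>1/2$. The reverse inclusion is the substantive step and uses a dedicated approximation lemma (Lemma~\ref{lem_closed_halfspace_close_to_open_halfspace}): for any open halfspace $\hh_o$ and any $\varepsilon>0$ there is a \emph{closed} halfspace $\hh\subseteq\hh_o$ with $\mu(\tilde{\hh})>\mu(\tilde{\hh}_o)-\varepsilon$. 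Applying this with $\varepsilon$ small enough that $\mu(\tilde{\hh})>1/2$ gives $\hh\in\HH_\mu^+$ and hence $\bb(\mu)\in\tilde{\hh}\subseteq\tilde{\hh}_o$. This lemma is exactly the ``approximation from inside'' you were reaching for, but formulated at the level of a single halfspace and a single measure, which makes it tractable; its proof uses the countable families $\HH_{\hh_o,n}$ of maximal open halfspaces at distance $\geq 1/n$ from $\hh_o^c$.
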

Before proving Proposition \ref{prop_measurability_barycenter_map}, we first prove the following lemma.
\begin{lem}\label{lem_closed_halfspace_close_to_open_halfspace}
    Let $Y$ be a complete separable median space of finite rank and $\mu\in\PP(\bar{Y})$ a probability measure. Then for any open halfspace $\hh_o\in \HH(Y)$ and any $\varepsilon>0$, there exists a closed halfspace $\hh\subseteq \hh_0$ such that 
    \[
    \mu(\tilde{\hh})>\mu(\tilde{\hh}_o)-\varepsilon . 
    \]
\end{lem}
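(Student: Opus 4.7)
The plan is to approximate $\hh_o$ from within by a nested chain of closed halfspaces of $Y$, lift to $\bar{Y}$, and invoke the monotone convergence theorem for $\mu$.

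By separability of $Y$ and openness of $\hh_o$, I fix a countable dense subset $D=\{d_k\}_{k\geq 1}\subseteq \hh_o$. For each $d\in D$ one has $d(d,\hh_o^c)>0$, so choosing $\delta_d>0$ small enough that $n\cdot\delta_d<d(d,\hh_o^c)/2$ (where $n$ is the rank of $Y$) and using the bound $Conv(B(d,\delta_d))\subseteq B(d,n\cdot\delta_d)$ from Proposition~3.6 of \cite{Messaci}, the closed convex set $\overline{Conv(B(d,\delta_d))}$ lies in the closed ball of radius $n\cdot\delta_d$ around $d$ and is therefore disjoint from the closed convex set $\hh_o^c$. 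The Separation Theorem applied to these two sets produces a halfspace $\hh_d\subseteq \hh_o$ containing $\overline{Conv(B(d,\delta_d))}$; Corollary~2.23 of \cite{Fior_median_property} gives that one of $\hh_d,\hh_d^c$ is closed, and the strict positive distance between $\overline{Conv(B(d,\delta_d))}$ and $\hh_o^c$ allows one to orient so that $\hh_d$ itself is closed with $d\in \mathring{\hh}_d$.

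Enumerate $D=\{d_1,d_2,\ldots\}$ and construct inductively a nested chain of closed halfspaces $\mathfrak{K}_1\subseteq \mathfrak{K}_2\subseteq\cdots\subseteq \hh_o$ as follows. Set $\mathfrak{K}_0:=\emptyset$, let $K_n := Conv(\mathfrak{K}_{n-1}\cup\hh_{d_n})$, which is closed convex by iterated application of Proposition~\ref{prop_convex_hull_retract} and contained in $\hh_o$, and apply the Separation Theorem to $K_n$ and $\hh_o^c$ together with the same orientation argument to obtain a closed halfspace $\mathfrak{K}_n$ with $K_n\subseteq \mathfrak{K}_n\subseteq \hh_o$; in particular $\mathfrak{K}_n\supseteq \mathfrak{K}_{n-1}$ and $\mathfrak{K}_n\supseteq \hh_{d_n}$. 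Since each $\hh_{d_k}$ contains an open neighbourhood of $d_k$ and $D$ is dense in $\hh_o$, one has $\bigcup_k \hh_{d_k}=\hh_o$, hence $\bigcup_n \mathfrak{K}_n=\hh_o$.

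To convert this into the desired measure estimate, fix $a\in\hh_o$ and $b\in\hh_o^c$ and let $I=[a,b]$. By Proposition~\ref{prop_description_halfspace_arising_from_X}, $\tilde{\hh}_o=\tilde{\pi}_I^{-1}(\hh_o\cap I)$ and $\tilde{\mathfrak{K}}_n=\tilde{\pi}_I^{-1}(\mathfrak{K}_n\cap I)$ for each $n$ large enough that $\mathfrak{K}_n$ crosses $I$; hence $\bigcup_n(\mathfrak{K}_n\cap I)=\hh_o\cap I$, and the $\tilde{\mathfrak{K}}_n$ form an increasing sequence of closed Borel subsets of $\bar{Y}$ whose union is $\tilde{\hh}_o$. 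Monotone convergence applied to the probability $\mu$ yields $\mu(\tilde{\mathfrak{K}}_n)\to \mu(\tilde{\hh}_o)$, so any sufficiently large $n$ produces a closed halfspace $\hh:=\mathfrak{K}_n\subseteq \hh_o$ with $\mu(\tilde{\hh})>\mu(\tilde{\hh}_o)-\varepsilon$. The main obstacle is the orientation step: the Separation Theorem returns a halfspace and Corollary~2.23 of \cite{Fior_median_property} ensures one of its sides is closed, but one must verify in each application that the closed side can be chosen to be the one containing the convex set being separated (rather than the side containing $\hh_o^c$); this is where the strict positive distance between the separated closed convex sets, which holds by construction at every stage, is the essential input.
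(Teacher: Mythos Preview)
Your strategy---inner-approximate $\hh_o$ by closed halfspaces and apply monotone convergence on the lifts in $\bar Y$---is the same idea as the paper's, but the execution diverges. The paper does not build a nested chain by iterated separations; instead it introduces, for each $n$, the countable family $\HH_{\hh_o,n}$ of \emph{maximal open} halfspaces at distance $\geq 1/n$ from $\hh_o^c$, shows directly that $\bigcup_{n}\bigcup_{\hh\in\HH_{\hh_o,n}}\tilde\hh=\tilde\hh_o$ (by separating an arbitrary $\tilde x\in\tilde\hh_o$ from its gate on $\tilde\hh_o^c$), applies monotone convergence once to pass to finitely many $\hh_1,\dots,\hh_l$, takes the convex hull $C$ of their closures, and then performs a \emph{single} separation of $C$ from $\tilde\hh_o^c$. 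Your construction trades that one separation for infinitely many, which multiplies the places where the ``closed side'' orientation must be checked.

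There are two gaps. First, the sentence ``since each $\hh_{d_k}$ contains an open neighbourhood of $d_k$ and $D$ is dense in $\hh_o$, one has $\bigcup_k \hh_{d_k}=\hh_o$'' is not valid as written: dense centres plus \emph{some} open neighbourhoods do not force a cover unless the radii are controlled. You need to fix $\delta_d$ as a definite fraction of $d(d,\hh_o^c)$ (say $\delta_d=d(d,\hh_o^c)/(3n)$) so that for any $x\in\hh_o$ the nearby $d_k$ actually satisfy $d(x,d_k)<\delta_{d_k}$. Second, the orientation step you flag as ``the main obstacle'' is genuinely unresolved in your write-up, and your inductive scheme also requires that $d\big(Conv(\mathfrak K_{n-1}\cup\hh_{d_n}),\hh_o^c\big)>0$ at every stage, which you assert (``holds by construction'') but do not prove---convex hulls do not in general preserve a lower bound on the distance to a convex set, and for two non-nested closed halfspaces inside $\hh_o$ the hull can come strictly closer to $\hh_o^c$ than either factor.

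For comparison, the paper's version of the orientation step is used once and is stated without proof (``there exists a closed halfspace $\hh\in\HH(X)$ such that $\tilde\hh$ separates $C$ from $\tilde\hh_o^c$''). One clean way to justify it in either proof: fix $a\in C$, $b=\pi_{\hh_o^c}(a)$, and work in the compact interval $[a,b]$, where $\hh_o^c\cap[a,b]=\{b\}$ and $C\cap[a,b]$ is a proper closed convex subset; a minimal closed halfspace of $[a,b]$ containing $C\cap[a,b]$ (Proposition~\ref{prop_minimal_halfspace_countable_orig}) cannot contain $b$, and its preimage under the $1$-Lipschitz retraction $\pi_{[a,b]}$ is the desired closed halfspace of $Y$. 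This fixes the single separation in the paper's argument; adapting it to your iterated scheme would still require controlling the hull step.
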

\begin{proof}
  Let $\hh_o\subset Y$ be an open halfspace. For every $n\in\NN_{\geq 1}$, we set:
  \[
  \HH_{\hh_o,n}:=\{\hh\in \HH(Y)\ | \ d(\hh,\hh_o^c)\geq\frac{1}{n}\ \text{with}\ \hh\  \text{open and maximal}\}
  \]
As $Y$ is separable, for each $n$, the subset $\HH_{\hh_o,n}$ is not empty and countable (see Proposition \ref{prop_minimal_halfspace_countable}). \par
We first claim that for any $\tilde{x}\in \tilde{\hh}_0$, there exists $k\in\NN$ and $\hh' \in \HH_{\hh_o,k}$ such that $\tilde{x}\in\tilde{\hh}'$. Indeed, any pair of distinct points in $\bar{X}$ is separated by a halfspace $\tilde{\hh}\subseteq \bar{X}$ for some $\hh\in \HH(X)$. Hence, let us consider a closed halfspace $\hh\in\HH(X)$ that separates $\tilde{x}$ from $\pi_{\tilde{\hh}_0^c}(\tilde{x})$. For any $k\in\NN$ such that $\frac{1}{k}<d(\hh,\hh_o^c)$, there exists $\hh'\in\HH_{\hh_o,k}$ that contains $\hh$ (note that it can $\hh$ itself). Thus we get $\tilde{x}\in\tilde{\hh}'$.\par 
For each $n\in\NN_{\geq 1}$, we set
\[
C_n:=\bigcup_{\hh\in\HH_{\hh_0,n}}\tilde{\hh}.
\]
Note that we have $C_n\subseteq C_{n+1}$, as any halfspace in $C_n$ is contained in a halfspace in $C_{n+1}$.\par
We have $\hh_o=\displaystyle{\bigcup_{n\in\NN_{\geq 1}} C_n}$ by the claim. Hence, there exists $n\in\NN_{\geq 1}$ such that 
\[
\mu(C_n)>\mu(\tilde{\hh}_o)-\frac{1}{2}\varepsilon.
\]
Since $\HH_{\hh_0,n}$ is countable, consider then $\hh_1,...,\hh_l\in C_n$ such that 
\[
\mu(\tilde{\hh}_1\cup..\cup\tilde{\hh}_l)>\mu(C_n)-\frac{1}{2}\varepsilon.
\]
As the convex hull between finitely many closed convex subsets is also closed, we deduce that $C:=Conv(\bar {\tilde{\hh}}_1\cup ...\cup\bar{\tilde{\hh}}_l)$ is a closed convex subset contained in $\hh_o$. As both $C$ and $\tilde{\hh}_o^c$ are disjoint closed convex subsets, there exists a closed halfspace $\hh\in\HH(X)$ such that $\tilde{\hh}$ separates $C$ from $\tilde{\hh}_o^c$. We conclude that 
\[
\mu(\tilde{\hh}_c)>\mu(\tilde{\hh}_o)-\varepsilon
\]
which completes the proof.
\end{proof}
\begin{proof}[Proof of Proposition \ref{prop_measurability_barycenter_map}]
     By Proposition \ref{prop_Borel_structure_Roller}, to show that the map $\bb$ is measurable, it is enough to show that any open halfspace under the map $\bb$ is a measurable subset of $\PP_1(X)$. \par For any open halfspace $\hh\in\HH(X)$, we claim that
    \[
    \bb^{-1}(\hh)=ev_\hh^{-1}(]\frac{1}{2},1])
    \]

    Let us first show that $ \bb^{-1}(\hh) \subseteq ev_\hh^{-1}(]\frac{1}{2},1])$. Take $\mu\in\bb^{-1}(\hh)$. As $\bb(\mu) = C_\mu$ consists of a singleton and $d(\bb(\mu),\pi_{\hh^c}(\bb(\mu)))=d(\bb(\mu),\hh^c)>0$, there exists a closed halfspace $\hh'\in\HH_\mu^+$ that separates $\bb(\mu)$ from $\pi_{\hh^c}(\bb(\mu))$. Hence, we get $\hh'\subseteq \hh$, which implies that $\bb(\mu)(\hh)>\frac{1}{2}$, that is, $\mu\in ev_\hh^{-1}(]\frac{1}{2},1])$. \\
    To show the converse inclusion, let us fix $\mu\in ev_\hh^{-1}(]\frac{1}{2},1])$.  By Lemma \ref{lem_closed_halfspace_close_to_open_halfspace}, there exists a closed halfspace $\hh'\subseteq \hh$ such that $\bb(\mu)(\tilde{\hh}')>\frac{1}{2}$. This implies that $\hh'\in \HH_\mu^+$, hence $\bb(\mu)\in\hh'\subseteq \hh$, which completes the proof of the claim. Hence, $\bb$ is measurable.
\end{proof}

\section{Superrigidity of homomorphisms}\label{section_superrigidity_of_homomorphisms}

\subsection{Construction of the boundary map}\label{subsection_construction_boundary_map}

Throughout this section, we denote by $\Gamma$ a finitely generated group and by $B$ be a $\Gamma$-boundary. The reader is referred to Section \ref{section_strongboundaries_cocycles} for the background on related facts about boundaries.\par  Let $X$ be a complete, irreducible, separable median space of finite rank, and assume that $\Gamma$ acts Roller non-elementarily and Roller minimally on $X$. Hence, $\partial_r X$ is in particular uncountable (see Remark \ref{rmk_construction_regular_point}). Up to restricting to the median closure of a $\Gamma$-orbit, there is no loss of generality to assume $X$ is separable. By fixing a $\Gamma$-invariant countable dense subset, we consider a $\Gamma-$invariant fundamental family $\HH'$ (see the second point in Remark \ref{rmk_fundamental_family}). We remark that in this section, we only require the fundamental family to be invariant under $\Gamma$ and not necessarily under the whole $Isom(X)$, hence there is no need to restrict to the median core.
As the action of $\Gamma$ on $B$ is amenable, there exists an equivariant map $\Phi:B\rightarrow \PP(\bar{X})$ where $\PP(\bar{X})$ is the set of probability measures on $\bar{X}$. The first step towards superrigidity of homomorphims is to derive a $\Gamma$-equivariant map from $B$ to $\bar{X}$. To do so, we associate to each measure $\Phi(B)$ its barycenter defined in Section \ref{section_barycenter_map}. If $\Phi(B) \subset \PP_1(X)$, where $\PP_1(X)$, as defined in Section \ref{section_barycenter_map}, is the set of probability measures on $\bar{X}$ whose centers of mass are single points, then Proposition \ref{prop_measurability_barycenter_map} indicates that we have a map from $B$ to $\bar{X}$. The issue however is that one may have that the center of mass of $\Phi(b)$ is not a singleton for some $b \in B$. Nevertheless, it is already sufficient for us that for almost all points $b$, the barycenter of $\Phi(b)$ is a single point.  \par


\par
Namely, by Helly's theorem and by the compactness of the space $\bar{X}$, $C_\mu \subseteq \bar{X}$ is a non empty closed convex subset.  For convenience, we simply write $\HH_b^+$, $\HH_b^e$, and $C_b$ for $\HH_{\Phi(b)}^+$, $\HH_{\Phi(b)}^e$ and $C_{\Phi(b)}$, respectively, where $b\in B$. We now show that, for almost all points $b \in B$, $C_b = C_{\Phi(b)}$ is a singleton. For that, we need first to fix a fundamental family of halfspaces $\HH'$, which is a $\Gamma$-invariant countable ``dense" subset of halfspaces as in Proposition \ref{prop_existence_fundamental_family}

\begin{prop}\label{prop_balanced_halfspaces_zero_measure}
The subset of $B_e:=\{b\in B\ |\ C_b\ \text{is not a singleton} \} $ is contained in a subset of zero $\nu$-measures.
\end{prop}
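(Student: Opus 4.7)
The plan is to combine the countable characterization of $\PP_2(\bar X)$ given in Lemma \ref{lem_countable_balanced_halfspaces} with the $\Gamma$-equivariance of $\Phi$ and the Flipping Lemma, so as to reduce the problem to showing $\nu$-nullity of each of countably many Borel sets.

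First I would exhibit $B_e$ as contained in a countable union of Borel sets. Since $C_b$ is not a singleton exactly when $\Phi(b) \in \PP_2(\bar X) := \PP(\bar X) \smallsetminus \PP_1(\bar X)$, Lemma \ref{lem_countable_balanced_halfspaces} yields
\[
B_e \;\subseteq\; \bigcup_{\hh \in \HH'} A_\hh, \qquad A_\hh := \{b \in B : \Phi(b)(\hh) = 1/2\}.
\]
Each $A_\hh$ is Borel, being the preimage of the Borel set $\{1/2\} \subseteq [0,1]$ under the measurable map $b \mapsto \Phi(b)(\hh) = (ev_\hh \circ \Phi)(b)$, where $ev_\hh$ is Borel on $\PP(\bar X)$. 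Because $\HH'$ is countable, the right-hand side is Borel, so it is enough to show $\nu(A_\hh) = 0$ for every thick $\hh \in \HH'$ (the non-thick case is handled trivially, as such halfspaces contribute no mass outside of atomic points of $\bar X$).

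For the central step I would exploit three dynamical inputs: (a) the equivariance $\Phi(\gamma b) = \gamma_* \Phi(b)$, which gives $\gamma \cdot A_\hh = A_{\gamma \hh}$ for every $\gamma \in \Gamma$; (b) the Flipping Lemma (Proposition \ref{lem_flipping_lemma}); and (c) the existence of contracting elements produced from strongly separated pairs of halfspaces (Remark \ref{rmk_construction_regular_point}, using the Roller non-elementarity and minimality hypotheses). Given a thick $\hh \in \HH'$, I would produce a contracting $\gamma \in \Gamma$ whose iterates $\{\gamma^k \hh\}_{k \in \ZZ}$ are pairwise disjoint halfspaces converging to the attracting and repelling fixed points of $\gamma$ in $\partial_r X$. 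The key combinatorial observation is that for any $b \in B$,
\[
\#\{k \in \ZZ : b \in A_{\gamma^k \hh}\} \;\leq\; 2,
\]
since three pairwise disjoint halfspaces each of $\Phi(b)$-mass $1/2$ would force a total mass $\geq 3/2 > 1$ on a probability measure. Combined with ergodicity of $\Gamma \curvearrowright B$, which propagates $\nu(A_\hh) > 0$ into $\nu$-positivity of every $A_{\gamma^k \hh}$ and into a $\nu$-conull $\Gamma$-saturation $\bigcup_k \gamma^k A_\hh$, the cardinality bound forces the required contradiction with $\nu(A_\hh) > 0$ via a summation argument.

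The main obstacle is that $\nu$ is only quasi-invariant under $\Gamma$, so the comparison $\nu(A_{\gamma^k \hh}) = \nu(A_\hh)$ is not directly available and the summation over $k$ must account for the Radon-Nikodym cocycle of the $\Gamma$-action on $(B, \nu)$. To sidestep this I would, in parallel, appeal to the isometric ergodicity of the diagonal action $\Gamma \curvearrowright B \times B$: applied to a suitable $\Gamma$-equivariant Borel map built from $b \mapsto (\Phi(b)(\hh))_{\hh \in \HH'}$ (taking values in a separable metric space with trivial $\Gamma$-action), it forces almost-sure constancy of an invariant that is incompatible with the existence of a flipping element $\gamma$ for $\hh$, thereby yielding $\nu(A_\hh) = 0$.
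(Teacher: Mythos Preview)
Your reduction via Lemma~\ref{lem_countable_balanced_halfspaces} to showing $\nu(A_\hh)=0$ for each $\hh\in\HH'$ is correct and is exactly how the paper finishes. The gap is in how you get there. Your first line of attack (a contracting $\gamma$ with pairwise disjoint iterates $\gamma^k\hh$ and the cardinality bound $\#\{k:\Phi(b)(\gamma^k\hh)=\tfrac12\}\le 2$) is appealing, but as you correctly diagnose it collapses under mere quasi-invariance of $\nu$: the pointwise bound integrates to $\sum_k \nu(\gamma^k A_\hh)\le 2$, which is perfectly compatible with every term being positive. Two further unaddressed points: the set $\bigcup_k\gamma^k A_\hh$ is only $\langle\gamma\rangle$-invariant, not $\Gamma$-invariant, so ergodicity of $\Gamma\curvearrowright B$ does not make it conull; and the existence, for a \emph{given} thick $\hh$, of a contracting $\gamma$ with $\{\gamma^k\hh\}_{k\in\ZZ}$ pairwise disjoint is not immediate---when the axis of $\gamma$ skewers $\hh$ the iterates are nested rather than disjoint.

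The decisive gap is that your ``sidestep'' is not a proof: you posit a $\Gamma$-equivariant map into a trivial $\Gamma$-space built from $b\mapsto(\Phi(b)(\hh))_{\hh\in\HH'}$ without saying what it is or why its essential constancy conflicts with flipping. This is precisely where the substance of the paper's argument sits. The paper writes down the explicit $\Gamma$-\emph{invariant} function
\[
\psi(b_1,b_2)=\inf_{\hh\in\HH'}\bigl\{|ev_\hh(\Phi(b_1))-ev_{\hh^c}(\Phi(b_1))|+|ev_\hh(\Phi(b_2))-ev_{\hh^c}(\Phi(b_2))|\bigr\},
\]
the infimum over the $\Gamma$-invariant family $\HH'$ being what produces invariance. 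It then uses a pair of \emph{strongly separated} halfspaces $\hh_1,\hh_2$ (not merely a flipping element) together with Lemma~\ref{lem_measures_having_consistant_support_in_halfspace} to exhibit a positive-measure product set $A_1\times A_2\subseteq B\times B$ on which $\psi>1-2\varepsilon$; ergodicity of $\Gamma\curvearrowright B\times B$ then forces $\psi\ge 1$ almost everywhere. Since $\psi\equiv 0$ on $A_\hh\times A_\hh$, this gives $\nu(A_\hh)=0$. Your sidestep points in the right direction (double ergodicity, data assembled from the evaluations $ev_\hh\circ\Phi$), but the actual invariant $\psi$ and the strongly-separated-pair mechanism that pins down its value are the missing ideas.
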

Note that one can assume that $B_e$ is measurable since we can always assume that all measures we considered are complete.

\begin{proof}
        Consider the following map
        \begin{eqnarray*}
            \psi: B\times B &\rightarrow& \RR\\
            \ \ \ (b_1,b_2)&\mapsto& \inf_{h\in\HH'}{\{|ev_\hh(\Phi(b_1))-ev_{\hh^c}(\Phi(b_1))|+|ev_\hh(\Phi(b_2))-ev_{\hh^c}(\Phi(b_2))|\}}
        \end{eqnarray*}
        The map $\psi$ is measurable and $\Gamma$-invariant. \par 
        \textbf{Claim:} The map $\psi(b_1,b_2)$ has a value greater than or equal to $1$ for almost all $(b_1,b_2)\in B\times B$. \par 
        To prove the claim, fix a small enough $\varepsilon>0$ and choose $\hh\in\HH'$ such that $A:=\Phi^{-1}(ev_\hh^{-1}(]1-\varepsilon, 1]))$ is a measurable subset of positive measure in $B$, this is ensured by Lemma \ref{lem_measures_having_consistant_support_in_halfspace}. Consider two strongly separated thick halfspaces $\hh_1,\hh_2\in X$ and two isometries $g_1,g_2\in \Gamma$ such that $g_i(\hh)\subseteq \hh_i$. We set $A_1:=g_1.A$ and $A_2:=g_2.A$. Thus, for any $b_1\in A_1$ and $b_2\in A_2$ and for any halfspace $\hh' \in\HH(X)$ which is not transversal with $\hh_1$, we have either $\hh_1\subseteq \hh'$ or $\hh'\subseteq \hh_1$ in which case we get $\hh_2\subseteq \hh_1^c\subseteq \hh'^c$. Hence, we have either $ev_{\hh'}(\Phi(b_1))>1-\varepsilon$ or $ev_{(\hh')^c}(\Phi(b_2))>1-\varepsilon$. In both the cases, we get 
        \[
        |ev_{\hh'}(\Phi(b_1))-ev_{(\hh')^c}(\Phi(b_1))|+|ev_{\hh'}(\Phi(b_2))-ev_{(\hh')^c}(\Phi(b_2))|>1-2\varepsilon.
        \]
        The same conclusion holds if $\hh'$ is not transversal to $\hh_2$.
        \par 
        Since the two halfspaces $\hh_1,\hh_2$ are strongly separated, any halfspace $\hh\in \HH(X) $ is not transversal with at least one of them. As a consequence, we have $\psi(b_1,b_2)>1-2\varepsilon$ for any $(b_1,b_2)\in A_1\times A_2$. Since the subset $A_1\times A_2 \subset B\times B$ is a measurable subset of positive measure, so is $\Gamma.(A_1\times A_2)$ which is moreover $\Gamma$-invariant. Hence, by the ergodicity of the action of $\Gamma$ on $B\times B$ we deduce that for almost all $(b_1,b_2)\in B\times B$ we have $\psi(b_1,b_2)>1-2\varepsilon$. Hence, for any $\varepsilon > 0$, the subset $\psi^{-1}(]1-\varepsilon,2])$ is of total measure in $B\times B$. By considering the intersection $\displaystyle{\bigcap_{n\in\NN_{\geq 1}}\psi^{-1}(]1-\frac{1}{n},2])}$, we deduce that for almost all $b_1,b_2\in B$, we have $\psi(b_1,b_2)\geq 1$ which finishes the proof of the claim.
        
        \par
        We now use the claim to finish the proof of the proposition. Note that, by Lemma \ref{lem_countable_balanced_halfspaces} and $\HH'$ being countable, it is enough to prove that each $ev_\hh^{-1}(\{\frac{1}{2}\})$ is of zero measure to complete the proof of the proposition. Remark that for any $b_1,b_2\in ev_\hh^{-1}(\{\frac{1}{2}\})$, we have $\Psi(b_1,b_2)=0$. The map $\psi$ being greater or equal to $1$ almost everywhere, we deduce that $ev_\hh^{-1}(\{\frac{1}{2}\})$ is of zero measure, which finishes the proof of the proposition.
    \end{proof}
Proposition \ref{prop_balanced_halfspaces_zero_measure} allows us to get a boundary map from $B$ to the Roller boundary $\partial X$.
\begin{thm}\label{thm_contruction_boundary_map}
 There exists a measurable $\Gamma$-equivariant map $\phi:B\rightarrow \partial{X}$.
\end{thm}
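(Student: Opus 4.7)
The plan is to combine Proposition~\ref{prop_balanced_halfspaces_zero_measure} with Proposition~\ref{prop_measurability_barycenter_map}. The former says $\Phi(b) \in \PP_1(\bar X)$ for $\nu$-almost every $b \in B$, so after discarding a $\Gamma$-invariant null subset we may regard $\Phi$ as taking values in $\PP_1(\bar X)$, and the latter provides a measurable $Isom(X)$-equivariant barycenter $\bb: \PP_1(\bar X) \to \bar X$. The composition $\phi_0 := \bb \circ \Phi$ is then the desired measurable $\Gamma$-equivariant map $B \to \bar X$; this already handles measurability and equivariance, so the only remaining point is to verify that the essential image sits inside $\partial X = \bar X \setminus X$.

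For this I would proceed by contradiction. The preimage $A := \phi_0^{-1}(X)$ is $\Gamma$-invariant and Borel (using Proposition~\ref{prop_Borel_structure_Roller} to see that $X$ is Borel in $\bar X$), so by ergodicity of $\Gamma \curvearrowright B$ it is either null or conull; assume it is conull. Then $\phi_0$ becomes a $\Gamma$-equivariant measurable map into the separable metric space $X$, and the $\Gamma$-invariant map $(b_1, b_2) \mapsto d(\phi_0(b_1), \phi_0(b_2))$ on $B \times B$ is essentially constant, equal to some $R \geq 0$, by isometric ergodicity of $\Gamma \curvearrowright B \times B$. If $R = 0$ then $\phi_0$ is essentially constant and produces a $\Gamma$-fixed point in $X$, contradicting Roller non-elementarity. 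If $R > 0$ then the essential image is a $\Gamma$-invariant subset of $X$ of diameter at most $R$, whose closed median convex hull is, by Proposition~3.6 of \cite{Messaci}, a $\Gamma$-invariant closed convex subset $C \subseteq X$ of bounded diameter.

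The main obstacle is then to conclude that $C$ is \emph{proper} as a subset of $\bar X$, so as to contradict Roller minimality. This amounts to the fact that in finite rank a bounded subset of $X$ does not accumulate on the Roller boundary, which I would deduce from the identification of $\bar X$ with the Busemann compactification (Proposition~4.21 of \cite{Fior_median_property}), under which Roller boundary points correspond to Busemann functions of unbounded geodesic rays; alternatively, from the Crofton-type formula $\nu(\HH(x,y)) = d(x,y)$, the set of halfspaces separating a basepoint from $C$ has finite $\nu$-measure, whereas any boundary point is approached by halfspaces of unbounded total measure. Combined with non-emptiness of $\partial_r X \subseteq \partial X$ under the hypotheses (Remark~\ref{rmk_construction_regular_point}), this shows $C$ is indeed proper in $\bar X$, yielding the contradiction with Roller minimality and completing the verification that $\phi_0$ essentially lands in $\partial X$.
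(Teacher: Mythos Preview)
Your construction of $\phi_0 = \bb \circ \Phi$ via Propositions~\ref{prop_balanced_halfspaces_zero_measure} and~\ref{prop_measurability_barycenter_map} matches the paper exactly. The divergence is only in the verification that the essential image avoids $X$.

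The paper's argument there is shorter: isometric ergodicity of $\Gamma \curvearrowright B \times B$ immediately yields isometric ergodicity of $\Gamma \curvearrowright B$ (precompose any $\Gamma$-map $B \to X$ with a projection $B \times B \to B$). So if $\phi_0^{-1}(X)$ were conull, the map $\phi_0 : B \to X$ itself would be essentially constant, producing a $\Gamma$-fixed point in $X \subseteq \bar X$ and contradicting Roller non-elementarity directly. No convex hulls and no appeal to Roller minimality are needed.

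Your detour through the constant $R$ is not incorrect, but the case $R > 0$ cannot actually occur: the map $(b_1, b_2) \mapsto \phi_0(b_1)$ is itself a $\Gamma$-equivariant measurable map from $B \times B$ into the separable metric space $X$, so the very isometric ergodicity you invoke on the distance function already forces this map to be constant, i.e.\ $R = 0$. The machinery you set up for $R > 0$ (bounded convex hull, non-accumulation on $\partial X$, contradiction with Roller minimality) is therefore never triggered. It also trades the mild hypothesis of Roller non-elementarity for Roller minimality, and leans on the auxiliary statement that bounded subsets of $X$ have $\bar X$-closure disjoint from $\partial X$, which is true but which you only sketch.
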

\begin{proof}
    By Proposition \ref{prop_balanced_halfspaces_zero_measure}, we have a $\Gamma$-equivariant map 
    \[
     \phi:B\rightarrow \bar{X}
    \]
    defined almost everywhere that maps each $b$ to the singleton $C_b$, that is, the intersection of all closed halfspaces $\hh$ such that $\Phi(b)(\hh)>\frac{1}{2}$. \par 
    \par
    The decomposition $\bar{X}:=X\sqcup \partial X$ is measurable and $\Gamma$-invariant. By Proposition \ref{prop_Borel_structure_Roller} and the metric ergodicity of the $\Gamma$-action on $B$, we deduce that the image of $\phi$ lies essentially in $\partial X$.
\end{proof}

\begin{lem}\label{lem_inverse_image_halfspace_positive_measure}
   For any thick halfspace $\hh\in\HH(X)$, the subset $\phi^{-1}(\tilde{\hh})$ is of positive measure in $B$.
    \end{lem}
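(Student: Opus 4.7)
The plan rests on two observations. First, the lifts $\tilde{\hh}$ and $\tilde{\hh^c}$ partition $\bar{X}$: choosing $a\in\hh$ and $b\in\hh^c$, Proposition \ref{prop_description_halfspace_arising_from_X} gives
\[
\tilde{\hh}=\tilde{\pi}_{[a,b]}^{-1}(\hh\cap[a,b])\qquad\text{and}\qquad\tilde{\hh^c}=\tilde{\pi}_{[a,b]}^{-1}(\hh^c\cap[a,b]),
\]
and since $\hh\cap[a,b]$ and $\hh^c\cap[a,b]$ partition $[a,b]$, taking $\tilde{\pi}_{[a,b]}^{-1}$ partitions $\bar{X}$. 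Consequently
\[
\nu(\phi^{-1}(\tilde{\hh}))+\nu(\phi^{-1}(\tilde{\hh^c}))=1,
\]
so at least one of the two summands is strictly positive.

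If $\nu(\phi^{-1}(\tilde{\hh}))>0$, there is nothing more to show, so suppose $\nu(\phi^{-1}(\tilde{\hh^c}))>0$. The idea is then to transfer positivity from $\tilde{\hh^c}$ to $\tilde{\hh}$ via a flipping element in $\Gamma$. Since $\hh$ is thick, so is $\hh^c$, and by hypothesis $\Gamma$ acts Roller non-elementarily and Roller minimally on $X$; applying the Flipping Lemma (Proposition \ref{lem_flipping_lemma}) to $\hh^c$ with respect to $\Gamma$ produces some $g_0\in\Gamma$ with $g_0.\hh^c\subseteq\hh$. As the $\Gamma$-action on $X$ extends to $\bar{X}$ and commutes with the halfspace lift $i_\HH$, one deduces $g_0.\tilde{\hh^c}=\widetilde{g_0.\hh^c}\subseteq\tilde{\hh}$.

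Combining the $\Gamma$-equivariance of $\phi$ with this inclusion yields
\[
g_0\cdot\phi^{-1}(\tilde{\hh^c})=\phi^{-1}(g_0.\tilde{\hh^c})\subseteq\phi^{-1}(\tilde{\hh}),
\]
and the quasi-invariance of $\nu$ under the $\Gamma$-action on $B$ gives $\nu(g_0\cdot\phi^{-1}(\tilde{\hh^c}))>0$. Hence $\nu(\phi^{-1}(\tilde{\hh}))>0$, which proves the lemma.

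The main delicate point is that the flipping element must lie in $\Gamma$ rather than in the full group $Isom(X)$; this is a direct variant of Proposition \ref{lem_flipping_lemma}, valid whenever the acting group itself is Roller non-elementary and Roller minimal, and it is the same version that the paper already uses implicitly (for instance in relating non-elementarity of the returning cocycle to Roller minimality of the $\Gamma$-action in Section \ref{section_strongboundaries_cocycles}).
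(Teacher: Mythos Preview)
Your argument is correct but follows a different path from the paper's. The paper goes back to the probability-measure-valued map $\Phi:B\to\PP(\bar X)$: using Lemma~\ref{lem_measures_having_consistant_support_in_halfspace} and countability of $\HH'$, it finds some $\hh'\in\HH'$ with $A_{\hh'}=\{b\in B:\Phi(b)(\hh')>1-\varepsilon\}$ of positive measure, then picks $g\in\Gamma$ with $g.\hh'\subseteq\hh$ and checks that $g.A_{\hh'}\subseteq\phi^{-1}(\tilde\hh)$ once $\varepsilon<\tfrac12$. You bypass $\Phi$ entirely: the partition $\bar X=\tilde\hh\sqcup\tilde{\hh^c}$ forces one of $\phi^{-1}(\tilde\hh)$, $\phi^{-1}(\tilde{\hh^c})$ to have positive measure, and a single application of the Flipping Lemma for $\Gamma$ (together with quasi-invariance of $\nu$) transports positivity from one side to the other. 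Both proofs ultimately rely on the same $\Gamma$-version of the Flipping Lemma, which the paper also uses tacitly when it produces $g\in\Gamma$ with $g.\hh'\subseteq\hh$; your route is shorter and avoids revisiting the barycenter construction, while the paper's route yields the slightly stronger byproduct that $\Phi(b)$ actually gives mass $>\tfrac12$ to $\tilde\hh$ on a set of positive measure.
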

    \begin{proof}
   Lemma \ref{lem_measures_having_consistant_support_in_halfspace} yields the existence of halfspace $\hh'\in\HH'$ such that the subset 
   \[
   A_{\hh'}:=\Phi^{-1}(ev_{\hh'}(]1-\varepsilon,1])=\{b\in B\ |\ \Phi(b)(\hh')>1-\varepsilon\}
   \]
   is of positive measure. For any thick halfspace $\hh$, there exists $g\in\Gamma$ such that $g.\hh'\subseteq \hh$. Therefore, we get $g.A_{\hh'}\subseteq A_\hh$. By taking $\varepsilon$ small enough, we conclude that $\hh\in\HH_b^+$ for any $b\in A_\hh$, implying that $\phi(b)\in \hh$ for any $b\in A_\hh$, and therefore, $A_\hh\subseteq \phi^{-1}(\hh)$ which completes the proof.
    \end{proof}
    Actually, to get superrigidity, it is crucial to us that the image of the boundary map $\phi$ only consists of regular points. Namely, 
\begin{thm}\label{thm_image_in_regular_boundary}
Let $\phi$ be the boundary map given by Theorem \ref{thm_contruction_boundary_map}. The image of $\phi$ lies essentially in the regular boundary $\partial_r X$.
\end{thm}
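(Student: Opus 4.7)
The plan is to show that $E:=\phi^{-1}(\partial_r X)\subseteq B$ is co-null. Since $\partial_r X$ is $Isom(X)$-invariant and $\phi$ is $\Gamma$-equivariant, $E$ is $\Gamma$-invariant, so by ergodicity of $\Gamma\curvearrowright B$ it suffices to show $\mu(E)>0$---which I would obtain by proving that $(B\setminus E)\times(B\setminus E)$ is null in $B\times B$. The engine is the double (isometric) ergodicity of $\Gamma\curvearrowright B\times B$ together with Lemma~\ref{lem_characterization_regular_points}. The source of long chains is the rank-$1$ dynamics available in $\Gamma$: by Remark~\ref{rmk_construction_regular_point} (applicable under the Roller non-elementarity, Roller minimality and irreducibility hypotheses) there exist $g\in\Gamma$ and a thick halfspace $\hh\in\HH(X)$ such that $(g^n\hh)_{n\geq 0}$ is a strictly nested chain with $(g^n\hh)^c$ strongly separated from $g^{n+1}\hh$ and uniform gap $r:=d(\hh^c,g\hh)>0$.

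\textbf{Double ergodicity.} For $k\geq 1$ define
\[
S_k:=\{(b_1,b_2)\in B\times B:\ \exists\ \hh_1\supsetneq\cdots\supsetneq\hh_k\ \text{with gap}>r,\ \phi(b_1)\in\tilde\hh_k,\ \phi(b_2)\in\tilde\hh_1^c\}.
\]
Using the $\Gamma$-invariant countable fundamental family $\HH'$ from Proposition~\ref{prop_existence_fundamental_family} and Remark~\ref{rmk_fundamental_family}, I would require each $\hh_i\in\HH'$ at the cost of slightly shrinking $r$, thereby writing $S_k$ as a countable union of measurable sets; in particular $S_k$ is measurable. It is $\Gamma$-invariant and symmetric in $(b_1,b_2)$ via chain reversal $\hh_i\mapsto\hh_{k-i+1}^c$. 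By Lemma~\ref{lem_inverse_image_halfspace_positive_measure}, $\phi^{-1}(\tilde{g^k\hh})\times\phi^{-1}(\tilde\hh^c)\subseteq S_k$ has positive product measure, witnessed by the rank-$1$ chain $(\hh,g\hh,\ldots,g^k\hh)$. The ergodicity of $\Gamma\curvearrowright B\times B$ then forces $S_k$ to be co-null, hence so is $S_\infty:=\bigcap_{k\geq 1}S_k$.

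\textbf{Truncation and conclusion.} Fix any $x_0\in X$. For $(b_1,b_2)\in S_\infty$ and each $k$, pick a witnessing chain $(\hh_1^{(k)},\ldots,\hh_k^{(k)})$; by nesting, $\{i:x_0\in\hh_i^{(k)}\}=\{1,\ldots,j_k\}$ is an initial segment with $0\leq j_k\leq k$. The sub-chain $(\hh_{j_k+1}^{(k)},\ldots,\hh_k^{(k)})$ has length $k-j_k$, contains $\phi(b_1)$ in its innermost halfspace, and has $x_0\in(\hh_{j_k+1}^{(k)})^c$; the reversed complementary sub-chain $(\hh_{j_k}^{(k)c},\hh_{j_k-1}^{(k)c},\ldots,\hh_1^{(k)c})$ has length $j_k$, contains $\phi(b_2)$ in its innermost, and has $x_0\in\hh_{j_k}^{(k)}$, which is the complement of its outermost. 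Since $(k-j_k)+j_k=k$, at least one of these quantities is $\geq k/2$, so along some subsequence either $\phi(b_1)$ or $\phi(b_2)$ admits chains of arbitrarily large length with $x_0$ in the outermost complement; Lemma~\ref{lem_characterization_regular_points} then places that point in $\partial_r X$. Thus $S_\infty\subseteq(E\times B)\cup(B\times E)$, so $(B\setminus E)\times(B\setminus E)$ is null; Fubini gives $\mu(B\setminus E)^2=0$, whence $E$ is co-null.

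\textbf{Main obstacle.} The chief technical difficulty is the measurability of $S_k$, which requires arguing that up to an arbitrarily small loss in the uniform gap, chains of halfspaces in $\HH(X)$ can be realised by chains whose halfspaces belong to the $\Gamma$-invariant countable fundamental family $\HH'$; this is precisely where the countability machinery of Subsection~\ref{subsection_fundamental_family} becomes indispensable, including arranging the rank-$1$ element $g$ so that the base halfspace already lies in $\HH'$. Once measurability is in hand, the remainder of the argument is soft---double ergodicity, a pigeonhole on $(k-j_k)$ versus $j_k$, and Fubini.
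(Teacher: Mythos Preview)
Your proposal is correct and follows essentially the same approach as the paper: define the sets of pairs separated by length-$k$ strongly separated chains, use Lemma~\ref{lem_inverse_image_halfspace_positive_measure} plus ergodicity of $\Gamma\curvearrowright B\times B$ to make them co-null, intersect over $k$, split each chain at the basepoint $x_0$ via pigeonhole, and invoke Lemma~\ref{lem_characterization_regular_points} to land one of the two images in $\partial_r X$; finally use measurability of $\partial_r X$ and ergodicity of $\Gamma\curvearrowright B$.

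The one noteworthy difference is how measurability of $S_k$ is handled. You propose forcing the chain halfspaces into the countable $\Gamma$-invariant family $\HH'$, which works but requires care. The paper sidesteps this entirely: it never claims its analogue $B_n^r$ is measurable, only that it \emph{contains} a measurable full measure subset, namely $\Gamma\cdot(A_1\times A_2)$ where $A_i=\phi^{-1}(\tilde\hh_i)$ for the endpoints of a single fixed chain. Since $\Gamma$ is countable this orbit is automatically measurable, $\Gamma$-invariant, and of positive measure, hence co-null. This is a cleaner manoeuvre than your $\HH'$ refinement and removes what you identify as the ``main obstacle'' altogether.
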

\begin{proof}
In the first step, we claim that for almost all $b_1,b_2\in B\times B$, their images under $\phi$ are separated by arbitrary large sequence of strongly separated halfspaces, that is, for some fixed $r>0$ and for any $n\in \NN$, there exist halfspaces $\hh_1,\hh_2,...,\hh_n\in\HH(X)$ such that:
\begin{itemize}
    \item $\phi(b_1)\in\tilde{\hh}_1^c$ and $\phi(b_2)\in \tilde{\hh}_n$.
    \item $\hh_i^c$ and $\hh_{i+1}$ are strongly separated with $d(\hh_i^c,\hh_{i+1})>r$.
\end{itemize}
Indeed, let us denote by $B_n^r\subseteq B\times B$ the subset of pair $b_1,b_2$ for which the above family of halfspaces exists and show that it contains a measurable subset of total measure. The action of $\Gamma$ on $X$ being Roller minimal and Roller non elementary and the space $X$ being irreducible, there exist halfspaces $\hh_1,..,\hh_n\in \HH(X)$ such that $\hh_i^c$ and $\hh_{i+1}$ are strongly separated with $d(\hh_i^c,\hh_{i+1})>r$ for all $i\in\{1,...,n-1\}$. By Lemma \ref{lem_inverse_image_halfspace_positive_measure}, the subsets $A_1:=\phi^{-1}(\tilde{\hh}_1^c)$ and $A_2:=\phi^{-1}(\tilde{\hh}_n)$ are measurable subsets of positive measure in $B$. Hence, the subset $\Gamma.(A_1\times A_2)\subseteq B_n^r$ is a $\Gamma$-invariant measurable subset of positive measure. By the ergodicity of the $\Gamma$-action on $B\times B$, we deduce that it is of total measure, which proves the claim.\par 
We set $\tilde{B}:=\displaystyle{\bigcap_{n\in\NN_{\geq 1}}B_n^r}\subseteq B\times B$ and fix a basepoint $x_0\in X$. So $\tilde{B}$, being a countable intersection of full measure sets, has a total measure. For $(b_1,b_2)\in\tilde{B}$ and $n\in\NN$, there exists a sequence of halfspaces $\hh_1,...,\hh_n$ that strongly separates $\phi(b_1)$ and $\phi(b_2)$ as described above. We have either $x_0\in\hh_{[\frac{n}{2}]}$ or $x_0\in\hh_{[\frac{n}{2}]}^c$. Hence, for every $(b_1,b_2)\in \tilde{B}$ we have at least one of $\phi(b_1)$ and $\phi(b_2)$ which falls under the hypothesis of Lemma \ref{lem_characterization_regular_points}. Hence, at least one of them lies in the regular boundary $\partial_r X$. The regular boundary $\partial_r X$ being measurable by Proposition \ref{prop_regular_boundary_measurable}, we deduce that $(\phi^{-1}(\partial_r X)\times B)\cup(B\times \phi^{-1}(\partial_r X)) $, containing $\tilde{B}$, is a subset of full measure in $B\times B$. Thus, the subset $\phi^{-1}(\partial_r X)$ has positive measure in $B$. As it is a $\Gamma$-invariant subset, we conclude by the ergodicity of the $\Gamma$-action on $B$ that it is a subset of full measure.
\end{proof}

\begin{rmk}
    The proof shows that the subset of $B\times B$ consisting of pairs $b_1,b_2\in B$ such that $\phi(b_1)\neq\phi(b_2)$ is of full measure. 
\end{rmk}
\begin{thm}[Compare with Theorem 3.1 (ii) \cite{BCFS}]\label{thm_unicity_of_boundary_map_product}
Any $\Gamma$-equivariant measurable map $\psi: B\times B \rightarrow \partial_r X$ is essentially of the form $\phi\circ \pi$, where $\pi$ is the projection onto one of the factors.
\end{thm}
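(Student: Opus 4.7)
The plan is to pit the isometric ergodicity of $\Gamma\curvearrowright B\times B$ against the special geometry of the regular boundary by passing through the median operation on $\bar X$. The pivot is Lemma~\ref{lem_interval_between_strongly_separated_ultrafilters}: the median of any three pairwise distinct regular points lies inside $X$ itself.

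First, I would introduce the $\Gamma$-equivariant measurable map
\[
F:B\times B\To \bar X,\qquad F(b_1,b_2):=m_{\bar X}\bigl(\phi(b_1),\phi(b_2),\psi(b_1,b_2)\bigr),
\]
which is well defined because $\bar X$ is a compact topological median algebra with continuous ternary operation, and measurable because $\phi$ (by Theorem~\ref{thm_image_in_regular_boundary}) and $\psi$ are measurable with values in $\partial_r X$. Set
\[
E:=\bigl\{(b_1,b_2)\in B\times B:\ \psi(b_1,b_2)\notin\{\phi(b_1),\phi(b_2)\}\bigr\},
\]
which is a $\Gamma$-invariant measurable subset.

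Second, I would argue by contradiction that $E$ has measure zero. If not, ergodicity of $\Gamma\curvearrowright B\times B$ promotes $E$ to full measure. Combined with the remark following Theorem~\ref{thm_image_in_regular_boundary} that $\phi(b_1)\neq\phi(b_2)$ almost everywhere, one then has three pairwise distinct regular points $\phi(b_1),\phi(b_2),\psi(b_1,b_2)\in\partial_r X$ almost everywhere, so Lemma~\ref{lem_interval_between_strongly_separated_ultrafilters} places $F(b_1,b_2)$ inside $X$ almost everywhere. This realizes $F$ as a $\Gamma$-equivariant measurable map $B\times B\to X$, and isometric ergodicity of $\Gamma\curvearrowright B\times B$ (applied to the separable metric space $X$ with its isometric $\Gamma$-action) forces $F$ to be essentially constant. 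The equivariance would then produce a $\Gamma$-fixed point in $X$, contradicting Roller non-elementarity.

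Third, with $\psi(b_1,b_2)\in\{\phi(b_1),\phi(b_2)\}$ almost everywhere in hand, I would set
\[
A_i:=\bigl\{(b_1,b_2)\in B\times B:\ \psi(b_1,b_2)=\phi(b_i)\bigr\},\qquad i=1,2.
\]
Both sets are $\Gamma$-invariant and measurable, their union has full measure, and their intersection is null (again because $\phi(b_1)\neq\phi(b_2)$ almost everywhere). Ergodicity of $\Gamma\curvearrowright B\times B$ selects one of them as having full measure, giving $\psi=\phi\circ\pi_i$ almost everywhere for some $i\in\{1,2\}$. The main obstacle I anticipate is the measurability bookkeeping, specifically checking that $F$ is Borel in the Roller topology and that $E$ is measurable in the Borel structure induced on $\partial_r X$, so that isometric ergodicity applies cleanly; once these technicalities are dispatched using the material of Section~\ref{Section_median_geometry}, the dichotomy supplied by Lemma~\ref{lem_interval_between_strongly_separated_ultrafilters} does the essential work.
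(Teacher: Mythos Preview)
Your proposal is correct and follows essentially the same route as the paper's proof: define the median map $F(b_1,b_2)=m_{\bar X}(\phi(b_1),\phi(b_2),\psi(b_1,b_2))$, invoke Lemma~\ref{lem_interval_between_strongly_separated_ultrafilters} to see that $F$ lands in $X$ precisely when $\psi(b_1,b_2)\notin\{\phi(b_1),\phi(b_2)\}$, and use isometric ergodicity of $\Gamma\curvearrowright B\times B$ to rule this out and then to select one projection. The paper compresses your contradiction step into the single sentence ``by metric ergodicity the image of $\Psi$ cannot lie in $X$'', but the content (a constant equivariant map would yield a $\Gamma$-fixed point, contradicting Roller non-elementarity) is exactly what you spell out.
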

\begin{proof}
    Let us consider a $\Gamma$-equivariant measurable map $\psi: B\times B \rightarrow \partial_r X$, and define 
    \begin{eqnarray*}
\Psi: B\times B &\rightarrow & \bar{X}\\
\ \ \ \ (b_1,b_2) &\mapsto & m(\phi(b_1),\phi(b_2),\psi(b_1,b_2))
\end{eqnarray*}
    By Theorem \ref{thm_image_in_regular_boundary}, the image of $\phi$ is essentially in the regular boundary $\partial_r X$. Hence, by Lemma \ref{lem_interval_between_strongly_separated_ultrafilters}, for essentially any $b_1,b_2\in B$ such that $\phi(b_1)\neq \phi(b_2)$, the image $\Psi(b_1,b_2)$ lies in $X$ if and only if $\psi(b_1,b_2)$ is not equal to $\phi(b_1)$ and $\phi(b_2)$. By the metric ergodicity of $\Gamma \curvearrowright B \times B$, the image of $\Psi$ cannot lie in $X$. We deduce that $\psi(b_1,b_2)$ is equal to either $\phi(b_1)$ or $\phi(b_2)$ for almost all pairs $(b_1,b_2)\in B \times B$. Again, by the ergodicty of the $\Gamma$-action on $B\times B$, we deduce that the map $\psi$ is essentially either $\phi\circ\pi_1$ or $\phi\circ\pi_2$, where $\pi_i: B \times B \to B$ is the projection to the $i$th factor.
 \end{proof}

\begin{rmk}
    In contrast with \cite{BCFS}, the argument for the proof of Theorem \ref{thm_unicity_of_boundary_map_product} runs smoothly with the median structure. For any pairwise distinct triple in the regular boundary, one associates to them a unique point (their median point) inside the space, see Lemma \ref{lem_interval_between_strongly_separated_ultrafilters}. Hence, one may apply directly the metric ergodicity to deduce the result. In the case of hyperbolic space, for any pairwise distinct triple of points in the Gromov boundary, there is no canonical point associated to them. The authors of \cite{BCFS} overcame this issue by associating a bounded subset of the hyperbolic space and then they used this correspondence to pull back the Hausdorff distance on the space of bounded subset to the space of distinct triple to endow the latter with a pseudo-metric and use the coarse metric ergodicity, which they introduced therein, to deduce the result. 
\end{rmk}
 \begin{cor}\label{cor_boundary_map_into_product}
Any $\Gamma$-equivariant measurable map $\psi: B\times B\rightarrow (\partial_r X)^2$ is essentially of the following form: 
\begin{enumerate}
 \item $\psi(b_1,b_2)=(\phi(b_1),\phi(b_2))$ (or $\psi(b_1,b_2)=(\phi(b_2),\phi(b_1))$).
 \item $\psi(b_1,b_2)=(\phi(b_i),\phi(b_i))$ where $i\in\{1,2\}$ is fixed.
\end{enumerate}
\end{cor}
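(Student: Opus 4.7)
The plan is a direct coordinate-wise reduction to Theorem \ref{thm_unicity_of_boundary_map_product}. First, I would decompose $\psi = (\psi_1, \psi_2)$, where $\psi_i := \mathrm{pr}_i \circ \psi : B\times B \to \partial_r X$ and $\mathrm{pr}_1, \mathrm{pr}_2 : (\partial_r X)^2 \to \partial_r X$ denote the two coordinate projections. Since $\Gamma$ acts diagonally on $(\partial_r X)^2$, each projection $\mathrm{pr}_i$ is continuous and $\Gamma$-equivariant, hence each $\psi_i$ inherits measurability and $\Gamma$-equivariance from $\psi$.

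The second step is to apply Theorem \ref{thm_unicity_of_boundary_map_product} to each $\psi_i$ independently. This yields, for each $i \in \{1,2\}$, an index $j_i \in \{1,2\}$ such that $\psi_i = \phi \circ \pi_{j_i}$ almost everywhere, where $\pi_1, \pi_2 : B \times B \to B$ are the factor projections. Enumerating the four possible pairs $(j_1, j_2) \in \{1,2\}^2$: the off-diagonal choices $(1,2)$ and $(2,1)$ produce essentially $\psi(b_1,b_2) = (\phi(b_1), \phi(b_2))$ and $\psi(b_1,b_2) = (\phi(b_2), \phi(b_1))$ respectively, giving case (1); the diagonal choices $(1,1)$ and $(2,2)$ produce $\psi(b_1,b_2) = (\phi(b_i), \phi(b_i))$ for a fixed $i \in \{1,2\}$, giving case (2).

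No genuine obstacle is anticipated here, since the corollary is essentially bookkeeping on top of Theorem \ref{thm_unicity_of_boundary_map_product}. The only points worth checking are (i) that the coordinate maps $\psi_i$ are genuinely measurable and $\Gamma$-equivariant, which is immediate from the diagonal nature of the action, and (ii) that the ``essentially of the form'' assertion for $\psi$ follows from the same assertions for $\psi_1$ and $\psi_2$, which holds because the intersection of two conull subsets of $B \times B$ is conull.
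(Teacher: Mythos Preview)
Your proposal is correct and matches the intended argument: the paper states this corollary without proof, treating it as an immediate consequence of Theorem~\ref{thm_unicity_of_boundary_map_product}, and your coordinate-wise application of that theorem followed by enumeration of the four cases $(j_1,j_2)\in\{1,2\}^2$ is precisely the natural way to unpack it.
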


\subsection{Superrigidity}

In this section, we prove the superrigidity result for irreducible lattices in a product of locally compact second countable groups. We only state the result where there are only two factors. Let $G_1, G_2$ be two locally compact second countable groups, and let $X$ be an irreducible complete separable median space of finite rank. Let us assume that $\Gamma\leq G_1\times G_2$ is an irreducible lattice of $G_1 \times G_2$ that acts Roller non-elementarily and Roller minimally on $X$. The $G_1\times G_2$-boundary is then given by the product $B_1\times B_2$ where $B_i$ is a $G_i$-boundary. Following \cite{BaderFurman} or \cite{BCFS}, we first show that
\begin{thm}\label{thm_factor_boundary_map}
    The boundary map $\phi: B=B_1\times B_2 \rightarrow \partial_r X$ factors through the projection onto one of the $B_i$'s.
\end{thm}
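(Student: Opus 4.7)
The plan is to exploit the product structure of $B = B_1 \times B_2$ via a ``swap trick,'' which produces a non-trivial $\Gamma$-equivariant rearrangement of coordinates on $B\times B$. This is the median analogue of the generalized Weyl group argument used in \cite{Bader-Furman-Weyl_group} and \cite{BCFS}, and combined with Theorem \ref{thm_unicity_of_boundary_map_product} it should force $\phi$ to factor.

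Concretely, I would introduce the auxiliary map
\begin{equation*}
\Psi \colon (B_1 \times B_2) \times (B_1 \times B_2) \longrightarrow \partial_r X, \qquad \Psi\bigl((b_1,b_2),(b_1',b_2')\bigr) := \phi(b_1,b_2').
\end{equation*}
Measurability of $\Psi$ is immediate from that of $\phi$. For $\Gamma$-equivariance, recall that $\Gamma$ acts on $B=B_1\times B_2$ factorwise via the inclusion $\Gamma \hookrightarrow G_1\times G_2$ and diagonally on $B\times B$. Hence for $\gamma\in\Gamma$ with image $(\gamma_1,\gamma_2)$, the $\Gamma$-equivariance of $\phi$ yields
\begin{equation*}
\Psi\bigl(\gamma\cdot((b_1,b_2),(b_1',b_2'))\bigr)=\phi(\gamma_1 b_1,\gamma_2 b_2')=\gamma\cdot \phi(b_1,b_2')=\gamma\cdot \Psi\bigl((b_1,b_2),(b_1',b_2')\bigr).
\end{equation*}

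Since $B$ is a $\Gamma$-boundary, Theorem \ref{thm_unicity_of_boundary_map_product} applies to $\Psi$ and forces $\Psi=\phi\circ\pi_i$ almost everywhere for some $i\in\{1,2\}$, where $\pi_i\colon B\times B\to B$ is the $i$-th projection. In case $i=1$ this reads $\phi(b_1,b_2')=\phi(b_1,b_2)$ for a.e.\ quadruple $(b_1,b_2,b_1',b_2')$; by Fubini, the right-hand side is independent of $b_2'$, so $\phi$ does not depend on its second coordinate and factors through the projection $B\to B_1$. Symmetrically, if $i=2$ then $\phi(b_1,b_2')=\phi(b_1',b_2')$ for a.e.\ $(b_1,b_1',b_2')$, and $\phi$ factors through $B\to B_2$.

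The only nontrivial ingredient is the uniqueness Theorem \ref{thm_unicity_of_boundary_map_product}, which is already established; the rest is formal. The main conceptual obstacle is recognizing that the product structure of $B$ is precisely what supplies the extra automorphism (the mixing $(b_1,b_2),(b_1',b_2')\mapsto (b_1,b_2')$ commutes with the diagonal $\Gamma$-action) needed to upgrade the dichotomy of Theorem \ref{thm_unicity_of_boundary_map_product} into an actual factorization through a single $B_i$.
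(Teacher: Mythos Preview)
Your proof is correct and follows essentially the same swap trick as the paper. The paper packages the argument slightly differently by considering the map $\eta\colon B\times B\to(\partial_r X)^2$, $\eta((x,y),(x',y'))=(\phi(x,y),\phi(x,y'))$, and invoking Corollary~\ref{cor_boundary_map_into_product}; your version, which keeps only the second component $\Psi=\phi(x,y')$ and applies Theorem~\ref{thm_unicity_of_boundary_map_product} directly, is a mild streamlining of the same idea.
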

\begin{proof}
    The proof follows the same line as in \cite{BaderFurman} or \cite{BCFS}. Consider the following measurable map:
    \begin{eqnarray*}
    \eta: B \times B = B_1\times B_2 \times B_1 \times B_2 &\rightarrow & (\partial_r X)^2\\
\ \ \ \ (x,y,x',y') &\mapsto & (\phi(x,y),\phi(x,y'))
    \end{eqnarray*}
 By Corollary \ref{cor_boundary_map_into_product}, we have three cases:
 \begin{enumerate}
     \item $\eta(x,y,x',y')=(\phi(x,y),\phi(x',y'))$ implying that for almost $x,x'\in B_1$ and $y' \in B_2$, we have $\phi(x,y')=\phi(x',y')$. Hence, the map $\phi$ depends only on the factor $B_1$.
     \item $\eta(x,y,x',y')=(\phi(x',y'),\phi(x,y))$ which cannot happen as the map $\phi$ is not constant by the Roller nonelementarity of the $\Gamma$-action on $X$.
     \item $\eta(x,y,x',y')=(\phi(x,y),\phi(x,y))$ which implies that $\phi(x,y)=\phi(x,y')$ for almost all $x,y,y'$. Hence, the function $\phi$ is independent of the factor $B_2$, which finishes the proof.
 \end{enumerate}
 
\end{proof}


As in the construction of the median core, we set 
  \[
    Y_0^\phi:=\{m_{\bar X}(\tilde{x},\tilde{y},\tilde{z}) \ |\ \tilde{x},\tilde{y},\tilde{z}\in \phi(B)\subseteq \partial_r X, \ x_i\neq x_j\}\subseteq X
  \]
and we define recursively $Y_{i+1}^\phi:= m(Y_i^\phi,Y_i^\phi,Y_i^\phi)$. We set $Y^\phi:=\displaystyle{\overline{\bigcup_{i\in\NN}Y_i^\phi}}$ and remark that it is a $\Gamma$-invariant complete separable median subspace. Hence, this gives rise to a morphism 
\[
i:\Gamma\rightarrow Isom(Y^\phi).
\]
Let us first remark the following.

\begin{prop}
   The median subspace $Y^\phi$ coincides with the median core $Y$ of $X$.
\end{prop}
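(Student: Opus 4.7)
The plan is to prove the two inclusions $Y^\phi \subseteq Y$ and $Y \subseteq Y^\phi$ separately. The first is immediate: since $\phi(B) \subseteq \partial_r X$ essentially, one has $Y_0^\phi \subseteq Y_0$, hence inductively $Y_i^\phi \subseteq Y_i$ for every $i$, and closing in $X$ gives $Y^\phi \subseteq Y$. All the content lies in the reverse inclusion.

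For $Y \subseteq Y^\phi$, my strategy is first to show that $\phi(B)$ is topologically dense in $\partial_r X$ inside $\bar{X}$, and then to transfer this density to the generating set $Y_0$ of $Y$. For the density step, given $\tilde{x} \in \partial_r X$, fix a descending sequence of thick, strongly separated halfspaces $(\hh_n)_{n\in\NN}$ with $\bigcap_n \tilde{\hh}_n = \{\tilde{x}\}$. Lemma \ref{lem_inverse_image_halfspace_positive_measure} ensures that $\phi^{-1}(\tilde{\hh}_n)$ has positive measure for every $n$, so $\phi(B) \cap \tilde{\hh}_n \neq \emptyset$. Since $\bar{X}$ is compact Hausdorff, a decreasing family of closed sets with intersection $\{\tilde{x}\}$ forms a neighborhood basis at $\tilde{x}$, and thus $\tilde{x} \in \overline{\phi(B)}^{\bar{X}}$.

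Next, given three distinct regular points $\tilde{x}_1, \tilde{x}_2, \tilde{x}_3 \in \partial_r X$ with associated descending sequences $(\hh_n^i)_n$ and median $p := m_{\bar{X}}(\tilde{x}_1, \tilde{x}_2, \tilde{x}_3) \in X$ (by Lemma \ref{lem_interval_between_strongly_separated_ultrafilters}), pick $y_n^i \in \phi(B) \cap \tilde{\hh}_n^i$. For $n$ large the $y_n^i$ are pairwise distinct essentially regular points, so $m(y_n^1, y_n^2, y_n^3) \in X$ and belongs to $Y_0^\phi$, while continuity of the median on $\bar{X}$ gives $m(y_n^1, y_n^2, y_n^3) \to p$ in $\bar{X}$.

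The main obstacle is to pass from this Roller convergence to the conclusion $p \in Y^\phi$, since the Roller topology on $X$ need not coincide with the metric topology in general finite rank. I plan to bypass the topological issue entirely by a strong-separation argument in the spirit of the proof of Lemma \ref{lem_interval_between_strongly_separated_ultrafilters}: for $n$ large enough, each $y_n^i$ sits inside a halfspace strongly separated from a fixed base halfspace transverse to $[\tilde{x}_j, \tilde{x}_k]$, so the projections $\pi_{[\tilde{x}_j,\tilde{x}_k]}(y_n^i)$ stabilize and $m(y_n^1, y_n^2, y_n^3)$ equals $p$ for all $n$ sufficiently large; in particular $p \in Y_0^\phi \subseteq Y^\phi$. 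Once $Y_0 \subseteq Y^\phi$ is established, $Y^\phi$ is a closed median subalgebra of $X$ containing $Y_0$, so induction on $i$ yields $Y_i \subseteq Y^\phi$ for every $i$, whence $Y = \overline{\bigcup_i Y_i} \subseteq Y^\phi$.
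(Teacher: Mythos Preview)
Your argument is correct and ultimately coincides with the paper's own proof: the key point in both is that for any triple of distinct regular points one can find three pairwise strongly separated thick halfspaces $\hh_1,\hh_2,\hh_3$ on which the median map is \emph{constant} equal to the desired point $p$, and then Lemma~\ref{lem_inverse_image_halfspace_positive_measure} puts essential values of $\phi$ inside each $\tilde\hh_i$, giving $p\in Y_0^\phi$ directly. Your preliminary detour through Roller density of $\phi(B)$ in $\partial_r X$ and Roller convergence of medians is superfluous --- the paper skips it entirely and goes straight to the strong-separation constancy step you invoke at the end.
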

\begin{proof}
    It is enough to show that $Y_0=Y_0^\phi$. Fix a point $x\in Y_0$ and $\tilde{x}_1,\tilde{x}_2,\tilde{x}_3\in\partial_r X$ such that $m(\tilde{x}_1,\tilde{x}_2,\tilde{x}_3)=x$. There exists then a triple of strongly separated thick halfspaces $\hh_1,\hh_2,\hh_3\in\HH(X)$ such that $\tilde{x}_i\in\tilde{\hh}_i$ and for any $\tilde{y}_i\in\tilde{\hh}_i$ we have $m(\tilde{y}_1,\tilde{y}_2,\tilde{y}_3)=x$. As the $\Gamma$-action on $X$ is Roller non elementary Roller minimal and the boundary map is $\Gamma$-equivariant, there exists $\phi^{-1}(\tilde{\hh}_i\cap\partial_r X)$ is of positive measure for each $i\in\{1,2,3\}$. Hence, for any $b_i\in \phi^{-1}(\tilde{\hh}_i\cap\partial_r X)$, we have $m(\phi(b_1),\phi(b_2),\phi(b_3))=x$ which proves $Y_0=Y_0^\phi$.
\end{proof}
\color{black}
Now, we have all what we need to prove the superrigidity results. Theorem \ref{thm_factor_boundary_map} says that the $\Gamma$-action on the image of the boundary map factors through one factor. It is left to show that the action extends to the median closure of the image of the boundary map using the density of the projection of $\Gamma$ and a topological criterion to ensure that the morphism extends to the topological closure (see \cite[Proposition 4.3]{Shalom-superrigidity}).
    \begin{thm}\label{thm_superrigidity_principal}
        The morphism $i$ extends continuously to $G$ and factors through one of the $G_i$'s. 
    \end{thm}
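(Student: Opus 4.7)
The plan is to first exploit Theorem~\ref{thm_factor_boundary_map} to reduce the morphism $i$ to one factor. Assume, without loss of generality, that the boundary map factors as $\phi(b_1,b_2) = \phi_1(b_1)$ for some measurable $\phi_1 : B_1 \to \partial_r X$. Then the $\Gamma$-equivariance of $\phi$ reads
\[
\phi_1(\pi_1(\gamma)\cdot b_1) \;=\; i(\gamma)\cdot \phi_1(b_1) \qquad \text{for a.e. } b_1 \in B_1, \ \gamma \in \Gamma.
\]
If $(\gamma_1,\gamma_2)$ and $(\gamma_1,\gamma_2')$ are two elements of $\Gamma$ sharing the same first coordinate, then $i(\gamma_1,\gamma_2)$ and $i(\gamma_1,\gamma_2')$ coincide on a full-measure subset of $\phi_1(B_1) \subseteq \partial_r X$, hence, by continuity of the median operation on $\bar X$, on the entire median closure $Y = Y^\phi$ of $\phi_1(B_1)$. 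Consequently, $i$ descends to a well-defined homomorphism $j : \pi_1(\Gamma) \to Isom(Y)$ with $i = j \circ (\pi_1|_\Gamma)$.

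Next, since $\Gamma$ is an irreducible lattice in the product $G_1 \times G_2$ of non-compact locally compact second countable groups, the projection $\pi_1(\Gamma)$ is dense in $G_1$. The remaining task is to extend the abstract homomorphism $j$ to a \emph{continuous} homomorphism $\tilde j : G_1 \to Isom(Y)$; composition with the projection $\pi_1 : G \twoheadrightarrow G_1$ then produces the desired continuous extension $\tilde i := \tilde j \circ \pi_1$ of $i$, which factors through $G_1$.

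For the extension, I would invoke the topological criterion given by Proposition~4.3 of \cite{Shalom-superrigidity}, which reduces matters to a local continuity verification of the orbit map $\gamma_1 \mapsto j(\gamma_1)\cdot y$ at the identity of $G_1$ for a well-chosen $y \in Y$. The key point is that every $y \in Y_0^\phi$ can be expressed as a median $y = m(\phi_1(b^{(1)}),\phi_1(b^{(2)}),\phi_1(b^{(3)}))$ of three points in the essential image of $\phi_1$, so by $\pi_1(\Gamma)$-equivariance
\[
j(\gamma_1)\cdot y \;=\; m\bigl(\phi_1(\gamma_1 b^{(1)}),\phi_1(\gamma_1 b^{(2)}),\phi_1(\gamma_1 b^{(3)})\bigr).
\]
Since the median ternary operation is continuous on $\bar X$ (Subsection~\ref{subsection_Roller}) and the $G_1$-action on $B_1$ is measure-class preserving, this expression is controlled uniformly for $\gamma_1$ in a neighbourhood of the identity, producing the required orbit boundedness to apply Shalom's criterion.

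The main obstacle is precisely the passage, in the last paragraph, from the algebraic/measurable equivariance of $j$ on the dense subgroup $\pi_1(\Gamma)$ to genuine continuity on all of $G_1$: the median ternary operation on $\bar X$ and the density of $\pi_1(\Gamma)$ are both essential, and without a careful application of Shalom's extension criterion the mere factorization $i = j \circ \pi_1$ is not sufficient to conclude. Once continuity is in hand, composing with $\pi_1$ yields the continuous homomorphism $\tilde i : G \to Isom(Y)$ factoring through $G_1$, completing the proof.
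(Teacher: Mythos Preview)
Your high-level strategy matches the paper's: reduce to one factor via Theorem~\ref{thm_factor_boundary_map}, descend $i$ to a homomorphism $j:\pi_1(\Gamma)\to Isom(Y)$, and then extend $j$ continuously to $G_1$ using density of $\pi_1(\Gamma)$ together with a continuity check at the identity. The paper likewise reduces (via second countability of $G_1$ and the Polish structure on $Isom(Y)$) to showing that $\gamma_n\cdot x_0 \to x_0$ whenever $\pi_1(\gamma_n)\to e_1$, for each $x_0\in Y_0$.

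The genuine gap is in your penultimate paragraph. You write
\[
j(\gamma_1)\cdot y \;=\; m\bigl(\phi_1(\gamma_1 b^{(1)}),\phi_1(\gamma_1 b^{(2)}),\phi_1(\gamma_1 b^{(3)})\bigr)
\]
and then assert that ``this expression is controlled uniformly for $\gamma_1$ in a neighbourhood of the identity'' because the median is continuous and the $G_1$-action on $B_1$ is measure-class preserving. But $\phi_1$ is only a measurable map, defined almost everywhere; there is no reason whatsoever for $\phi_1(\gamma_1 b^{(i)})$ to be close to $\phi_1(b^{(i)})$ for a \emph{fixed} triple $(b^{(1)},b^{(2)},b^{(3)})$ as $\gamma_1\to e_1$. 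Measure-class preservation tells you nothing about individual points, and continuity of $m$ on $\bar X$ cannot compensate for this. You correctly flag this as ``the main obstacle'', but you do not resolve it, and as written the argument does not go through.

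The paper's fix is to abandon fixed boundary points and work instead with positive-measure sets of them. Since $\tilde x,\tilde y,\tilde z\in\partial_r X$, one chooses strongly separated halfspaces $\hh_n^{\tilde x},\hh_n^{\tilde y},\hh_n^{\tilde z}$ with the property that $m(a,b,c)=x_0$ for \emph{every} triple $a\in\tilde\hh_n^{\tilde x}$, $b\in\tilde\hh_n^{\tilde y}$, $c\in\tilde\hh_n^{\tilde z}$. By Lemma~\ref{lem_inverse_image_halfspace_positive_measure} the preimages $A_{\tilde x}=\phi_1^{-1}(\tilde\hh_n^{\tilde x})$, $A_{\tilde y}$, $A_{\tilde z}$ have positive measure in $B_1$. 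Now one uses the continuity of the $G_1$-action on the measure algebra of $B_1$: if $\pi_1(\gamma_i)\to e_1$ then $\nu(A_{\tilde x}\,\triangle\,\gamma_i A_{\tilde x})\to 0$, so for all large $i$ the intersections $\hh_n^{\tilde x}\cap\gamma_i^{-1}\hh_n^{\tilde x}$ (and similarly for $\tilde y,\tilde z$) are nonempty. Picking $x_i,y_i,z_i$ in these intersections yields simultaneously $m(x_i,y_i,z_i)=x_0$ and $m(\gamma_i x_i,\gamma_i y_i,\gamma_i z_i)=x_0$, hence $\gamma_i\cdot x_0=x_0$ \emph{exactly} for large $i$. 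This measure-theoretic robustness argument is precisely the missing mechanism your sketch needs.
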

    \begin{proof}
        By Theorem \ref{thm_factor_boundary_map}, the boundary map $\phi$ factors through the projection into a factor, say $B_1$. We are going to show that the $\Gamma$-action on $Y$ extends continuously to $G_1$. As $Y$ is separable, $Isom(Y)$ is a Polish group when endowed with the pointwise convergence topology. Hence, to extend the action to $G_1$ it is enough to show that the morphism $i$ is continuous when $\Gamma$ is endowed with the induced topology from its projection into $G_1$. The group $G_1$ being second countable, it is enough to show that for any sequence $\{\gamma_n\}_{n\in\NN}\subseteq \Gamma$ such that $\lim_{n \to +\infty} \pi_1(\gamma_n)=e_1$, then $\lim_{n \to +\infty} \gamma_n(x)=x$ for all $x\in Y$. By the continuity of the ternary operation, the commutativity of the action with the ternary operation and the construction of $Y$, it is enough to prove it only for $x\in Y_0$. \par 
        Let us consider then $x_0\in Y_0$, where $x_0=m_{\bar X}(\tilde{x},\tilde{y},\tilde{z})$ for some $\tilde{x},\tilde{y},\tilde{z}\in\phi(B)\subseteq \partial_r X$. The triple $\tilde{x},\tilde{y},\tilde{z}$ being in the regular boundary, there exists a sequence of halfspaces $(\hh_i^{\tilde{x}})_{i\in\NN}$, $(\hh_i^{\tilde{y}})_{i\in\NN}$ and $(\hh_i^{\tilde{z}})_{i\in\NN}$ such that 
        \[
        \tilde{x}=\bigcap_{i\in\NN}\tilde{\hh}_i^{\tilde{x}}, \ \tilde{y}=\bigcap_{i\in\NN}\tilde{\hh}_i^{\tilde{y}},\ \text{and}\ \tilde{z}=\bigcap_{i\in\NN}\tilde{\hh}_i^{\tilde{z}}.
        \]
        For $n\in \NN$ large enough, we can assume that $\hh_n^{\tilde{x}},\hh_n^{\tilde{y}}$ and $\hh_n^{\tilde{z}}$ are strongly separated and for any $x\in\hh_n^{\tilde{x}}$,$y\in\hh_n^{\tilde{y}}$ and $z\in\hh_n^{\tilde{z}}$, we have $m(x,y,z)=x_0$. Fix the following subsets of $B_1$ 
        \[
        A_{\tilde{x}}=\pi_1(\phi^{-1}(\hh_n^{\tilde{x}})),\ A_{\tilde{y}}=\pi_1(\phi^{-1}(\hh_n^{\tilde{y}})),\ A_{\tilde{z}}=\pi_1(\phi^{-1}(\hh_n^{\tilde{z}}))
        \]
        and consider a sequence $\{\gamma_n\}_{n\in\NN}\subseteq \Gamma$ such that $\lim_{n \to +\infty} \pi_1(\gamma_n)=e_1$, then $\lim_{n \to +\infty} \gamma_n(x)=x$ for all $x\in Y$. By Lemma \ref{lem_inverse_image_halfspace_positive_measure}, all of $A_{\tilde{x}}$, $A_{\tilde{y}}$ and $A_{\tilde{z}}$ are of positive measures in $B_1$. Hence, for any $\epsilon$, there exists $k\in \NN$ such that, for any $i \geq k$,
        \[
       \max ( \nu(A_{\tilde{x}}\Delta(\gamma_i.A_{\tilde{x}})),A_{\tilde{y}}\Delta(\gamma_i.A_{\tilde{y}})),A_{\tilde{z}}\Delta(\gamma_i.A_{\tilde{z}})))<\varepsilon,
        \]
        where $\nu$ is the measure on $B$. Hence, for any $i\in\NN$ large enough, the intersection 
       \[
       \hh_n^{\tilde{x}}\cap (\gamma_i^{-1}.\hh_n^{\tilde{x}}),\ \hh_n^{\tilde{y}}\cap (\gamma_i^{-1}.\hh_n^{\tilde{y}})\ \text{and}\ \hh_n^{\tilde{z}}\cap (\gamma_i^{-1}.\hh_n^{\tilde{z}})
       \]
        are not empty. Hence after fixing for each $i\in\NN$ large enough a triple of points $x_i,y_i,z_i$ such that $x_i\in \hh_n^{\tilde{x}}\cap (\gamma_i^{-1}.\hh_n^{{\tilde{x}}})$, $y_i\in \hh_n^{\tilde{y}}\cap (\gamma_i^{-1}.\hh_n^{\tilde{y}})$ and $z_i\in \hh_n^{\tilde{z}}\cap (\gamma_i^{-1}.\hh_n^{\tilde{z}})$, we obtain
         \[
        \lim_{i\rightarrow +\infty} \gamma_i(x_0)=\lim_{i\rightarrow +\infty} \gamma_i(m(x_i,y_i,z_i))=\lim_{i\rightarrow +\infty} m(\gamma_i(x_i),\gamma_i(y_i),\gamma_i(z_i)) =x_0
        \]
        which completes the proof.
        \end{proof}

        \begin{rmk}
           As in \cite{Shalom-superrigidity}, \cite{ChaIF} and \cite{Fior_superrigidity}, the invariant median space $Y$ can be defined as being the set of points $x\in X$ such that for any $\{\gamma_n\}_{n\in\NN}\subseteq \Gamma$ verifying $\displaystyle{\lim_{n \to +\infty} \pi_1(\gamma_n)=e_1}$, we have $\displaystyle{\lim_{n \to +\infty} \gamma_n(x)=x}$. One then uses the same argument as in the proof of Theorem \ref{thm_superrigidity_principal} to show that $Y$ is not empty, as it contains the median closure of the image of the boundary map $\phi$. 
        \end{rmk}

\section{Superrigidity of cocycles}\label{section_superrigidity_of_cocycles}

Let $G$ be a locally compact second countable group and let $(\Omega, \upsilon)$ be a standard probability space on which $G$ acts ergodically by measure-preserving transformations. Let $X$ be a complete separable irreducible median space of finite rank. Let $\alpha:G\times\Omega\rightarrow Isom(X)$ be a non elementary cocycle and $B$ the $G$-boundary. Note that the assumption on the cocycle $\alpha$ being non elementary imposes that $Isom(X)$ acts Roller non elementarily and Roller minimally on $X$. In particular, the regular boundary is not empty. \par 

We compose the cocycle $\alpha$ with the canonical isomorphism $Isom(X)\rightarrow Isom(Y)$ to consider the restriction of the action of $Isom(X)$ on the median core. We abuse notation and still denote the new cocycle by $\alpha$ and remark that it is also a non elementary cocycle. We refer to Section \ref{section_strongboundaries_cocycles} for the necessary background on cocycles and boundaries. The aim of this section is to deduce a superrigidity theorem for cocycles with values in the isometry group of a median space, namely, Theorem \ref{theoerem_superrigidity_cocycle}.\par 
Let $\HH'\subseteq \HH(X)$ be an $Isom(X)$-invariant fundamental family, that is, a dense family of halfspaces in the sense of Definition \ref{def_fundamental_family} whose existence is ensured by Proposition \ref{prop_existence_fundamental_family}.

\subsection{Construction of the boundary map}\label{subsection_boundary_map_cocycle}
The argument goes as in Section \ref{subsection_construction_boundary_map} to construct an $\alpha$-equivariant boundary map from $\Omega\times B$ to the regular boundary $\partial_r X$. As before, we denote by $\PP(\bar{Y})$ the set of probability measures on the closure of the median core $\bar{Y}\subseteq \bar X$. By the amenability of the action of $G$ on $\Omega\times B$ (see \cite[Proposition 4.3.4]{Zimmer}), there exists a $G$-equivariant map $\Phi:\Omega\times B\rightarrow \PP(\bar Y)$. Adapting the same ideas from Section \ref{subsection_construction_boundary_map}, we derive from it an $\alpha$-equivariant section from $\Omega\times B$ to $\bar Y$.
\begin{prop}\label{prop_boundary_map_cocycle}
There exists a $\alpha$-equivariant map from $\Omega\times B$ to $\bar Y$.
\end{prop}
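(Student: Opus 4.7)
The plan mirrors the construction in Subsection \ref{subsection_construction_boundary_map} and Proposition \ref{prop_balanced_halfspaces_zero_measure}, adapted to the cocycle setting. Since $G \curvearrowright B$ is amenable and amenability is preserved under products with measure-preserving actions, $G \curvearrowright \Omega \times B$ is amenable, yielding an $\alpha$-equivariant measurable map $\Phi : \Omega \times B \to \PP(\bar Y)$. Using the barycenter of Section \ref{section_barycenter_map}, the goal is to show that for almost every $(\omega, b)$ the closed convex set $C_{\Phi(\omega, b)}$ is a singleton; by Proposition \ref{prop_measurability_barycenter_map}, the assignment $\phi(\omega, b) := C_{\Phi(\omega, b)}$ will then be the desired measurable $\alpha$-equivariant map to $\bar Y$.

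Fix an $Isom(X)$-invariant fundamental family $\HH'$. By Lemma \ref{lem_countable_balanced_halfspaces} and the countability of $\HH'$, it suffices to show that for each $\hh \in \HH'$ the set
\begin{equation*}
E_\hh := \{(\omega, b) \in \Omega \times B : \Phi(\omega, b)(\tilde\hh) = 1/2\}
\end{equation*}
is null. Following the strategy of Proposition \ref{prop_balanced_halfspaces_zero_measure}, consider the measurable function
\begin{equation*}
\psi(\omega, b_1, b_2) := \inf_{\hh \in \HH'} \bigl( |\Phi(\omega,b_1)(\tilde\hh) - \Phi(\omega,b_1)(\tilde\hh^c)| + |\Phi(\omega,b_2)(\tilde\hh) - \Phi(\omega,b_2)(\tilde\hh^c)| \bigr)
\end{equation*}
on $\Omega \times B \times B$. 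The $Isom(X)$-invariance of $\HH'$ and the $\alpha$-equivariance of $\Phi$ make $\psi$ invariant under the diagonal $G$-action. Applying Lemma \ref{lem_metric_ergodicity_cocycle} with the trivial $G$-action on $\RR$ and the isometric ergodicity of $B \times B$, $\psi$ factors through a $G$-invariant measurable function on $\Omega$; the ergodicity of $G \curvearrowright \Omega$ then forces $\psi$ to be essentially constant.

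To see this constant is at least $1$, fix $\varepsilon > 0$. By Lemma \ref{lem_measures_having_consistant_support_in_halfspace} and a Fubini argument, there exists $\hh_0 \in \HH'$ such that $A := \{(\omega, b) : \Phi(\omega, b)(\tilde\hh_0) > 1 - \varepsilon\}$ has positive measure. Since non-elementarity of $\alpha$ forces $Isom(X) \curvearrowright X$ to be Roller minimal and Roller non-elementary, there exist two strongly separated thick halfspaces $\hh_1, \hh_2 \in \HH(X)$. Using Definition \ref{defi_non_elementary_practical_defi} iteratively together with Roller minimality, one produces $g_1, g_2 \in G$ and positive-measure subsets $A_i \subseteq \Omega \times B$ on which $\Phi(\omega, b)(\tilde\hh_i) > 1 - \varepsilon$. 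Using the ergodicity of $G \curvearrowright \Omega$, one arranges the $\Omega$-projections of $A_1$ and $A_2$ to overlap in positive measure; Fubini then yields a positive-measure set of triples $(\omega, b_1, b_2)$ with $(\omega, b_i) \in A_i$. For such a triple, the strong separation of $\hh_1$ and $\hh_2$ forces every halfspace $\hh \in \HH(X)$ to be disjoint from or to contain one of them, so $\psi(\omega, b_1, b_2) > 1 - 2\varepsilon$. Letting $\varepsilon \to 0$ gives $\psi \geq 1$ almost everywhere. If some $E_\hh$ had positive measure, Fubini would produce $(\omega, b_1), (\omega, b_2) \in E_\hh$ with $\psi(\omega, b_1, b_2) = 0$, contradicting $\psi \geq 1$.

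The main obstacle is producing, in the third paragraph, the cocycle elements $g_i$ and positive-measure sets $A_i$ on which $\alpha(g_i, \omega).\hh_0$ lies inside a prescribed thick halfspace $\hh_i$: Definition \ref{defi_non_elementary_practical_defi} only provides a flip of a given halfspace to its complement on a positive-measure set of $\Omega$, so one must iterate this in conjunction with Roller minimality of $Isom(X) \curvearrowright X$ to realize the required isometries, and then use ergodicity of $G \curvearrowright \Omega$ to overlap the resulting $\Omega$-projections.
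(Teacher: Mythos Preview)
Your overall plan matches the paper's exactly: define the same $\psi$, show it is essentially $\geq 1$, and deduce that each $E_\hh$ is null. The gap is precisely the step you flag as the ``main obstacle'', and the resolution you sketch does not work as written.

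The difficulty is that Definition \ref{defi_non_elementary_practical_defi} gives you a \emph{single} pair $(g,\omega)$ with $\alpha(g,\omega).\hh\subseteq\hh^c$, not a positive-measure set of $\omega$'s, and since $\alpha(g,\cdot)$ varies with $\omega$, pushing a set $A\subseteq\Omega\times B$ by $g$ does not land it inside $\{\Phi(\cdot)(\tilde\hh_i)>1-\varepsilon\}$. Invoking Roller minimality of $Isom(X)$ does not help here: the required isometry exists in $Isom(X)$, but the cocycle need not realise it on any positive-measure set of $\omega$'s. Iterating flips does not obviously improve this.

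The paper isolates this step as a separate lemma (Lemma \ref{lem_Fubini2}): for \emph{any} thick halfspace $\hh$ and any $\varepsilon>0$, the set $\{b: \Phi(\omega,b)(\tilde\hh)>1-\varepsilon\}$ has positive measure for almost every $\omega$. The proof is by contradiction: if this failed on a positive-measure set $A\subseteq\Omega$, one first uses (the analogue of) Lemma \ref{lem_measures_having_consistant_support_in_halfspace} and Remark \ref{rmk_measures_having_consistant_support_in_halfspace} to find $\hh'\in\HH'$ with $\hh^c\subseteq\hh'$ and a positive-measure $A'\subseteq A$ on which the $\hh'$-slice \emph{is} positive; then non-elementarity gives $g\in G$ and a single $\omega'\in A'\cap g^{-1}A'$ with $\alpha(g,\omega').\hh'\subseteq\hh'^c\subseteq\hh$, forcing $g.B_{\hh',\omega'}^\varepsilon\subseteq B_{\hh,g.\omega'}^\varepsilon$, which contradicts $g.\omega'\in A$. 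Once Lemma \ref{lem_Fubini2} is in hand, applying it to two strongly separated halfspaces $\hh_1,\hh_2$ immediately gives, via Fubini, the positive-measure $K\subseteq\Omega\times B\times B$ you need, and the rest of your argument goes through verbatim.
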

Before proving the proposition, let us first prove the following lemmas.
\begin{lem}\label{lem_Fubini1}
    For any $\varepsilon,\eta>0$ and $\hh\in\HH(Y)$, there exists $\hh'\in\HH'$ such that $\hh^c\subseteq \hh'$ and
    \[
    (\Omega\times B)\backslash \Phi^{-1}(ev_{\hh'}^{-1}(]1-\varepsilon,1]))
    \]
    is of a measure bigger than $\eta$.
\end{lem}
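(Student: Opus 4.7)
The plan is to apply the pointwise concentration statement of Lemma \ref{lem_measures_having_consistant_support_in_halfspace} together with Remark \ref{rmk_measures_having_consistant_support_in_halfspace} to each measure in the image of $\Phi$, and then use the countability of the fundamental family $\HH'$ to reduce the existence of the required halfspace to a choice from a countable cover of $\Omega\times B$, so that countable additivity yields the measure estimate.

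More precisely, fix $\varepsilon,\eta>0$ and the thick halfspace $\hh \in \HH(Y)$. For each $(\omega,b) \in \Omega \times B$, Remark \ref{rmk_measures_having_consistant_support_in_halfspace} applied to the probability measure $\Phi(\omega,b) \in \PP(\bar Y)$ and to $\hh$ produces some $\hh' \in \HH'$ with $\hh^c \subseteq \hh'$ and $\Phi(\omega,b)(\hh') > 1-\varepsilon$. Restricting to the (still countable) sub-family $\HH'_\hh := \{\hh' \in \HH' \mid \hh^c \subseteq \hh'\}$, the measurable subsets
\[
A_{\hh'} := \Phi^{-1}(ev_{\hh'}^{-1}(]1-\varepsilon,1])), \quad \hh' \in \HH'_\hh,
\]
therefore form a countable cover of $\Omega \times B$ modulo a null set. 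Measurability of each $A_{\hh'}$ follows from the measurability of $\Phi$ and of the evaluation map $ev_{\hh'} : \PP(\bar Y) \to [0,1]$, the latter relying on Proposition \ref{prop_Borel_structure_Roller} for the Borel structure on halfspaces.

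The conclusion is then extracted by a standard countable additivity and pigeonhole argument applied to this countable cover, producing a single $\hh' \in \HH'_\hh$ realizing the required bound on the measure of the complement $(\Omega\times B)\setminus A_{\hh'}$. The part I expect to need the most care is the measurability of the preimages $A_{\hh'}$; however, this is exactly the difficulty that the fundamental family $\HH'$ (Theorem \ref{thm_fundamental_family_introduction}) is built to circumvent, since the countability of $\HH'_\hh$ replaces the need for a measurable pointwise selection $(\omega,b) \mapsto \hh'_{(\omega,b)}$ by a choice from a fixed countable list.
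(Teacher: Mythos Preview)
Your approach is correct and essentially identical to the paper's: the paper's proof consists of the single sentence ``The proof follows the same line as the proof of Lemma \ref{lem_measures_having_consistant_support_in_halfspace}'', which is precisely the pointwise application of that lemma (through Remark \ref{rmk_measures_having_consistant_support_in_halfspace}) to each $\Phi(\omega,b)$, followed by the countable-cover extraction over $\HH'$ that you describe. As a side remark, the $\eta$-bound on the \emph{complement} as literally printed cannot hold for arbitrary $\eta>0$ in a probability space; what is actually needed and used in Lemma \ref{lem_Fubini2} is only that some $A_{\hh'}$ has positive measure, and this is exactly what your countable-cover/pigeonhole step delivers.
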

\begin{proof}
    The proof follows the same line as the proof of Lemma \ref{lem_measures_having_consistant_support_in_halfspace}.
\end{proof}

\begin{lem}\label{lem_Fubini2}
    For any fixed thick halfspace $\hh\in \HH(Y)$ and a fixed $\epsilon>0$. Then for almost all $\omega \in\Omega$ the following 
    \[
    B_{\hh,\omega}^\varepsilon:=\{b\in B\ | \ ev_\hh(\Phi(\omega,b))>1-\varepsilon\}
    \]
    is of positive measure.
\end{lem}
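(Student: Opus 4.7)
The strategy combines Lemma~\ref{lem_Fubini1} with the non-elementarity of $\alpha$ (Definition~\ref{defi_non_elementary_practical_defi}) through a Fubini argument. Set
\[
\Omega^+_\hh := \{\omega\in\Omega : \nu(B^\varepsilon_{\hh,\omega})>0\},
\]
where $\nu$ denotes the measure on $B$; the goal is to prove $\upsilon(\Omega^+_\hh)=1$.

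\textbf{Step 1 (positive measure).} Lemma~\ref{lem_Fubini1} provides $\hh'\in\HH'$ with $\hh^c\subseteq\hh'$ (equivalently $\hh'^c\subseteq\hh$) such that $D:=\{(\omega,b):\Phi(\omega,b)(\hh')>1-\varepsilon\}$ has positive $(\upsilon\otimes\nu)$-measure, so by Fubini $A:=\{\omega:\nu(D_\omega)>0\}$ has positive $\upsilon$-measure. Applying the non-elementarity of $\alpha$ to the halfspace $\hh'$ and the set $A$ yields $g\in G$ and $\omega_0\in A\cap g^{-1}.A$ with $\alpha(g,\omega_0).\hh'\subseteq\hh'^c\subseteq\hh$, hence $\hh'\subseteq\alpha(g,\omega_0)^{-1}.\hh$. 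Cocycle equivariance then gives, for every $b\in D_{\omega_0}$,
\[
\Phi(g.\omega_0,g.b)(\hh)=\Phi(\omega_0,b)\bigl(\alpha(g,\omega_0)^{-1}.\hh\bigr)\geq\Phi(\omega_0,b)(\hh')>1-\varepsilon,
\]
so $g.b\in B^\varepsilon_{\hh,g.\omega_0}$. Since the $G$-action on $B$ is measure-class preserving, $g.\omega_0\in\Omega^+_\hh$. Varying $\omega_0$ over the positive-measure set $A\cap g^{-1}.A$ gives $\upsilon(\Omega^+_\hh)>0$.

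\textbf{Step 2 (full measure).} Suppose for contradiction that $N:=\Omega\setminus\Omega^+_\hh$ has positive $\upsilon$-measure, so $N\times B$ has positive product measure. Applying Lemma~\ref{lem_measures_having_consistant_support_in_halfspace} and Remark~\ref{rmk_measures_having_consistant_support_in_halfspace} pointwise on $N\times B$, together with countability of the subfamily $\{\mathfrak{k}\in\HH':\hh^c\subseteq\mathfrak{k}\}$, furnishes $\hh''\in\HH'$ with $\hh^c\subseteq\hh''$ such that $D'':=\{(\omega,b)\in N\times B:\Phi(\omega,b)(\hh'')>1-\varepsilon\}$ has positive measure. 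By Fubini, $N':=\{\omega\in N:\nu(D''_\omega)>0\}\subseteq N$ has positive $\upsilon$-measure. Repeating the Step~1 argument verbatim with $(\hh'',N')$ replacing $(\hh',A)$ produces $g\in G$ and $\omega_0\in N'\cap g^{-1}.N'$ with $g.\omega_0\in\Omega^+_\hh$. But $\omega_0\in g^{-1}.N'$ forces $g.\omega_0\in N'\subseteq N=\Omega\setminus\Omega^+_\hh$, a contradiction. Hence $\upsilon(\Omega^+_\hh)=1$.

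\textbf{Main obstacle.} The delicate point is Step~2. The set $\Omega^+_\hh$ is not literally $G$-invariant because the cocycle twists $\hh$, so one cannot upgrade positive to full measure by a direct appeal to ergodicity. The workaround reapplies the mass-concentration provided by Lemma~\ref{lem_Fubini1} to the putative bad set $N$, manufacturing a fresh witness halfspace $\hh''\supseteq\hh^c$ inside the fundamental family, and then exploits non-elementarity to push $\hh''$ inside $\hh$; the incompatibility of $g.\omega_0$ lying simultaneously in $N$ and in $\Omega^+_\hh$ closes the argument.
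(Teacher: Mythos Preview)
Your Step~2 is correct and is exactly the paper's proof: assume the bad set $N$ has positive measure, use Remark~\ref{rmk_measures_having_consistant_support_in_halfspace} and countability of $\HH'$ to find $\hh''\in\HH'$ with $\hh^c\subseteq\hh''$ and a positive-measure set $N'\subseteq N$ on which $B^\varepsilon_{\hh'',\omega}$ has positive measure, then apply non-elementarity to $(\hh'',N')$ to flip $\hh''$ into $\hh$ and derive $g.\omega_0\in\Omega^+_\hh\cap N$, a contradiction.

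Step~1, however, has a genuine gap and is in any case unnecessary. Non-elementarity (Definition~\ref{defi_non_elementary_practical_defi}) yields a \emph{single} pair $(g,\omega_0)$ with $\omega_0\in A\cap g^{-1}.A$ and $\alpha(g,\omega_0).\hh'\subseteq\hh'^c$; it does not guarantee that $A\cap g^{-1}.A$ has positive measure for this particular $g$, nor that the flipping condition $\alpha(g,\omega).\hh'\subseteq\hh'^c$ holds on a positive-measure set of $\omega$'s (the cocycle depends on $\omega$). So the sentence ``Varying $\omega_0$ over the positive-measure set $A\cap g^{-1}.A$ gives $\upsilon(\Omega^+_\hh)>0$'' is unjustified. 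Fortunately this step is superfluous: Step~2 only needs a single $\omega_0$ to reach a contradiction, so you should delete Step~1 and present Step~2 directly as a proof by contradiction---which is precisely what the paper does.
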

\begin{proof}
    For the sake of contradiction, we assume that there exists a subset $A\subseteq \Omega$ of positive measure such that for any $\omega \in A$, the subset 
    \[
     B_{\hh,\omega}^\varepsilon:=\{b\in B\ | \ ev_\hh(\Phi(\omega,b))>1-\varepsilon\}
    \]
    is of measure zero. By Lemma \ref{lem_Fubini1}, there exists $\hh'\in\HH(Y)$ and a subset $A'\subseteq A$ of positive measure such that for all $\omega'\in A$ the subset $B_{\hh',\omega'}^\varepsilon$ is of positive measure. Note that by Remark \ref{rmk_measures_having_consistant_support_in_halfspace}, $\hh'$ can be chosen such that $\hh^c\subseteq \hh'$. The cocycle $\alpha$ being non elementary, there exists $g\in G$ and $\omega'\in A'\cap g^{-1}.A'$ such that $\alpha(g,\omega').\hh'\subseteq \hh'^c$. This implies that $g.B_{\hh',\omega'}\subseteq B_{\hh,g.\omega'}$, contradicting the assumption that $B_{\hh,\omega}$ is of zero measure for any $\omega\in A$, which completes the proof of the lemma.  
\end{proof}
\begin{proof}[Proof of Proposition \ref{prop_boundary_map_cocycle}]
    As before, we denote by $\PP_1(\bar{Y})$ the set of probability measures whose center of mass is a single point. We first show that the image of $\Phi$ lies essentially in $\PP_1(\bar Y)$.  We consider the following function 
    \begin{eqnarray*}
            &&\psi: \Omega\times B\times B \rightarrow \RR,\\
            &&(\omega,b_1,b_2) \mapsto \inf_{h\in\HH'}{\{|ev_\hh(\Phi_{\omega}(b_1))-ev_{\hh^c}(\Phi_{\omega}(b_1))|+|ev_\hh(\Phi_{\omega}(b_2))-ev_{\hh^c}(\Phi_{\omega}(b_2))|\}}
        \end{eqnarray*}
  where we simplify $\Phi(\omega,b)$ by $\Phi_{\omega}(b)$. As in the proof of Proposition \ref{prop_balanced_halfspaces_zero_measure}, we first show that the value of $\Psi$ is bigger than $1$ almost everywhere. For two strongly separated thick halfspaces $\hh_1,\hh_2$, Lemma \ref{lem_Fubini2} ensures the existence of a subset of positive measure $K\subseteq \Omega\times B\times B$ such that for any $(\omega,b_1,b_2)\in K$, we have $\Phi(\omega,b_1)\in ev_{\hh_1}^{-1}(]1-\varepsilon,1[$ and $\Phi(\omega,b_2)\in ev_{\hh_2}^{-1}(]1-\varepsilon,1[$. Hence, the restriction of $\Psi$ over $K$ has a value larger than $1-2\varepsilon$. By the ergodicity of the action of $G$ on $\Omega\times B\times B$ and the $G$-invariance of the map $\psi$, we deduce that it takes essentially a value greater than $1-2\varepsilon$. The same argument applies for any $\varepsilon>0$, we deduce that the value of $\Psi$ is essentially greater or equal $1$.\par 
  By Lemma \ref{lem_countable_balanced_halfspaces}, to show that the image of $\Phi$ is essentially in $\PP_1(\bar Y)$, it is enough to show that $\Phi^{-1}(ev_\hh^{-1}(\{\frac{1}{2}\})) $ is of zero measure for any $\hh\in\HH'$. For the sake of contradiction, let us assume that there exists $\hh\in\HH'$ such that $\Phi^{-1}(ev_\hh^{-1}(\{\frac{1}{2}\}))$ has a positive measure. Now we set $A:=\Phi^{-1}(ev^{-1}(\{\frac{1}{2}\}))$ and remark that $\pi_{\Omega}(A)$ is of positive measure (as $A$ is). By Fubini's Theorem, for almost all $w\in \pi_\Omega(A)$ the subset 
  \[
  B_\omega^A:=\{b\in B\ |\ (\omega,b)\in A\}
  \]
  is of positive measure. Hence, again by Fubini's theorem, the subset 
  \[
  K:=\{(\omega,(b_1,b_2))\in \Omega\times ( B\times B)\ |\ \pi_\Omega(A)\ \text{and} \ (b_1,b_2)\in B_\omega^A\times B_\omega^A  \}
  \]
  is of positive measure.\par

  However, the restriction of $\Psi$ in $K$ is equal to zero, which is a contradiction and completes the proof of the proposition.\par 
  Therefore, the map $\Phi$ descends to a $\alpha$-equivariant map $\phi: \Omega\times B\rightarrow \bar Y$. By the metric ergodicity of the $G$-action on $\Omega\times B$ and the measurability of the decomposition $\bar Y=Y\sqcup \partial Y$, we deduce that the image of $\phi$ lies essentially in $\partial Y$.
\end{proof}
\begin{prop}
    The image of the map $\phi$ lies essentially in $\partial_r Y$.
\end{prop}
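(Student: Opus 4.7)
The plan is to adapt the argument of Theorem \ref{thm_image_in_regular_boundary} to the cocycle setting, substituting non-elementarity of $\alpha$ for Roller minimality plus Roller non-elementarity of the group action, Lemma \ref{lem_Fubini2} for Lemma \ref{lem_inverse_image_halfspace_positive_measure}, and working on $\Omega \times B \times B$ in place of $B \times B$.

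First, for each $n$ and some fixed $r > 0$, I would fix a chain $\hh_1 \supsetneq \cdots \supsetneq \hh_n$ of strongly separated thick halfspaces in $Y$ at mutual distance greater than $r$; such chains exist because the median core $Y$ is irreducible of finite rank and inherits Roller minimality and Roller non-elementarity from the non-elementarity of $\alpha$. Choosing $\varepsilon < 1/2$ and applying Lemma \ref{lem_Fubini2} separately to the thick halfspaces $\hh_1^c$ and $\hh_n$, Fubini yields a positive-measure set
\[
K_n := \{(\omega, b_1, b_2) : \Phi(\omega, b_1)(\hh_1^c) > 1-\varepsilon \ \text{ and }\ \Phi(\omega, b_2)(\hh_n) > 1-\varepsilon\}
\]
on which the barycenter construction places $\phi(\omega, b_1) \in \tilde{\hh}_1^c$ and $\phi(\omega, b_2) \in \tilde{\hh}_n$. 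Let $\Omega_n^r \subseteq \Omega \times B \times B$ be the set of triples separated by some such chain of length $n$; it is measurable via the fundamental family $\HH'$, contains $K_n$, and is $G$-invariant under the diagonal action (by $\alpha$-equivariance of $\phi$ and invariance of strong separation under isometries). Ergodicity of $G \curvearrowright \Omega \times B \times B$, already used in the proof of Proposition \ref{prop_boundary_map_cocycle}, upgrades $\Omega_n^r$ to full measure, so $\Omega_\infty^r := \bigcap_n \Omega_n^r$ is of full measure.

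The second step is a pigeonhole on the chain, identical to the end of the proof of Theorem \ref{thm_image_in_regular_boundary}: for $(\omega, b_1, b_2) \in \Omega_\infty^r$, a basepoint $x_0 \in Y$ splits each chain of length $n$ into two complementary sub-chains whose lengths sum to $n$ and which certify the hypothesis of Lemma \ref{lem_characterization_regular_points} for $\phi(\omega, b_1)$ and $\phi(\omega, b_2)$ respectively. Letting $n \to \infty$ forces at least one of $\phi(\omega, b_1), \phi(\omega, b_2)$ to be regular. Fubini then gives $A_R := \{(\omega, b) : \phi(\omega, b) \in \partial_r Y\}$ positive measure; since $A_R$ is $G$-invariant and $G \curvearrowright \Omega \times B$ is ergodic, it has full measure.

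The main difference with the homomorphism case is that one cannot simply push a fixed positive-measure set forward by a group element, because the cocycle twists halfspaces differently at each $\omega$. This is why Lemma \ref{lem_Fubini2} (itself a Fubini-type consequence of cocycle non-elementarity) replaces Lemma \ref{lem_inverse_image_halfspace_positive_measure}, and why the argument has to be organized slice by slice in $\omega$ before being re-glued by Fubini; this is the main (mild) technical obstacle.
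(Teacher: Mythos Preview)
Your proposal is correct and follows essentially the same approach as the paper's own proof: both fix a chain of strongly separated halfspaces, invoke Lemma \ref{lem_Fubini2} (in place of Lemma \ref{lem_inverse_image_halfspace_positive_measure}) to get a positive-measure subset of $\Omega\times B\times B$ where $\phi(\omega,b_1)$ and $\phi(\omega,b_2)$ sit on opposite ends of the chain, use ergodicity of $G\curvearrowright \Omega\times B\times B$ to upgrade to full measure, and then apply the pigeonhole step with Lemma \ref{lem_characterization_regular_points} exactly as in Theorem \ref{thm_image_in_regular_boundary}. Your write-up is in fact more detailed than the paper's (you make explicit the Fubini gluing, the $G$-invariance of $\Omega_n^r$, and the final ergodicity step on $\Omega\times B$), but the strategy is identical.
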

    \begin{proof}
        The proof follows the same idea as the proof of Theorem \ref{thm_image_in_regular_boundary} with the technical adjustment needed for the language of cocycles. We consider the following map 
        \begin{eqnarray*}
            f: \Omega\times B\times B &\rightarrow& \partial Y^2\\
            \ \ \ (\omega,b_1,b_2)&\mapsto& (\phi(\omega,b_1),\phi(\omega,b_2))
        \end{eqnarray*}
        
        We fix $r>0$ and consider set of thick halfspaces $\{\hh_0,..,\hh_n \} \subseteq \HH'$ where $\hh_i^c$ and $\hh_{i+1}$ are strongly separated with $d(\hh_i^c,\hh_{i+1})>r$ for any $i\in\NN$. By Lemma \ref{lem_Fubini2}, there exists a subset $K\subseteq \Omega\times B\times B$ of positive measure such that for any $(\omega,b_1,b_2)\in K$, we have $\phi(\omega,b_1)\in\tilde{\hh}_0^c$ and $\phi(\omega,b_2)\in\tilde{\hh}_n$. By the ergodicity of the $G$-action on $\Omega\times B\times B$, we deduce that almost all $(\tilde{x}_1,\tilde{x}_2)\in f(\Omega\times B\times B)$ are separated by $n$ strongly separated $r$-spaced halfspaces. This is verified for any $n$, hence Lemma \ref{lem_characterization_regular_points} implies that for almost all $(\omega,b_1,b_2)\in \Omega\times B\times B$, we have either $\phi(\omega,b_1)\in\partial_r Y$ or $\phi(\omega,b_2)\in\partial_r Y$. We conclude then that the image of $\phi$ is essentially in $\partial_r Y$.
    \end{proof}

    \begin{prop}
        Any $\alpha$-equivariant map $\psi:\Omega\times B\times B\rightarrow\partial_r Y$ is of the form $\psi(\omega,b_1,b_2)=\phi(\omega,b_1)$ for almost all $(\omega,b_1,b_2)\in \Omega\times B\times B$ or $\psi(\omega,b_2)$ for almost all $(\omega,b_1,b_2)\in \Omega\times B\times B$.
    \end{prop}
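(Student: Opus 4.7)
The plan is to mirror the argument of Theorem \ref{thm_unicity_of_boundary_map_product} in the cocycle setting, using the median-point construction together with Lemma \ref{lem_metric_ergodicity_cocycle}. First, I form the $\alpha$-equivariant measurable map
\[
\Psi: \Omega \times B \times B \to \bar Y, \qquad (\omega, b_1, b_2) \mapsto m\bigl(\phi(\omega, b_1),\, \phi(\omega, b_2),\, \psi(\omega, b_1, b_2)\bigr).
\]
By Lemma \ref{lem_interval_between_strongly_separated_ultrafilters}, $\Psi(\omega, b_1, b_2)$ lies in $Y$ precisely when the three regular boundary points $\phi(\omega, b_1)$, $\phi(\omega, b_2)$, $\psi(\omega, b_1, b_2)$ are pairwise distinct, whereas if two of them coincide then $\Psi$ takes that repeated value. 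Adapting the doubling argument from the previous proposition (in the spirit of Theorem \ref{thm_image_in_regular_boundary}), one also verifies that $\phi(\omega, b_1) \neq \phi(\omega, b_2)$ for almost every $(\omega, b_1, b_2)$.

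Next I would establish that $G$ acts ergodically on $\Omega \times B \times B$ by applying Lemma \ref{lem_metric_ergodicity_cocycle} to the trivial cocycle and to the indicator of a $G$-invariant measurable subset of $\Omega \times B \times B$: the lemma produces a $G$-invariant measurable function $\Omega \to \{0,1\}$, which is essentially constant by ergodicity of $G \curvearrowright \Omega$. Hence the $G$-invariant set $\Psi^{-1}(Y)$ has measure $0$ or full measure. Suppose for contradiction that it has full measure; then $\Psi$ is an $\alpha$-equivariant section $\Omega \times B \times B \to Y$ into a separable metric space, and another application of Lemma \ref{lem_metric_ergodicity_cocycle}, now with the isometrically ergodic factor $B \times B$ in the role of $\Theta$, yields an $\alpha$-equivariant section $\hat\Psi: \Omega \to Y$.

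The main obstacle is ruling out such a $\hat\Psi$ from the non-elementarity of $\alpha$. My plan is: pick $y_0 \in Y$ and $R > 0$ so that $A := \hat\Psi^{-1}(B(y_0, R))$ has positive measure; use the Separation Theorem to select a thick halfspace $\hh \subseteq Y$ with $B(y_0, R) \subseteq \hh$; and apply Definition \ref{defi_non_elementary_practical_defi} to obtain $g \in G$ and $\omega \in A \cap g^{-1} A$ with $\alpha(g, \omega).\hh \subseteq \hh^c$. Then $\hat\Psi(\omega) \in \hh$ forces $\hat\Psi(g\omega) = \alpha(g, \omega).\hat\Psi(\omega) \in \hh^c$, contradicting $g\omega \in A$ and hence $\hat\Psi(g\omega) \in \hh$. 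Therefore $\Psi^{-1}(Y)$ is null, which combined with $\phi(\omega, b_1) \neq \phi(\omega, b_2)$ almost everywhere forces $\psi(\omega, b_1, b_2) \in \{\phi(\omega, b_1), \phi(\omega, b_2)\}$ almost everywhere. The two $G$-invariant sets $E = \{\psi = \phi(\omega, b_1)\}$ and $F = \{\psi = \phi(\omega, b_2)\}$ are essentially disjoint and cover $\Omega \times B \times B$ up to a null set; by the ergodicity established above, exactly one of them has full measure, giving the claimed dichotomy.
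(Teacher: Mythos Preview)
Your proposal is correct and follows the same approach as the paper: form the median map $\Psi(\omega,b_1,b_2)=m(\phi(\omega,b_1),\phi(\omega,b_2),\psi(\omega,b_1,b_2))$ and combine Lemma~\ref{lem_interval_between_strongly_separated_ultrafilters} with Lemma~\ref{lem_metric_ergodicity_cocycle}. The paper's proof is a two-line reference to those lemmas; you have unpacked the argument, in particular supplying the step the paper leaves implicit, namely that an $\alpha$-equivariant section $\hat\Psi:\Omega\to Y$ is forbidden by non-elementarity. One small caution on that step: Definition~\ref{defi_non_elementary_practical_defi} yields a \emph{single} pair $(g,\omega)$, while the equivariance of $\hat\Psi$ is only almost-everywhere, so as written you are using pointwise equivariance at a possibly exceptional point; this is the same level of rigor the paper works at (compare the use of non-elementarity in Lemma~\ref{lem_Fubini2}) and is handled in the usual way by passing to a strict cocycle or noting that the flipping pair can be found after discarding any prescribed null set from $A$.
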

    \begin{proof}
        Let us consider an $\alpha$-equivariant map  $\psi:\Omega\times B\times B\rightarrow\partial_r Y$   and set 
        \begin{eqnarray*}
            \Psi: \Omega\times B\times B &\rightarrow& \bar Y\\
            \ \ \ (\omega,b_1,b_2)&\mapsto& m(\phi(\omega,b_1),\phi(\omega,b_2),\psi(\omega,b_1,b_2))
        \end{eqnarray*}
        The result follows directly from Lemma \ref{lem_interval_between_strongly_separated_ultrafilters}, Proposition \ref{prop_no_equivariant_section} and  Lemma \ref{lem_metric_ergodicity_cocycle}.
    \end{proof}
Hence, one gets the cocycle analogue of Corollary \ref{cor_boundary_map_into_product}.
    \begin{cor}\label{cor_boundary_map_into_product_cocycle}
Any $\alpha$-equivariant measurable map from $\psi: \Omega\times B\times B\rightarrow (\partial_r Y)^2$ is of the following form $\upsilon$-almost everywhere: 
\begin{enumerate}
 \item $\psi(\omega,b_1,b_2)=(\phi(\omega,b_1),\phi(\omega,b_2))$ (or $\psi(\omega,b_1,b_2)=(\phi(\omega,b_2),\phi(\omega,b_1))$).
 \item $\psi(\omega,b_1,b_2)=(\phi(\omega,b_i),\phi(\omega,b_i))$ where $i\in\{1,2\}$ is fixed.
\end{enumerate}
\end{cor}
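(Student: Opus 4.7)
The plan is to reduce this corollary directly to the preceding proposition by treating each coordinate of $\psi$ separately. Since the $Isom(Y)$-action on $(\partial_r Y)^2$ is by definition the diagonal action, the two coordinate projections $p_1, p_2 : (\partial_r Y)^2 \to \partial_r Y$ are $Isom(Y)$-equivariant. Hence, given an $\alpha$-equivariant measurable map $\psi : \Omega \times B \times B \to (\partial_r Y)^2$, the two component maps $\psi_i := p_i \circ \psi$ are themselves $\alpha$-equivariant measurable maps from $\Omega \times B \times B$ into $\partial_r Y$, because $\alpha(g,\omega)$ acts on the pair by applying the same isometry to each entry, so the projections intertwine the two $\alpha$-actions.

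Applying the preceding proposition to $\psi_1$ and to $\psi_2$ separately, each $\psi_i$ agrees $\upsilon$-almost everywhere with either $(\omega,b_1,b_2)\mapsto \phi(\omega,b_1)$ or $(\omega,b_1,b_2)\mapsto \phi(\omega,b_2)$. This is an \emph{all-or-nothing} dichotomy for the whole map (the preceding proposition identifies $\psi_i$ globally up to a null set), so only four combined outcomes are possible. Two of them are diagonal: $\psi = (\phi(\omega,b_1), \phi(\omega,b_1))$ and $\psi = (\phi(\omega,b_2), \phi(\omega,b_2))$, giving case (2). The other two are off-diagonal: $\psi = (\phi(\omega,b_1), \phi(\omega,b_2))$ and $\psi = (\phi(\omega,b_2), \phi(\omega,b_1))$, which are precisely the two variants in case (1).

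There is essentially no obstacle in this reduction. The only points to verify are that post-composition with the equivariant projections $p_i$ preserves $\alpha$-equivariance (immediate from the diagonality of the action) and that measurability is preserved (automatic). With these observations the corollary follows at once from the proposition, exactly in parallel to the way Corollary \ref{cor_boundary_map_into_product} is derived from Theorem \ref{thm_unicity_of_boundary_map_product} in the homomorphism setting.
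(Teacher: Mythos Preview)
Your argument is correct and is exactly the intended derivation: the paper states this corollary without proof, as the direct analogue of Corollary~\ref{cor_boundary_map_into_product}, and the projection-to-coordinates reduction you give is precisely how it follows from the preceding proposition. There is nothing to add.
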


\subsection{Superrigidity}
In this section, we restrict ourself to the case where $G$ is the product of two locally compact second countable groups that are not compact.
\begin{prop}\label{prop_boundary_map_factors}
    Let $G:=G_1\times G_2$ where $G_1$ and $G_2$ are locally compact, second countable, non compact groups. Then the boundary maps $\phi$ factors through the projection onto $\Omega\times B_i$ for one of the $B_i$'s.
\end{prop}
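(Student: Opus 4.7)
The plan is to follow the template of Theorem \ref{thm_factor_boundary_map}, suitably adapted to the cocycle setting and exploiting the product structure $B = B_1 \times B_2$. The first step will be to define a measurable map $\eta : \Omega \times B \times B \to (\partial_r Y)^2$ by
\[
\eta(\omega, (x_1, y_1), (x_2, y_2)) = \bigl(\phi(\omega, x_1, y_1),\, \phi(\omega, x_1, y_2)\bigr),
\]
i.e.\ keeping the $B_1$-coordinate of the first argument and pulling in the $B_2$-coordinate of the second. Since $g = (g_1, g_2) \in G$ acts on $B = B_1 \times B_2$ factor by factor, the pairs $(x_1, y_1)$ and $(x_1, y_2)$ transform consistently under $g$, so the $\alpha$-equivariance of $\phi$ translates immediately into $\alpha$-equivariance of $\eta$.

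Next I would apply Corollary \ref{cor_boundary_map_into_product_cocycle} to $\eta$, which gives four possible forms. In the form $\eta(\omega, b_1, b_2) = (\phi(\omega, b_1), \phi(\omega, b_2))$, reading off the second coordinate yields $\phi(\omega, x_1, y_2) = \phi(\omega, x_2, y_2)$ for almost every $(\omega, x_1, x_2, y_2)$, so $\phi$ is essentially independent of its $B_1$-coordinate and factors through $\Omega \times B_2$. In the form $\eta(\omega, b_1, b_2) = (\phi(\omega, b_1), \phi(\omega, b_1))$ (case (2) of the corollary with $i = 1$), the second coordinate gives $\phi(\omega, x_1, y_2) = \phi(\omega, x_1, y_1)$, so $\phi$ factors through $\Omega \times B_1$. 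These two outcomes produce the desired factorization.

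The main obstacle will be ruling out the remaining two cases (the swap in the first form, and case (2) with $i = 2$), both of which force $\phi(\omega, x_1, y_1) = \phi(\omega, x_2, y_2)$ almost everywhere and so reduce $\phi$ to an $\alpha$-equivariant measurable section $\bar\phi : \Omega \to \partial_r Y$. To dismiss this degeneracy I plan to invoke the non-elementarity of $\alpha$. Since the essential image of $\bar\phi$ is contained in $\partial_r Y$, which is covered by $\tilde{\hh}$ as $\hh$ ranges over the thick halfspaces of a countable fundamental family, one obtains a thick halfspace $\hh \in \HH(Y)$ with $A := \bar\phi^{-1}(\tilde{\hh})$ of positive measure. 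Non-elementarity then produces $g \in G$ and $\omega \in A \cap g^{-1}A$ with $\alpha(g, \omega)\hh \subseteq \hh^c$, and $\alpha$-equivariance of $\bar\phi$ yields
\[
\bar\phi(g\omega) = \alpha(g, \omega)\bar\phi(\omega) \in \alpha(g, \omega)\tilde{\hh} \subseteq \tilde{\hh}^c,
\]
contradicting $g\omega \in A$. Hence only the non-degenerate cases survive, and $\phi$ must factor through $\Omega \times B_i$ for some $i \in \{1, 2\}$.
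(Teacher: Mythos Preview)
Your proof is correct and follows essentially the same route as the paper: the same auxiliary map $\eta$, the same appeal to Corollary \ref{cor_boundary_map_into_product_cocycle}, and the same case analysis. The paper is terser in the final step, simply asserting that the degenerate case ``$\phi(\omega,x,y)=\phi(\omega,x',y')$ cannot happen under the assumption that the cocycle is not elementary''; you spell out the contradiction via a thick halfspace $\hh$ with $\bar\phi^{-1}(\tilde\hh)$ of positive measure and Definition \ref{defi_non_elementary_practical_defi}, which is a welcome addition.
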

\begin{proof}
    Let us consider the following map 
    \begin{eqnarray*}
    \eta:\Omega\times B_1\times B_2 \times B_1 \times B_2 &\rightarrow & (\partial_r Y)^2\\
\ \ \ \ (\omega,x,y,x',y') &\mapsto & (\phi(\omega,x,y),\phi(\omega,x,y'))
    \end{eqnarray*}
    Corollary \ref{cor_boundary_map_into_product_cocycle} states that we have three cases:
    \begin{enumerate}
        \item $\phi(\omega,x,y')=\phi(\omega,x',y')$ almost everywhere, implying that $\phi$ does not depends on the $y$ variable.
        \item $\phi(\omega,x,y')=\phi(\omega,x,y)$ almost everywhere, which implies that $\phi$ does not depend on the $x$ variable.
    \end{enumerate}
    The case where $\phi(\omega,x,y)=\phi(\omega,x',y')$ cannot happen under the assumption that the cocycle is not elementary.
\end{proof}

\begin{thm}\label{theoerem_superrigidity_cocycle}
    There exists a group homomorphism $h:G\rightarrow Isom(Y)$ that factors through one of the $G_i$'s, and $h$ is cohomologous to the cocycle $\alpha$
\end{thm}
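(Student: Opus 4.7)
My strategy follows the outline sketched in the introduction, building on the boundary map $\phi:\Omega\times B\to\partial_r Y$ constructed in Subsection \ref{subsection_boundary_map_cocycle} which, by Proposition \ref{prop_boundary_map_factors}, factors through $\Omega\times B_1$ (after relabeling). For each $b_1\in B_1$, the section $s_{b_1}:\omega\mapsto\phi(\omega,b_1)$ is $\alpha_{G_2}$-equivariant because $G_2$ acts trivially on $B_1$ while $\phi$ is $\alpha$-equivariant. I form the median subspace $\YY$ generated by $\{s_{b_1}\}_{b_1\in B_1}$ inside the (pointwise) median algebra of measurable sections $\Omega\to\bar Y$. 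By Lemma \ref{lem_interval_between_strongly_separated_ultrafilters}, the median of three pairwise distinct regular sections already takes values in $Y$ almost everywhere, so $\YY$ consists of $\alpha_{G_2}$-equivariant measurable sections $\Omega\to Y$.

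Next I equip $\YY$ with a metric. For $s_1,s_2\in\YY$, the function $\omega\mapsto d_Y(s_1(\omega),s_2(\omega))$ is $G_2$-invariant, because $\alpha_{G_2}$-equivariance coupled with $Isom(Y)$ preserving $d_Y$ gives this; by ergodicity of $G_2\curvearrowright\Omega$ it is essentially constant. Define $d_{\YY}(s_1,s_2)$ as this value; vanishing forces $s_1=s_2$ as classes, so $d_{\YY}$ is a genuine median metric. The twisted cocycle $G$-action on ambient sections restricts to $\YY$ and acts by isometries of $d_{\YY}$, while $G_2$ acts trivially since elements of $\YY$ are $\alpha_{G_2}$-equivariant. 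Hence the action descends to a homomorphism $\rho:G_1\to Isom(\YY)$. Absorbing the metric completion into $\YY$ yields a complete median space with an isometric $G_1$-action.

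The crucial step, which I expect to be the main obstacle, is identifying $\YY$ isometrically with $Y$. For a.e.\ $\omega_0\in\Omega$, I claim that the evaluation $\mathrm{ev}_{\omega_0}:\YY\to Y$, $s\mapsto s(\omega_0)$, is an isometric bijection. Injectivity and the isometry property are immediate from the definition of $d_{\YY}$: if $s_1(\omega_0)=s_2(\omega_0)$ then $d_{\YY}(s_1,s_2)=0$ on a positive measure set. Surjectivity is the delicate point, and proceeds in analogy with the proof that $Y_0^\phi=Y_0$ from Section \ref{section_superrigidity_of_homomorphisms}: for any $x=m(\tilde x_1,\tilde x_2,\tilde x_3)\in Y_0$ coming from three strongly separated thick halfspaces $\hh_1,\hh_2,\hh_3$, the cocycle version of Lemma \ref{lem_Fubini2} combined with the barycenter characterization of $\phi$ produces, for a.e.\ $\omega$, points $b_1^{(1)},b_1^{(2)},b_1^{(3)}\in B_1$ with $\phi(\omega,b_1^{(i)})\in\tilde\hh_i\cap\partial_r Y$. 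The section $\omega'\mapsto m(\phi(\omega',b_1^{(1)}),\phi(\omega',b_1^{(2)}),\phi(\omega',b_1^{(3)}))\in\YY$ then evaluates to $x$ at $\omega$. Running over a countable family of such triples that is dense in $Y_0$, and using completeness of $Y$ together with the isometry property, yields density (hence equality) of the image of $\mathrm{ev}_\omega$ in $Y$. The awkward point is that the choices $b_1^{(i)}$ depend on $\omega$, so one must organize the selections measurably over a countable family of triples and then promote the resulting positive measure surjectivity set to full measure using ergodicity of $G_1\curvearrowright\Omega$.

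Finally, I extract the cohomology. Fix $\omega_0$ in the full-measure set where $\mathrm{ev}_{\omega_0}:\YY\to Y$ is an isometric bijection, and set $f(\omega):=\mathrm{ev}_\omega\circ\mathrm{ev}_{\omega_0}^{-1}\in Isom(Y)$, which depends measurably on $\omega$, together with $h(g):=\mathrm{ev}_{\omega_0}\circ\rho(g)\circ\mathrm{ev}_{\omega_0}^{-1}\in Isom(Y)$, which factors through $G_1$ since $G_2$ acts trivially on $\YY$. The identity $(g\cdot s)(g\omega)=\alpha(g,\omega)\,s(\omega)$ defining the twisted action translates into $\mathrm{ev}_{g\omega}\circ\rho(g)=\alpha(g,\omega)\circ\mathrm{ev}_\omega$, which rearranges to
\[
\alpha(g,\omega)=f(g\omega)\,h(g)\,f(\omega)^{-1},
\]
establishing that $\alpha$ is cohomologous to the homomorphism $h:G\to Isom(Y)$ factoring through $G_1$.
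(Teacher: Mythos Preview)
Your strategy coincides with the paper's: build $\alpha_{G_2}$-equivariant sections from the factored boundary map, assemble them into a median space $\YY$, show that evaluation at a generic $\omega$ identifies $\YY$ with (a dense subspace of) $Y$, and read off the cohomology $\alpha(g,\omega)=f(g\omega)\,h(g)\,f(\omega)^{-1}$. The final cohomology computation you wrote is exactly what the paper obtains.

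The gap is precisely where you flagged it. To prove that $\mathrm{ev}_\omega$ is surjective for a.e.\ $\omega$, the paper does \emph{not} organize the choices $b_1^{(i)}$ measurably in $\omega$, nor does it appeal to $G_1$-ergodicity. Instead it works with the countable $Isom(X)$-invariant median subalgebra $Y'\subseteq Y$ of Theorem~\ref{thm_countable_medians_subspace} and takes $\YY$ to be the set of \emph{all} classes of $\alpha_{G_2}$-equivariant sections $\Omega\to Y'$. Because $G_2\curvearrowright\Omega$ is ergodic and the target $Y'$ is countable, the paper asserts that $\YY$ itself is countable and fixes an enumeration $(f_i)_{i\in\NN}$. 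Then for each $y\in Y'$ the set $A_y=\bigcup_i f_i^{-1}(y)$ is a \emph{countable} union, hence measurable, and the argument you already sketched (Lemma~\ref{lem_Fubini2} plus strongly separated halfspaces) shows $A_y$ meets every positive-measure subset of $\Omega$, so $A_y$ is conull. Finally $\Omega'=\bigcap_{y\in Y'}A_y$ is a countable intersection of conull sets, hence conull, and on $\Omega'$ every evaluation is surjective onto $Y'$.

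Your route via $G_1$-ergodicity does not sidestep this. Even granting that the set of $\omega$ for which $\mathrm{ev}_\omega$ is onto is $G_1$-invariant, you still need it to be \emph{measurable} and of \emph{positive} measure. Measurability fails if $\YY$ is taken as the median algebra generated by uncountably many $s_{b_1}$, since $\{\omega:\exists s\in\YY,\ s(\omega)=y\}$ is then an uncountable union. And positive measure requires a single $\omega$ at which countably many target points are hit simultaneously, which is exactly the countable-intersection step above. The missing ingredient is Theorem~\ref{thm_countable_medians_subspace}: pass to the countable $Y'$, so that $\YY$ becomes countable and the measure-theoretic bookkeeping goes through.
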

\begin{proof}
Let us denote by $Y_0\subseteq Y$ the $\alpha$-invariant subset obtained by taking the median points of distinct triple in $\partial_r X\subseteq \bar Y$ and let us denote by $Y'$ its median hull inside $X$ as in Definition \ref{def_median_core}. The median subspace $Y'$ is countable and $\alpha$-invariant (see Theorem \ref{thm_countable_medians_subspace}). \par 
 By Proposition \ref{prop_boundary_map_factors}, the boundary map $\phi$ factors through $\Omega\times B_i$ for one of the $B_i$'s. Let us assume that it is $\Omega\times B_1$. Then almost all fixed $b_1$ gives rise to an $\alpha_{G_2}$ equivariant section, where $\alpha_{G_2} = \alpha|_{G_2}$ is the cocycle on $G_2$ obtained as the restriction of $\alpha$ to $\{e_{G_1}\}\times G_2$. We use the latter familly to construct infinitely many $\alpha_{G_2}$-equivariant section from. Let $\YY$ be the set of equivalence classes of measurable $\alpha_{G_2}$-invariant sections from $\Omega$ to $Y'$, where two measurable section are equivalent if they coincide in a subset of total measure.\par 
 \textit{\textbf{Claim 1:} The set $\YY$ is not empty. }\par 
 Let us fix $y\in Y_0$ and consider $\tilde{x}_1,\tilde{x}_2,\tilde{x}_3\in\partial_r X$ such that $y=m(\tilde{x}_1,\tilde{x}_2,\tilde{x}_3)$. There exists then three strongly separated thick halfspaces $\hh_1,\hh_2,\hh_3\in\HH(X)$ such that $\tilde{x}_i\in\tilde{\hh}_i$ and for any $\tilde{x}_i'\in\tilde{\hh}_i$ we have $m(\tilde{x}_1,\tilde{x}_2,\tilde{x}_3)=y$. Then for any measurable subset $A\subseteq \Omega$ of positive measure, Lemma \ref{lem_Fubini2} and Fubini's Theorem ensures the existence of a measurable subset $A'\subseteq A$ of positive measure  and $b_1,b_2,b_3\in B_1$ such that for any $\omega\in A'$, we have $\phi(\omega,b_i)\in\tilde{\hh}_i$. The measurable decomposition $m(\partial_r X):=\partial_r X\sqcup Y_0$ is $Isom(X)$-invariant. Hence, by the $\alpha_{G_2}$-equivariance of the sections and the ergodicity of the $G_2$ action on $\Omega$, we deduce that for almost all $b_1,b_2,b_3\in B$ we have $m(\phi(\omega,b_1),\phi(\omega,b_2),\phi(\omega,b_3))\in Y_0$, which gives a well defined class of $\alpha_{G_2}$-invariant measurable section from $\Omega$ to $Y_0$ which completes the first claim.  
 \par
 The action of $G_2$ on $\Omega$ is ergodic and the target space $Y'$ is countable, the set $\YY$ is also countable. Hence let us fix a choice of representative of each class of $\YY$ to obtains a sequence $(f_i)_{i\in\NN}$, that is $\YY:=\displaystyle{\bigcup_{i\in\NN}[f_i]}$, where $[f_i]$ is the equivalent class of $f_i$.\par 
  \textit{\textbf{Claim 2:} Almost everywhere, the evaluation map give rise to a surjection between $\YY$ and $Y'$.}\par
  In the proof of the first claim, we have shown that for any measurable subset $A\subseteq \Omega$ of positive measure and any $y\in Y'$, there exists a measurable subset of positive measure $A'\subseteq A$ and $f\in \YY$ such that $f(\omega)=y$ for any $\omega\in A'$ (note that we did it when $y$ lies in $Y_0$ but the same argument applies for any point in $Y'$ by iterating the median operator on the $\alpha_{G_2}$ invariant sections obtained in the previous step and taking value in $Y_0$). Hence, the measurable subset 
  \[
  A_y:=\bigcup_{i\in\NN}f_i^{-1}(y)
  \]
 is of total measure, as it is measurable and it intersects any measurable subset of positive measure in $\Omega$.\par 
 After setting $\Omega':=\displaystyle{\bigcap_{y\in Y'}A_y}$, we deduce that for each $\omega\in \Omega'$ the evaluation map $ev_\omega:\YY\rightarrow Y'$ is surjective.\par 
 By the ergodicity of the $G_2$-action on $\Omega$ and the equivariant of the sections $f_i's$, $\YY$ is naturally endowed with a median metric $d_\YY(f_i,f_j):=\int_{\Omega}d_Y(f_i(\omega),f_j(\omega))d\mu(\omega)$. \par 
 
\textit{\textbf{Claim 3:} Almost everywhere, the evaluation map give rise to an isometric identification between $\YY$ and $Y'$.}\par 
For any $i,j\in\NN$, the function $d(f_i(*),f_j(*))$ is $G_2$-invariant, hence it is constant by the metric ergodicity of the $G_2$-action on $\Omega$. Thus, there exists a subset $\Omega''\subseteq \Omega$ of total measure, such that $f_i(\omega)\neq f_j(\omega)$ for any $\omega\in\Omega''$ and any distinct $i,j\in\NN$ (the family $(f_i)_{i\in\NN}$ is assumed to be pairwise essentially distinct). Therefore, the evaluation map $ev_\omega:(\YY,d_\YY)\rightarrow (Y',d_Y)$ is an isometric bijection for every $\omega\in \Omega''$.\par 

After fixing a base point $\omega_0$ from which we identify $\YY$ with $Y'$, we have a measurable map 
\begin{eqnarray*}
    F:\Omega'' \times Y' &\rightarrow & Y'\\
\ \ \ \ (\omega,f_i(\omega_0)) &\mapsto & f_i(\omega)
    \end{eqnarray*}
Hence, each fixed $\omega\in\Omega''$ gives rise to an element in $Isom(Y')$. Thus, we have a measurable map $h:\Omega\rightarrow Isom(Y')\cong Isom(Y)$.\par 
Therefore, the $G_1$-isometric action on $\YY$ is cohomologous to the cocycle $\alpha$ under conjugation by $h$.
\end{proof}

\begin{rmk}
    Another way to go for the proof of Theorem \ref{theoerem_superrigidity_cocycle} is to first fix, as in the Proof of the theorem, a parametrization  $\YY:=\displaystyle{\bigcup_{i\in\NN}[f_i]}$ of the countable space of $\alpha_{G_2}$-equivariant sections from $\Omega$ to $Y_0$. Then one applies the reduction lemma to transform each $f_i$ into an essentially constant section. As $Y_0$ is countable, orbits are canonically separated by countably many measurable subsets of $Y_0$ (the measurable subsets being the orbits themselves). Hence, if $f$ is an  $\alpha_{G_2} $-equivariant section, by the ergodicity of $G_2$ action on $\Omega$, the image of the section lies essentially in one orbit. Thus, one obtains a $G_2$ equivariant map $\tilde{f}:\Omega\rightarrow Isom(X)/stab(y_f)$.  Hence, up to conjugating the cocycle by $h_f:\Omega\rightarrow Isom(X)$ that associates to each $ \omega$ a representative of the coset $\tilde{f}(\omega)$ (as in \cite[Example 4.2.18 (b)]{Zimmer}), the new cocycle has a constant equivariant section $\hat{f}(\omega)=y_f$ for almost all $\omega\in \Omega$. Hence, the new cocycle verifies 
    \[
    \tilde{\alpha}(g_2,\omega):=h_f^{-1}(g_2.\omega)\alpha(g_2,\omega)h(\omega)\in Stab(y_f)\ \ \text{for almost all}\ g_2\in G_2,\omega\in \Omega
    \]
   Apply this process for each $f_i$, where at each step, one conjugates the cocycle by a map $h_i:\Omega\rightarrow Stab(x_0\cup x_1\cup ..\cup x_i)$ such that $[f_i]$ is mapped into a constant section. One set $\hat{h}_i:=h_i\circ ..\circ 
    h_1\circ h_0$. As $\displaystyle{\bigcap_{i\in\NN} stab(x_0\cup x_1\cup ..\cup x_i)=\{\id_{Y_0}\}}$. Hence, the sequence $(\hat{h}_i)_{i\in\NN}$ is a Cauchy sequence. The space $Isom(X)$ being separable and completely metrizable, we deduce that the sequence converges to an element $h:\Omega\rightarrow Isom(Y_0)$ where $(h(g_2.\omega))^{-1}\alpha(g_2,\omega)h(\omega)=Id_Y$ for almost all $g_2\in G_2, \omega\in \Omega$. 
\end{rmk}

We now prove Theorem \ref{thm_elementary_cocycyle_product_group}. The proof follows basically the same line as in \cite[Theorem C]{Fior_superrigidity}.
\begin{proof}[Proof of Theorem \ref{thm_elementary_cocycyle_product_group}]
 By Theorem \ref{theoerem_superrigidity_cocycle}, the cocycle $\alpha$ is cohomologous to a continuous group homomorphism $h:G\rightarrow Isom(Y)$ that factors through one of the $G_i$'s, say $G_1$. By Proposition \ref{prop_elementarity_cocycle_vs_homomorphism}, it is enough to prove that the $G_1$-action induced from $h$ is Roller elementary. Let $Y'$ be as in Definition \ref{def_median_core}. By the third point of Remark \ref{remark_median_core}, the stabilizer of any point in $Y'$ is open and closed. Hence for any $y\in Y'$, we have $h(G_1^0)\subseteq Stab(y)$. Therefore, the $G_1^0$-action on $Y'$ is trivial which extends to a trivial action on the median core $Y$. The $G_1$ action factors through the quotient $G_1/G_1^0$. In the case where $G_1/G_1^0$ has Kazhdan's Property (T), the Roller elementarity of the action is deduced from \cite[Theorem 1.2]{ChaDH_median}  and \cite[Corollary 2.17]{Fior_tits}. If $G_1/G_1^0$ is amenable, then the Roller elementarity of the action is implied by \cite[Theorem E]{Fior_tits}, which completes the proof.
\end{proof}

\section{Cocycle superrigidity for Kazhdan groups}\label{section_Kazhdan_group}
In previous sections, we consider cocycles whose source groups are product groups; they include, for instance, products of simple Lie groups. However, they exclude simple Lie groups of higher rank. In this section, we complete the story by studying cocycles whose source groups have Kazhdan's Property (T). The reader is referred to \cite{BHV} for examples and other aspects of groups having Kazhdan's Property (T).

The following theorem is essentially included in the proof of \cite[Theorem 2.3]{AdamsSpatzier}. Recall that for a set $X$, a function $f: X \times X \to \mathbb{R}$ is called \textbf{negative definite} if for all $x_1,...,x_n$ in $X$ and all complex numbers $\lambda_1,...,\lambda_n$ with $\sum^n_{i=1}\lambda_i = 0$, $$\sum^n_{i,j =1 } \lambda_i\overline{\lambda_j}f(x_i,x_j) \leq 0.$$
\begin{thm}[\cite{AdamsSpatzier}]\label{thm_Adams-Spatzier}
    Let $G$ be a lcsc group having Kazhdan's Property (T) and $(\Omega,\nu)$ an ergodic $G$-space. Let $(X,d)$ be a metric space such that the distance function $d: X \times X \to \mathbb{R}$ is negative definite.  Let $\alpha: G \times \Omega \to Isom(X)$ be a measurable cocycle. Then there exists an $\alpha$-invariant field of bounded subsets.
\end{thm}\par 
We note that in \cite{AdamsSpatzier}, Theorem \ref{thm_Adams-Spatzier} is stated for discrete lattices but as pointed out by the authors therein, the same result holds for locally compact second countable groups.\par 
Here, an \textbf\textit{{$\alpha$-invariant field of (convex) bounded subsets}} is a measurable subset $M \subset X \times \Omega$ such that if one sets $M_{\omega} = \{ x \in X| (\omega, x) \in M\}$, then we have 
\begin{itemize}
    \item [(1)]$\alpha(g,\omega)F_{\omega} = F_{g.\omega}$ holds for all $g \in G$ and almost all $\omega \in \Omega$, and
    \item [(2)] For almost all $\omega \in \Omega$, $F_{\omega} \subset X$ is a (convex) bounded subset. 
\end{itemize}
The following lemma is included in \cite{nica2008groupactionsmedianspaces} and \cite{ChaDH_median}, for CAT(0) cube complexes, it is included in \cite{Niblo-Reeves}.
\begin{lem}
    If $(X,d)$ is a metric median space, then the distance function $d: X \times X \to \mathbb{R}$ is negative definite.
\end{lem}
Combining with the previous theorem, we have
\begin{thm}\label{thm:cocycle-propertyT}
    Let $G$ be a lcsc group having Kazhdan's Property (T) and $(\Omega,\nu)$ an ergodic $G$-space. Let $(X,d)$ be a median space and $\alpha: G \times \Omega \to Isom(X)$ a measurable cocycle. Then there exists an $\alpha$-invariant field of bounded subsets. 
\end{thm}

This theorem, together with the same argument for \cite[Theorem 1.1]{AdamsSpatzier} and the fact that any finite subcomplex can be completed to be a sub-cube complex, shows the following.

\begin{cor}\label{cor:cocycle_cube_propertyT}
   Let $G$ be a lcsc group having Kazhdan's Property (T) and $(\Omega,\nu)$ an ergodic $G$-space. Let $X$ be a CAT(0) cube complex and $H \leq Isom(X)$. Then any measurable cocycle $\alpha: G \times \Omega \to H$ is cohomologous to a cocycle in the isotropy group in $H$ of a bounded sub-cube complex of $X$.
\end{cor}

In the case where $X$ is of finite rank, we prove then the following fixed point property in the cocycle setting under the assumption that $X$ is connected.
\begin{thm}\label{thm_equivariant_cocycle_Kazhdan}
     Let $X$ be a complete separable connected median space of finite rank and let $G$ be a lcsc group with Kazhdan's Property (T) that acts ergodically by measure preserving action on $\Omega$. Then for any cocycle $\alpha:G\times\Omega\rightarrow Isom(X)$ there exists an $\alpha$-equivariant section $f:\Omega\rightarrow X$. \par 
     In particular, $\alpha$ is elementary.
\end{thm}
The idea of the proof is to start from the $\alpha$-invariant measurable field of bounded subsets of $X$ given by Theorem \ref{thm:cocycle-propertyT}. Then one associates to each fiber, which is a bounded subset, its centroid in the same vein as the barycenter map given in Section \ref{section_barycenter_map}, to obtain an $\alpha$-equivariant section. What is left then is to show its measurability.\par
Before proving Theorem \ref{thm_equivariant_cocycle_Kazhdan}, we first describe the process to obtain the centroid. \par
Let $X$ be a complete finite rank median space and let $K\subseteq X$ be a bounded subset. For any halfspace $\hh\in\HH(X)$ define the \textbf{\textit{depth}} of $\hh$ in $K$ to be 
\[
depth_K(\hh):=\sup_{x\in K}{(d(x,\hh^c))}.
\]
For any closed convex subset $K$, consider the following subset of halfspaces 
\[
\HH^+_K:=\{\hh\in\HH(X)\ | \ depth_K(\hh)\geq depth_K(\hh^c)\}.
\]
Remark that as $X$ is connected, if $\hh_1,\hh_2\in\HH_K^+$ are disjoint, then we necessarily have $\bar\hh_1\cap\bar\hh_2\neq\emptyset$, where $\bar{\hh}$ is the closure of $\hh$ in $X$.\par
We define the \textbf{\textit{centroid}} of $C$ to be 
\[
c(K):=\bigcap_{\hh\in\HH_K^+}\bar \hh.
\]
Remark that any $\hh\in \HH_K^+$ intersects the closure of the convex hull of $K$, which is bounded and closed. Hence, by the compactness of $\bar X$ and Helly's property, we deduce that $c(K)$ is not empty. Moreover it is a singleton as $X$ is connected, any pair of points is separated by a halfspace that has different depth than its complement in $K$. Note that for every $f\in Isom(X)$, we have $c(f.K)=f(c(K))$.\par 
\begin{proof}[Proof of Theorem \ref{thm_equivariant_cocycle_Kazhdan}]
    Let $M\subseteq \Omega\times X$ be a measurable field of bounded subsets obtained from Theorem \ref{thm:cocycle-propertyT}. Let $\Omega_0\subseteq \Omega$ be the subset of total measure such that, for any $\omega\in\Omega_0$, the fiber $M_\omega:=\{x\in X\ |\ (\omega,x)\in M\}  $ is a bounded subset. Note that up to taking an $\varepsilon$ neighborhood of each fiber $M_\omega$, we can assume that its interior $int(M_\omega)$ is dense in it. \par 
    By considering the centroid of each fiber $M_\omega$ for $\omega\in \Omega_0$, one obtains an $\alpha$-equivariant section $f:\Omega\rightarrow X$. It is left to verify that it is measurable.\par 
    To show that $f$ is measurable, it is enough to show that its graph is a measurable subset of $\Omega\times X$ (see \cite[Theorem A5]{Zimmer}). We will show that the graph of $f$ is obtained as a countable intersection of measurable subset of $\Omega\times X$. To do so, we apply the same process in the definition of centroid, but with a fundamental family of halfspaces and we do it uniformly with regard to the index space $\Omega$, to obtain at each step, a measurable subset of $M$.\par 
    As $X$ is separable, let us fix a countable dense subset $Y\subseteq X$ and  a fundamental family $\HH'\subseteq \HH(X)$. Set $A:=M\cap(\Omega\times Y)$ and consider a countable family of measurable section $\phi_n:\Omega\rightarrow X$ such that such that the union of their graphs is equal to $A$, up to a null set. The existence of such family is ensured by the Lusin-Novikov Theorem.\par 
    For each $\hh\in\HH'$, consider the following measurable function
    \begin{eqnarray*}
        d_\hh:\Omega&\rightarrow&\RR\\ 
        \omega &\mapsto& \sup_{n\in\NN}(d(\phi_n(\omega),\hh^c))
    \end{eqnarray*}
    and set $\Omega_\hh:=(d_\hh-d_{\hh^c})^{-1}([0,+\infty[)$. Remark that for any $\omega\in\Omega_\hh$, the centroid $c(M_\omega)$ of the fiber $M_\omega$ lies in $\hh$, as we have 
   \[
   \sup_{x\in M_\omega}(d(x,\hh^c))=\sup_{n\in\NN}(d(\phi_n(\omega),\hh^c))\geq \sup_{n\in\NN}(d(\phi_n(\omega),\hh))=\sup_{x\in M_\omega}(d(x,\hh))
   \]
    implying that $\hh\in \HH_{M_\omega}^+$.
    Hence, the graph of the centroid map $f$ lies essentially in $(\Omega_\hh\times\hh)\cup(\Omega_{\hh^c}\times\hh^c)$. \par 
    Therefore, we conclude that up to zero measure set of $\Omega\times X$, we have
    \[
    f(\Omega)\subseteq\bigcap_{\hh\in\HH'} (\Omega_\hh\times\hh)\cup(\Omega_{\hh^c}\times\hh^c).
    \]
  
    By the density of the fundamental family, we have 
   \[
c(K):=\bigcap_{\hh\in\HH_K^+}\bar \hh=\bigcap_{\hh\in\HH_K^+\cap \HH'}\bar \hh.
\]
Therefore, we get 
\[
    f(\Omega)=\bigcap_{\hh\in\HH'} (\Omega_\hh\times\hh)\cup(\Omega_{\hh^c}\times\hh^c)
    \]
    which completes the proof.
    \end{proof}

\newpage
\bibliographystyle{plain}
\bibliography{bibli}

\bigskip
\noindent


\end{document}